\documentclass[11pt]{article}
\usepackage{amsmath, amssymb, amscd, amsthm, amsfonts,mathrsfs}
\usepackage{graphicx}
\usepackage{hyperref}
\usepackage{authblk}
\usepackage{refcount}
\usepackage{float}
\usepackage{booktabs}
\usepackage{enumitem}

\newtheorem{theorem}{Theorem}
\newtheorem{corollary}{Corollary}
\newtheorem{definition}{Definition}
\newtheorem{lemma}{Lemma}
\newtheorem{remark}{Remark}
\newtheorem{example}{Example}

\title{Peakon solutions and analytical properties for the Camassa-Holm type equations with quadratic nonlinearities}
\author{Yonghong Chen}
\author{Zhijun Qiao\thanks{Corresponding authors \  Emails: zhijun.qiao@utrgv.edu \  mxzhu@qfnu.edu.cn}}
\author[2]{Mingxuan Zhu$^\ast$}
\affil[1]{School of Mathematical and Statistical Sciences, University of Texas Rio Grande Valley, Edinburg,  TX 78539,USA}
\affil[2]{Department of Mathematics, Qufu Normal University, Qufu, 273165, P. R. China}

\date{}

\begin{document}

\maketitle

\begin{abstract}
In this paper, we derive the multi-peakon dynamical system of a class of Camassa-Holm-type equations with quadratic
nonlinearities. We also consider the analytical properties for the Cauchy problem. Firstly, we establish local well-posedness of solutions in Besov spaces and then 
provide the blow-up criteria. Subsequently, we impose appropriate sufficient conditions on the initial data to guaranty that the corresponding solution either exists globally or blows up in a finite time. Finally, we prove the ill-posedness in the Besov space $B_{2,\infty}^{3/2}$ by utilizing the non-traveling wave solutions.
\end{abstract}

\section{Introduction}

A class of fundamental equations in the study of nonlinear waves is characterized by the following form:
\begin{equation}
u_t + P(u, u_x) + (1-\partial_x^2)^{-1}Q(u,u_x) + \partial_x(1-\partial_x^2)^{-1}R(u,u_x) = 0,
\end{equation}
where $P, Q,$ and $R$ are bivariate polynomials of degree $n$, and $(1-\partial_x^2)^{-1}$ denotes the convolution with the Helmholtz kernel $G(x) = \frac{1}{2}e^{-|x|}$ on $L^2(\mathbb{R})$. Representative examples include:

\begin{itemize}
    \item \textbf{Camassa-Holm equation:}
    \begin{equation}
    \label{Camassa-Holm}
            u_t + u u_x + \partial_x (1-\partial_x^2)^{-1} \left( u^2 + \frac{1}{2} u_x^2 \right) = 0
    \end{equation}
    \item \textbf{Degasperis–Procesi equation:}
    \begin{equation}
    u_t + u u_x + \partial_x (1-\partial_x^2)^{-1} \left( \frac{3}{2}u^2 \right) = 0
    \end{equation}
    \item \textbf{Novikov equation:}
    \begin{equation}
    u_t + u^2 u_x + \partial_x (1-\partial_x^2)^{-1} \left( u^3 + \frac{3}{2} u u_x^2 \right) + (1-\partial_x^2)^{-1} \left( \frac{1}{2} u_x^3 \right) = 0 
    \end{equation}
    \item \textbf{Fokas-Olver-Rosenau-Qiao equation:}
    \begin{equation}
    u_t + u^2 u_x - \frac{1}{3} u_x^3 + \partial_x (1-\partial_x^2)^{-1} \left( \frac{2}{3} u^3 + u u_x^2 \right) + (1-\partial_x^2)^{-1} \left( \frac{1}{3} u_x^3 \right) = 0
    \end{equation}
\end{itemize}

A common feature of these models is their admission of peakon solutions. Formally, any function of the form 
\begin{equation*}
u(x,t) = p(t)e^{-|x-q(t)|}
\end{equation*}
can be regarded as a candidate peakon. To the best of our knowledge, waves similar to peakons first appeared in the context of the Fornberg--Whitham equation \cite{fornberg1978numerical}:
\begin{equation}
u_t + \frac{3}{2}u u_x + \partial_x (1-\partial_x^2)^{-1} u = 0.
\end{equation}
This equation admits the traveling wave solution
\begin{equation}
u(x,t) = \frac{8}{9} \, e^{-\frac{1}{2} \, |x - \frac{4}{3} t|}.
\label{peakon1}
\end{equation}

The Fornberg--Whitham equation also exhibits the phenomenon of wave breaking \cite{haziot2017wave}, in a manner similar to the Camassa--Holm equation. However, in contrast to Camassa--Holm type equations, which possess a whole family of peakon solutions, the solution \eqref{peakon1} is essentially the unique peakon of the Fornberg--Whitham equation. This distinction suggests, at least heuristically, that the richness of peakon structures may be closely related to the integrability of the underlying equation.

The Camassa–Holm (CH) equation \cite{camassa1993integrable} was derived by Camassa and Holm in the study of shallow water wave regimes. They established its physical relevance, gave the Lax pair to guaranty the integrability of the equation, and showed that the CH equation admits peakon solutions, the most breakthrough achievement in soliton theory. Later in 1995 Fokas \cite{fokas1995physicad} generalized the CH model to a large class of physically important integrable equations.    
Based on those foundational results, Qiao \cite{qiao2003camassa} further developed the Camassa--Holm hierarchy, investigated its integrable extensions of dimensions $N$ and constructed algebro-geometric solutions on a symplectic submanifold.

In particular, the Camassa–Holm equation is renowned for exhibiting fascinating phenomena: wave breaking, blow-up, and peakon stability. These phenomena play crucial roles in both the physical interpretation and the mathematical structure of the equation.

\begin{enumerate}

    \item \textbf{Wave Breaking and Blow-up}: 
    One of the features of the Camassa–Holm equation is its ability to model wave breaking, 
    where the solution remains bounded while its slope becomes unbounded in finite time. 
    As described in \cite{camassa1993integrable}, a smooth initial condition can break down into a series of peakons 
    by developing vertical slopes at each inflection point with negative gradient, from which derivative discontinuities emerge. This provides a concrete dynamical mechanism for the formation of singularities. 
    Constantin and Escher \cite{constantin1998global,constantin1998wave} later gave a rigorous justification of this phenomenon, 
    showing that such wave breaking occurs even when the energy remains finite. From a mathematical perspective, wave breaking represents a particular form of finite-time blow-up. 
    More generally, the Camassa–Holm equation admits blow-up solutions even for smooth initial data, 
    indicating that singularity formation is an intrinsic feature of the equation.

    \item \textbf{Peakon Stability}: 
    Another crucial aspect of the Camassa–Holm equation is the stability of peakon solutions.
    Constantin and Strauss \cite{constantin2000stability} investigated the orbital stability of these localized waves,
 showing that the peakons persist under small perturbations. Their results highlight the robustness of peakons as
    physically relevant structures in nonlinear shallow water wave dynamics. In \cite{qiao2006peaked}, Qiao and
    Zhang analyze the peaked and smooth soliton solutions of the Camassa–Holm equation and investigate their dynamical
    behavior. This dynamical approach might be applied to Solitary and self-similar solutions of two-component system of nonlinear Schr\"{o}dinger equations \cite{lin2006solitary}. 

\end{enumerate}

Similarly, the Degasperis–Procesi equation, which also admits peakon solutions, was derived using the method of asymptotic
integrability by Degasperis and Procesi \cite{degasperis1999asymptotic}. As an integrable system, it possesses a Lax
pair, a bi-Hamiltonian structure, and an infinite hierarchy of symmetries and conservation laws
\cite{degasperis2002new}.

The Degasperis–Procesi equation also exhibits wave-breaking phenomena, though the conditions triggering this behavior differ
slightly from those of the Camassa–Holm equation. Liu and Yin~\cite{liu2006global} demonstrated that wave breaking occurs
under specific sign conditions imposed on the initial data. Concerning the blow-up phenomenon, Escher, Liu, and
Yin~\cite{escher2006global} established the precise blow-up rate and characterized the blow-up set for strong
solutions to the Degasperis–Procesi equation across a wide range of initial data. Additionally, Liu and Yin~\cite{liu2006global} proved
that the first blow-up in finite time for the Degasperis–Procesi equation manifests itself as wave breaking, with the potential emergence
of shock waves thereafter. In particular, the blow-up rate for the breaking waves in the Camassa–Holm equation is
$-2$~\cite{constantin2000existence}, while for the Degasperis–Procesi equation, it is $-1$~\cite{escher2006global}. Regarding
peakon stability in the Degasperis–Procesi equation, this property was established by Lin and Liu in~\cite{lin2009stability}.

The Holm-Staley \( b \)-family of equations is a one-parameter generalization of the CH and Degasperis–Procesi equations, introduced
by Holm and Staley \cite{holm2003nonlinear} to describe nonlinear shallow water wave dynamics with varying
nonlinear dispersion properties. This family is given by
\begin{equation}
m_t  + m_xu  + bmu_x = 0,\quad m = u - u_{xx},
\end{equation}
where \( b \) is a real parameter.

In particular, for \( b = 2 \), the equation reduces to the Camassa–Holm equation, while for \( b = 3 \), it corresponds to the DP
equation. The Holm-Staley equation \( b \)-family exhibits rich mathematical structures, including a non-canonical
Hamiltonian formulation, and also admits peakon solutions \cite{degasperis2003integrable}.

The purpose of this paper is twofold. First, we show how to derive the $N$-peakon dynamical system from Camassa--Holm-type equations with quadratic nonlinearity. Second, we consider the Cauchy problem for this class of equations, investigating local and global well-posedness, wave breaking, and ill-posedness.

\section{Camassa--Holm Type Equations with Quadratic Nonlinearities}

In this section, we derive the dynamical system of $N$ -peakon in a nonlocal formulation. The convolution-based computations developed by Lundmark in \cite{lundmark2007formation} are adapted here to a general class of Camassa--Holm-type equations with quadratic nonlinearity:
\begin{equation}
    u_t + \lambda_1 u^2 + \lambda_2 u u_x 
    + G \ast (\lambda_3 u^2 + \lambda_4 u_x^2) 
    + \partial_x \left( G \ast (\lambda_5 u^2 + \lambda_6 u_x^2) \right) = 0,
    \label{Nonlocallambda}
\end{equation}
where $G(x) = \tfrac{1}{2}e^{-|x|}$ is the Helmholtz kernel and $\ast$ denotes the convolution with respect to the spatial variable.

As Constantin and Strauss pointed out, the nonlocal formulation provides a natural framework in which peakon solutions of the Camassa–Holm equation can be rigorously interpreted \cite{constantin2000stability}. We extend this point of view to the general class of equations \eqref{Nonlocallambda} and introduce the following definition.

\begin{definition}[N-peakon solution]
A function of the form
\begin{equation}
u(x,t) = \sum_{i=1}^N p_i(t)\,e^{-|x-q_i(t)|}
\end{equation}
is called an $N$-peakon solution of equation \eqref{Nonlocallambda} on $\mathbb{R}\times [0,T)$, for some $T>0$, if it satisfies \eqref{Nonlocallambda} in the weak sense.

Throughout this paper, we introduce the following shorthand notation. We set
\begin{equation}
\mathcal{E}_\ell = e^{-|x-q_\ell|}, \qquad 
\mathcal{E}_{i,j} = e^{-|q_i-q_j|},
\end{equation}
and
\begin{equation}
\sigma_\ell = \operatorname{sgn}(x-q_\ell), \qquad
\sigma_{ij} = \operatorname{sgn}(q_i-q_j),
\end{equation}
where $q_\ell = q_\ell(t)$ for all $\ell$, depending only on the time variable $t$.
\end{definition}

To derive peakon solutions, a key step is to evaluate convolutions with the Helmholtz kernel $G$. 
Remarkably, the convolution of $G$ with expressions of the form $\mathcal{E}_i \mathcal{E}_j$ remains algebraically closed within the class generated by the above notation. 
More precisely, the result can be expressed in terms of $\mathcal{E}_\ell$, $\sigma_\ell$, $\mathcal{E}_{i,j}$, and $\sigma_{ij}$.

A direct computation yields the following identities.

\begin{lemma}\label{lemma1}
\begin{equation}
\begin{aligned}
    G\ast \Big( \mathcal{E}_i\mathcal{E}_j\Big) & = \frac 13 (  -\mathcal{E}_i \mathcal{E}_j+\mathcal{E}_{i,j}\mathcal{E}_i + \mathcal{E}_{i,j}\mathcal{E}_j) + \frac{2\sigma_{ij}}{3}(\sigma_j\mathcal{E}_i\mathcal{E}_j -\sigma_i\mathcal{E}_j\mathcal{E}_i+\mathcal{E}_{i,j}\sigma_i\mathcal{E}_i - \mathcal{E}_{i,j}\sigma_j\mathcal{E}_j  )\\
    G\ast \Big( \sigma_i\sigma_j\mathcal{E}_i\mathcal{E}_j\Big) & =  \frac 13 (  -\mathcal{E}_i \mathcal{E}_j+\mathcal{E}_{i,j}\mathcal{E}_i + \mathcal{E}_{i,j}\mathcal{E}_j) - \frac{\sigma_{ij}}{3}(\sigma_j\mathcal{E}_i\mathcal{E}_j -\sigma_i\mathcal{E}_j\mathcal{E}_i+\mathcal{E}_{i,j}\sigma_i\mathcal{E}_i - \mathcal{E}_{i,j}\sigma_j\mathcal{E}_j  )
\end{aligned}
\end{equation}
In particular,
\begin{equation}
 G\ast (\mathcal{E}_i^2) = \frac{1}{3}(-\mathcal{E}_{i}^2 +2\mathcal{E}_{i}).
\end{equation}
\end{lemma}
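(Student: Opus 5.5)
Rather than evaluating the convolution integrals piece by piece, I would use that $G$ is the Green's function of $1-\partial_x^2$. For a bounded, continuous, exponentially decaying $f$, the function $w = G\ast f$ is characterised as the unique bounded solution of $w - w_{xx} = f$ in the distributional sense. So it is enough to check that each proposed right-hand side $W$ is bounded and continuous and satisfies $W - W_{xx} = f$ in $\mathcal{D}'(\mathbb{R})$. Uniqueness then follows because bounded solutions of $w-w_{xx}=0$ are zero.

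By symmetry and translation I would reduce to $q_i < q_j$, so that $\sigma_{ij} = -1$; the case $q_i>q_j$ follows by swapping $i$ and $j$, and the coincident case is the "in particular" statement. The line then splits into three regions, $x<q_i$, $q_i<x<q_j$ and $x>q_j$. On each region $\sigma_i$ and $\sigma_j$ are constants. Hence $\mathcal{E}_i\mathcal{E}_j$ is a single exponential: $e^{2x-q_i-q_j}$ on the left, $e^{q_i-q_j}=\mathcal{E}_{i,j}$ in the middle, and $e^{q_i+q_j-2x}$ on the right.

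The piecewise verification then runs as follows.

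\textbf{Classical equation on each region.} Using $(1-\partial_x^2)e^{\pm 2x} = -3e^{\pm 2x}$, $(1-\partial_x^2)e^{\pm x}=0$ and $(1-\partial_x^2)c = c$, I check that the proposed $W$ satisfies $W-W_{xx}=f$ classically on each open region. On the outer regions the $e^{\pm2x}$ terms must carry total coefficient $-\tfrac13$. For the first identity this is $-\tfrac13\mathcal{E}_i\mathcal{E}_j$, and the $\sigma_{ij}$-bracket contributes zero there because $\sigma_j\mathcal{E}_i\mathcal{E}_j-\sigma_i\mathcal{E}_i\mathcal{E}_j=0$ when $\sigma_i=\sigma_j$. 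In the middle region $\mathcal{E}_i\mathcal{E}_j$ is constant and the bracket has an extra piece proportional to $\mathcal{E}_i\mathcal{E}_j$. The combined coefficient becomes $-\tfrac13+\tfrac43=1$ for the first identity and $-\tfrac13-\tfrac23=-1$ for the second, matching $f=\pm\mathcal{E}_{i,j}$. The leftover terms, such as $\mathcal{E}_{i,j}\mathcal{E}_i$ and $\sigma_i\mathcal{E}_i$, are $e^{\pm x}$ on each region and are therefore annihilated.

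\textbf{Matching across $q_i,q_j$.} Next I check that $W$ is continuous at $x=q_i$ and $x=q_j$. The distributional equation also requires $W_x$ to be continuous there, since $f$ has no delta masses. Any jump in $W_x$ at $q_\ell$ would produce a $\delta_{q_\ell}$ term in $W-W_{xx}$.

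I expect this $C^1$ matching to be the main obstacle, because that is where the specific coefficients $\tfrac13,\tfrac23$ and the interplay between the $\sigma$- and $\mathcal{E}_{i,j}$-terms are forced. The jumps of $\sigma_i\mathcal{E}_i\mathcal{E}_j$-type terms must cancel against the kinks of $\mathcal{E}_{i,j}\mathcal{E}_i$ and the jumps of $\mathcal{E}_{i,j}\sigma_i\mathcal{E}_i$. I would compute one-sided derivatives directly at $q_i$, using $\mathcal{E}_j(q_i)=\mathcal{E}_{i,j}$, and then treat $q_j$ by the $i\leftrightarrow j$ symmetry, noting that the bracket is antisymmetric and is multiplied by $\sigma_{ij}$.

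Boundedness of $W$ is clear, so uniqueness yields both identities. The special case $i=j$ follows by setting $q_i=q_j$: then $\mathcal{E}_{i,i}=1$, the $\sigma_{ij}$-terms drop out with the convention $\operatorname{sgn}0=0$, and the formula reduces to $\tfrac13(-\mathcal{E}_i^2+2\mathcal{E}_i)$. This can also be checked independently in the same way, with one kink at $q_i$.
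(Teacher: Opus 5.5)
Your proof is correct, but it takes a different route from the paper. The paper gives no argument beyond ``a direct computation'', presumably in the spirit of Lundmark's convolution calculations: split $\frac12\int_{\mathbb{R}}e^{-|x-y|}\mathcal{E}_i(y)\mathcal{E}_j(y)\,dy$ at $x,q_i,q_j$ for each position of $x$ and integrate the exponentials. You instead \emph{verify} the stated right-hand side $W$ via the Green's-function characterisation. $W$ is bounded, solves $W-W_{xx}=f$ classically on each of the three intervals, and is $C^1$ across $q_i$ and $q_j$, so uniqueness of bounded solutions of $(1-\partial_x^2)w=f$ gives $W=G\ast f$. The region-wise checks are exactly as you state, with coefficients $-\frac13+\frac43=1$ and $-\frac13-\frac23=-1$ in the middle. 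The matching you flagged as the main obstacle does go through. Near $x=q_i$, where $\sigma_j\equiv\sigma_{ij}$, the first right-hand side equals $\frac13\mathcal{E}_i\mathcal{E}_j+\frac13\mathcal{E}_{i,j}\mathcal{E}_i-\frac13\mathcal{E}_{i,j}\mathcal{E}_j+\frac{2\sigma_{ij}}{3}\sigma_i\mathcal{E}_i(\mathcal{E}_{i,j}-\mathcal{E}_j)$. This is continuous (the last factor vanishes at $q_i$), and its derivative jumps sum to $(-\frac23-\frac23+\frac43)\mathcal{E}_{i,j}=0$. The second identity works the same way, with jumps $(\frac43-\frac23-\frac23)\mathcal{E}_{i,j}=0$. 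One small correction: once you fix $q_i<q_j$, the swap $i\leftrightarrow j$ alone does not carry the kink at $q_i$ to the kink at $q_j$; it lands on the left kink of the opposite ordering. So either do the jump computation at $q_i$ without fixing the order, as above, or combine the swap with the reflection $x\mapsto -x$, under which both identities are invariant. As for the trade-off, direct integration is constructive and would find the formula without knowing it in advance. Your verification needs the answer beforehand, but it replaces the piecewise exponential integrals by differentiation and two point matchings. It also shows where the $\frac13$ and $\frac23$ come from: the particular solution $-\frac13e^{\pm 2x}$ of $(1-\partial_x^2)w=e^{\pm 2x}$ and the $C^1$ gluing.
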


\begin{theorem}\label{theorem1}
    If an equation of the form \eqref{Nonlocallambda} satisfies the parameter conditions
    \begin{equation*}
    \lambda_3 = \lambda_1,\quad \lambda_4 = 2\lambda_1,\quad \lambda_5 = \frac32\lambda_2 - \lambda_6,
    \end{equation*}
    then it admits an $N$-peakon solution, where the time evolution of the amplitude $p_i$ and positions $q_i$ is governed by the dynamical system:
\begin{equation}\label{ODEsystems}
\begin{aligned}
    p_i^\prime &= 2\sum_{j=1}^N \left( \frac{2\lambda_5 - \lambda_6}{3}\sigma_{ij} - \lambda_1\right) p_i p_j \mathcal{E}_{i,j}, \\
    q_i^\prime &= 2\sum_{j=1}^N  \left( \frac{\lambda_5 + \lambda_6}{3} \right) p_j\mathcal{E}_{i,j}.
\end{aligned}
\end{equation}
\end{theorem}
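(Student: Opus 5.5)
Substitute the ansatz $u=\sum_i p_i\mathcal{E}_i$ into \eqref{Nonlocallambda}, reduce every nonlocal term to local expressions with Lemma~\ref{lemma1}, and match coefficients of the independent functions. Away from the peaks we have $u_x=-\sum_i p_i\sigma_i\mathcal{E}_i$ and $u_t=\sum_i\big(p_i'\mathcal{E}_i+p_iq_i'\sigma_i\mathcal{E}_i\big)$, so
\[
u^2=\sum_{i,j}p_ip_j\mathcal{E}_i\mathcal{E}_j,\qquad u_x^2=\sum_{i,j}p_ip_j\sigma_i\sigma_j\mathcal{E}_i\mathcal{E}_j,\qquad uu_x=-\sum_{i,j}p_ip_j\sigma_j\mathcal{E}_i\mathcal{E}_j .
\]
Lemma~\ref{lemma1} then expresses $G\ast(\lambda_3u^2+\lambda_4u_x^2)$ as a combination of the quadratic terms $\mathcal{E}_i\mathcal{E}_j$, $\sigma_j\mathcal{E}_i\mathcal{E}_j$ and the linear terms $\mathcal{E}_{i,j}\mathcal{E}_i$, $\mathcal{E}_{i,j}\sigma_i\mathcal{E}_i$. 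For $\partial_x G\ast(\cdot)$ I would differentiate the formulas of Lemma~\ref{lemma1} in $x$, using $\partial_x\mathcal{E}_\ell=-\sigma_\ell\mathcal{E}_\ell$ and $\sigma_\ell^2=1$. Equivalently, $\partial_x G\ast f=(\partial_xG)\ast f$, which is legitimate because the convolution is $C^1$ for bounded $f$. The weak formulation only asks that the resulting identity hold in $L^1_{loc}$, so pointwise verification off the finite set $\{q_i\}$ suffices, with continuity of $u$ and of $G\ast$-terms justifying this.

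Next I sort the resulting expression into two groups. The \emph{quadratic} group consists of terms $\mathcal{E}_i\mathcal{E}_j$, $\sigma_i\mathcal{E}_i\mathcal{E}_j$ and $\sigma_i\sigma_j\mathcal{E}_i\mathcal{E}_j$ summed over $i,j$. These come from $\lambda_1u^2$, $\lambda_2uu_x$ and the local parts of the convolutions, and they are not of peakon form, so their total coefficient must vanish identically.
\begin{itemize}
\item Since $\sigma_{ij}$ is antisymmetric, the symmetrized combination $\sigma_{ij}(\sigma_j-\sigma_i)\mathcal{E}_i\mathcal{E}_j$ appears; between $q_i$ and $q_j$ it equals $2\mathcal{E}_i\mathcal{E}_j$ up to sign and vanishes outside. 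I would check this case by case on the relative order of $x,q_i,q_j$, and I expect this to be the main obstacle.
\item In the non-derivative group, $\lambda_1u^2$ should cancel against $-\tfrac13(\lambda_3+\lambda_4)u^2$ plus the $\sigma$-bracket contribution $\tfrac{2\lambda_3-\lambda_4}{3}\sigma_{ij}(\dots)$. Working region by region, I expect exactly the conditions $\lambda_3=\lambda_1$ and $\lambda_4=2\lambda_1$; with these, $2\lambda_3-\lambda_4=0$ and the bracket term disappears.
\item In the derivative group, $\lambda_2uu_x$ must cancel the quadratic part of $\partial_x G\ast(\lambda_5u^2+\lambda_6u_x^2)$. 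The diagonal terms $i=j$, handled via $G\ast\mathcal{E}_i^2$, give $\lambda_2=\tfrac23(\lambda_5+\lambda_6)$, i.e.\ $\lambda_5=\tfrac32\lambda_2-\lambda_6$. I would then verify that the off-diagonal cross terms cancel under the same condition by the same regional argument.
\end{itemize}

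The \emph{linear} group collects the surviving terms $\mathcal{E}_{i,j}\mathcal{E}_i$ and $\mathcal{E}_{i,j}\sigma_i\mathcal{E}_i$, together with $p_i'\mathcal{E}_i$ and $p_iq_i'\sigma_i\mathcal{E}_i$ from $u_t$. Since the functions $\{\mathcal{E}_i,\sigma_i\mathcal{E}_i\}$ are linearly independent for distinct $q_i$, I equate coefficients, symmetrizing the double sums in $i\leftrightarrow j$ and using $\sigma_{ji}=-\sigma_{ij}$.
\begin{itemize}
\item The coefficient of $\mathcal{E}_i$ gives $p_i'$. Its contributions are $-\lambda_1$ from the $\tfrac13\mathcal{E}_{i,j}\mathcal{E}_i$ terms (which simplify because $\lambda_3+\lambda_4=3\lambda_1$), and $\sigma_{ij}\tfrac{2\lambda_5-\lambda_6}{3}$ from the derivative of the $\sigma_{ij}\mathcal{E}_{i,j}\sigma_i\mathcal{E}_i$ terms.
\item The coefficient of $\sigma_i\mathcal{E}_i$ gives $q_i'$. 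It comes from differentiating $\tfrac13(\lambda_5+\lambda_6)\mathcal{E}_{i,j}\mathcal{E}_i$, and yields \eqref{ODEsystems}.
\end{itemize}
The factor $2$ in \eqref{ODEsystems} arises from summing the $(i,j)$ and $(j,i)$ terms. I would track the constants carefully at this stage.
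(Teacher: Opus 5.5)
Your proposal is correct and follows the paper's own route. You substitute the ansatz, evaluate the convolutions with Lemma~\ref{lemma1} (differentiating off the peaks for the $\partial_x G\ast$ term), symmetrize in $i\leftrightarrow j$, check that the stated conditions kill the coefficients of $\mathcal{E}_i\mathcal{E}_j$ and $\sigma_i\mathcal{E}_i\mathcal{E}_j$, and read off $p_i'$ and $q_i'$ from the coefficients of $\mathcal{E}_{i,j}\mathcal{E}_i$ and $\mathcal{E}_{i,j}\sigma_i\mathcal{E}_i$. The region-by-region check you flag as the main obstacle is not actually needed. The term $(\sigma_j-\sigma_i)\mathcal{E}_i\mathcal{E}_j$ is piecewise constant (equal to $2\sigma_{ij}\mathcal{E}_{i,j}$ between the peaks and $0$ outside), so its $x$-derivative vanishes off the peaks. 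Symmetrizing $\frac{\lambda_5+\lambda_6}{3}(\sigma_i+\sigma_j)\mathcal{E}_i\mathcal{E}_j$ gives exactly $-\frac{2}{3}(\lambda_5+\lambda_6)uu_x$, which cancels $\lambda_2 uu_x$ for all $i,j$ at once.
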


\begin{proof}
    By the $N$-peakon ansatz and Lemma \ref{lemma1}, we obtain:
    \begin{align*}
    G \ast \Big(\lambda_3 u^2 + \lambda_4 u_x^2 \Big) = \sum_{i=1}^N \sum_{j=1}^N p_i p_j \Bigg[ &\frac{\lambda_3 + \lambda_4}{3} \Big( -\mathcal{E}_i \mathcal{E}_j + \mathcal{E}_{i,j}\mathcal{E}_i + \mathcal{E}_{i,j}\mathcal{E}_j \Big) \\
    + &\frac{(2\lambda_3 - \lambda_4)\sigma_{ij}}{3} \Big( \sigma_j\mathcal{E}_i\mathcal{E}_j - \sigma_i\mathcal{E}_j\mathcal{E}_i + \mathcal{E}_{i,j}\sigma_i\mathcal{E}_i - \mathcal{E}_{i,j}\sigma_j\mathcal{E}_j \Big) \Bigg].
    \end{align*}
    Taking derivative yields:
    \begin{align*}
    \partial_x G \ast \Big(\lambda_5 u^2 + \lambda_6 u_x^2 \Big) = \sum_{i=1}^N \sum_{j=1}^N p_i p_j \Bigg[ &\frac{\lambda_5 + \lambda_6}{3} \Big( \sigma_i\mathcal{E}_i \mathcal{E}_j +\sigma_j\mathcal{E}_i \mathcal{E}_j -\sigma_i \mathcal{E}_{i,j}\mathcal{E}_i - \mathcal{E}_{i,j}\sigma_j\mathcal{E}_j \Big) \\
    + &\frac{(2\lambda_5 - \lambda_6)\sigma_{ij}}{3} \Big( -\mathcal{E}_{i,j} \mathcal{E}_i + \mathcal{E}_{i,j}\mathcal{E}_j \Big) \Bigg].
    \end{align*}
    Since $i$ and $j$ are dummy indices, we can use symmetry to rewrite the combined expression under the double summation $\sum_{i,j}$ as:
    \begin{align*}
        & \left( \lambda_1 - \frac{\lambda_3+ \lambda_4}{3}\right)\mathcal{E}_i \mathcal{E}_j + 2 \left(  \frac{\lambda_3 + \lambda_4}{3} - \frac{(2\lambda_5 - \lambda_6)\sigma_{ij}}{3}\right) \mathcal{E}_{i,j}\mathcal{E}_i \\
        + & 2 \left( - \frac{(2\lambda_3 - \lambda_4)\sigma_{ij}}{3} + \frac{2\lambda_5 + 2\lambda_6 - 3\lambda_2}{6}\right) \sigma_i\mathcal{E}_i\mathcal{E}_j + 2\left(  \frac{(2\lambda_3 - \lambda_4)\sigma_{ij}}{3} - \frac{\lambda_5 + \lambda_6}{3}\right) \mathcal{E}_{i,j}\sigma_i\mathcal{E}_i.
    \end{align*}
    Under the assumed parameter conditions,
    \begin{equation*}
    \lambda_1 - \frac{\lambda_3+ \lambda_4}{3} = - \frac{2\lambda_3 - \lambda_4}{3} = \frac{2\lambda_5 + 2\lambda_6 - 3\lambda_2}{6} = 0,
    \end{equation*}
    the peakon dynamical system is derived by:
    \begin{align*}
        p_i^\prime &= 2\sum_{j=1}^N \left(  \frac{(2\lambda_5 - \lambda_6)\sigma_{ij}}{3} - \frac{\lambda_3 + \lambda_4}{3}\right)p_i p_j \mathcal{E}_{i,j}, \\
        q_i^\prime &= 2\sum_{j=1}^N  \left( \frac{\lambda_5 + \lambda_6}{3}- \frac{(2\lambda_3 - \lambda_4)\sigma_{ij}}{3}\right) p_j\mathcal{E}_{i,j}.
    \end{align*}
\end{proof}

We now show that Theorem \ref{theorem1} encompasses the standard peakon constructions for a broad class of Camassa--Holm type equations, including the Camassa–Holm equation, the Degasperis–Procesi equation, and the Holm--Staley equation.
\begin{corollary}
Camassa--Holm equation \eqref{Camassa-Holm} admits the following two-peakon dynamical system:
\begin{equation*}
\begin{cases}
    p_1' = p_1 p_2 \sigma_{12} \mathcal{E}_{1,2}, \\
    p_2' = -p_1 p_2 \sigma_{12} \mathcal{E}_{1,2}, \\
    q_1' = p_1 + p_2 \mathcal{E}_{1,2}, \\
    q_2' = p_2 + p_1 \mathcal{E}_{1,2},
\end{cases}
\end{equation*}
where $\mathcal{E}_{1,2} = e^{-|q_1 - q_2|}$ and $\sigma_{12} = \operatorname{sgn}(q_1 - q_2)$.
\end{corollary}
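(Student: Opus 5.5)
We obtain the corollary by specializing Theorem \ref{theorem1}. First we read off the coefficients. Writing \eqref{Camassa-Holm} in the form \eqref{Nonlocallambda} gives
\begin{equation*}
\lambda_1=\lambda_3=\lambda_4=0,\qquad \lambda_2=1,\qquad \lambda_5=1,\qquad \lambda_6=\tfrac12 .
\end{equation*}
There is no local $u^2$ term and no $G\ast(\cdot)$ term without a derivative, which is why $\lambda_1=\lambda_3=\lambda_4=0$.

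Next we check the hypotheses of Theorem \ref{theorem1}. The conditions $\lambda_3=\lambda_1$ and $\lambda_4=2\lambda_1$ hold trivially, since all three values vanish. The third condition reads $\lambda_5=\tfrac32\lambda_2-\lambda_6=\tfrac32-\tfrac12=1$, which also holds. The theorem therefore applies, and \eqref{ODEsystems} becomes
\begin{equation*}
p_i'=2\sum_{j}\frac{2-\tfrac12}{3}\,\sigma_{ij}\,p_ip_j\,\mathcal{E}_{i,j}=\sum_j \sigma_{ij}\,p_ip_j\,\mathcal{E}_{i,j},
\qquad
q_i'=2\sum_j \frac{1+\tfrac12}{3}\,p_j\,\mathcal{E}_{i,j}=\sum_j p_j\,\mathcal{E}_{i,j}.
\end{equation*}

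Finally we set $N=2$ and use the conventions $\mathcal{E}_{i,i}=1$, $\sigma_{ii}=\operatorname{sgn}(0)=0$, $\sigma_{21}=-\sigma_{12}$ and $\mathcal{E}_{2,1}=\mathcal{E}_{1,2}$. In the $p$-equations the diagonal terms drop out because $\sigma_{ii}=0$, leaving $p_1'=p_1p_2\sigma_{12}\mathcal{E}_{1,2}$ and $p_2'=-p_1p_2\sigma_{12}\mathcal{E}_{1,2}$. In the $q$-equations the diagonal terms contribute $p_i$, giving $q_1'=p_1+p_2\mathcal{E}_{1,2}$ and $q_2'=p_2+p_1\mathcal{E}_{1,2}$.

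The only delicate point is the diagonal $j=i$ term. In the weak formulation, $\operatorname{sgn}(0)=0$ arises as the average of $u_x$ across the peak at $q_i$. We would simply adopt this convention, which is the one implicit in Theorem \ref{theorem1}. Everything else is direct substitution.
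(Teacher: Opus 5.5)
Your proposal is correct and follows the paper's route. The paper obtains the corollary by reading off $\lambda_1=\lambda_3=\lambda_4=0$, $\lambda_2=\lambda_5=1$, $\lambda_6=\tfrac12$, checking the hypotheses of Theorem~\ref{theorem1}, and specializing \eqref{ODEsystems} to $N=2$ with $\sigma_{ii}=0$, which is exactly what you do. Your convention $\sigma_{ii}=0$ for the diagonal terms is the one implicit in the paper; it is in fact forced by the index-swap symmetrization in the proof of Theorem~\ref{theorem1}, which uses $\sigma_{ji}=-\sigma_{ij}$ for all $i,j$.
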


\begin{corollary}
    Equation
    \begin{equation}
    \label{XiaQiao}
    u_t + \frac{1}{2} u^2 + u u_x + G \ast \left( \frac{1}{2}u^2 + u_x^2 \right) +  \partial_x \left( G \ast  \left( \frac{1}{2} u^2
+ u_x^2 \right) \right) = 0    
    \end{equation}
    admits the following two-peakon dynamical system
\begin{equation*}
        \left\{
\begin{aligned}
    p_1' &= -  p_1^2 - p_1 p_2  \mathcal{E}_{1,2},  \\
    p_2' &= -  p_2^2 - p_1 p_2  \mathcal{E}_{1,2},  \\
    q_1' & =  p_1 +  p_2  \mathcal{E}_{1,2}, \\
    q_2' & =  p_2 +  p_1 \mathcal{E}_{1,2}.
\end{aligned}
\right.
\label{2peakonsystem}
\end{equation*}

\end{corollary}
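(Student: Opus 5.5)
The plan is to obtain this corollary as a direct specialization of Theorem \ref{theorem1}. Equation \eqref{XiaQiao} is of the form \eqref{Nonlocallambda} with
\[
\lambda_1=\tfrac12,\quad \lambda_2=1,\quad \lambda_3=\tfrac12,\quad \lambda_4=1,\quad \lambda_5=\tfrac12,\quad \lambda_6=1 .
\]
So the work consists of checking the parameter conditions, substituting into \eqref{ODEsystems}, and setting $N=2$.

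\textbf{Step 1: the parameter conditions.} We need $\lambda_3=\lambda_1$, which reads $\tfrac12=\tfrac12$. We need $\lambda_4=2\lambda_1$, which reads $1=1$. Finally we need $\lambda_5=\tfrac32\lambda_2-\lambda_6$, which reads $\tfrac32-1=\tfrac12$. All three hold, so Theorem \ref{theorem1} gives an $N$-peakon solution governed by \eqref{ODEsystems}.

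\textbf{Step 2: the coefficients.} We have $\frac{2\lambda_5-\lambda_6}{3}=\frac{1-1}{3}=0$, so the $\sigma_{ij}$ term in $p_i'$ drops out. This leaves
\[
p_i'=-2\lambda_1\sum_j p_ip_j\mathcal{E}_{i,j}=-\sum_j p_ip_j\mathcal{E}_{i,j}.
\]
Also $\frac{\lambda_5+\lambda_6}{3}=\frac12$, so
\[
q_i'=\sum_j p_j\mathcal{E}_{i,j}.
\]

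\textbf{Step 3: specialize to $N=2$.} We use $\mathcal{E}_{i,i}=e^0=1$ and $\mathcal{E}_{1,2}=\mathcal{E}_{2,1}$. This gives
\begin{align*}
p_1'&=-p_1^2-p_1p_2\mathcal{E}_{1,2}, & p_2'&=-p_2^2-p_1p_2\mathcal{E}_{1,2},\\
q_1'&=p_1+p_2\mathcal{E}_{1,2}, & q_2'&=p_2+p_1\mathcal{E}_{1,2},
\end{align*}
which is the claimed system.

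There is no genuine obstacle here. The one point needing care is the diagonal terms $i=j$, where $\sigma_{ii}=\operatorname{sgn}(0)$ is ambiguous. In this case it does not matter, because the coefficient of $\sigma_{ij}$ vanishes identically (that is, $2\lambda_5=\lambda_6$), so the diagonal contribution $-p_i^2$ is unambiguous.
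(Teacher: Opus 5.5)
Your proposal is correct and follows the paper's route: the corollary is the specialization of Theorem 1 with $(\lambda_1,\dots,\lambda_6)=(\tfrac12,1,\tfrac12,1,\tfrac12,1)$. In that case $2\lambda_5-\lambda_6=0$ removes the $\sigma_{ij}$ term (so the $\sigma_{ii}$ ambiguity you flag does not arise), and $\tfrac{2(\lambda_5+\lambda_6)}{3}=1$ gives the stated equations for $q_i'$.
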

\begin{remark}
Equation \eqref{XiaQiao} arises from the two-component integrable system proposed by Xia and Qiao \cite{xia2015new}. Consequently, it possesses a Lax pair corresponding to its local form, given by
\begin{equation*}
    U = \frac{1}{2} \begin{pmatrix}
    -1 & \lambda m \\
    0 & 1
    \end{pmatrix}, \quad
    V = -\frac{1}{2} \begin{pmatrix}
    \lambda^{-2} & -\lambda^{-1}(u - u_x) + \lambda m u \\
    0 & -\lambda^{-2}
    \end{pmatrix}.
\end{equation*}
\end{remark}

\section{Peakon Solutions}

In this section, we apply Theorem \ref{theorem1} to obtain explicit peakon solutions and study their properties.

\begin{theorem}
    Equation \eqref{Nonlocallambda} has single peakon traveling-wave solutions if and only if
    \begin{equation*}
    \left \{
        \begin{aligned}
            & \lambda_1 = \lambda_3 = \lambda_4 = 0,\\
            & \lambda_2- \frac{2\lambda_5}{3} - \frac{2\lambda_6}{3} = 0.
        \end{aligned}  \right.
    \end{equation*}
\end{theorem}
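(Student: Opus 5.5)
We plug the single-peakon ansatz $u(x,t)=p\,e^{-|x-ct|}$, with constant amplitude $p\neq 0$ and speed $c$, directly into \eqref{Nonlocallambda}. We then reuse the convolution formulas from the proof of Theorem \ref{theorem1} with $N=1$, i.e.\ the special case $G\ast(\mathcal{E}_1^2)=\frac13(-\mathcal{E}_1^2+2\mathcal{E}_1)$ of Lemma \ref{lemma1}. Writing $\mathcal{E}=\mathcal{E}_1$ and $\sigma=\sigma_1$, we have $u^2=p^2\mathcal{E}^2$ and $u_x^2=p^2\mathcal{E}^2$, since $\sigma^2=1$ away from the peak. Hence
\begin{equation*}
G\ast(\lambda_3u^2+\lambda_4u_x^2)=\frac{(\lambda_3+\lambda_4)p^2}{3}\left(-\mathcal{E}^2+2\mathcal{E}\right),
\end{equation*}
and differentiating,
\begin{equation*}
\partial_x G\ast(\lambda_5u^2+\lambda_6u_x^2)=\frac{(\lambda_5+\lambda_6)p^2}{3}\left(2\sigma\mathcal{E}^2-2\sigma\mathcal{E}\right).
\end{equation*}
The local terms are $u_t=cp\,\sigma\mathcal{E}$, $\lambda_1u^2=\lambda_1p^2\mathcal{E}^2$ and $\lambda_2uu_x=-\lambda_2p^2\sigma\mathcal{E}^2$.

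Collecting terms, the left-hand side of \eqref{Nonlocallambda} becomes a linear combination of the four functions $\mathcal{E}^2$, $\mathcal{E}$, $\sigma\mathcal{E}^2$ and $\sigma\mathcal{E}$:
\begin{equation*}
p^2\Big(\lambda_1-\tfrac{\lambda_3+\lambda_4}{3}\Big)\mathcal{E}^2+\tfrac{2(\lambda_3+\lambda_4)p^2}{3}\mathcal{E}+p^2\Big(\tfrac{2(\lambda_5+\lambda_6)}{3}-\lambda_2\Big)\sigma\mathcal{E}^2+\Big(c p-\tfrac{2(\lambda_5+\lambda_6)p^2}{3}\Big)\sigma\mathcal{E}.
\end{equation*}
These four functions are linearly independent on $\mathbb{R}\setminus\{ct\}$: separating $x>ct$ from $x<ct$ reduces this to the independence of $e^{-y}$ and $e^{-2y}$ on a half-line. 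Since every term is continuous off the peak and locally integrable, satisfying the equation in the weak (distributional) sense is equivalent to pointwise vanishing almost everywhere. So the solution condition is that all four coefficients vanish.

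Those four conditions read $\lambda_3+\lambda_4=0$, $\lambda_1=0$, $\lambda_2=\frac23(\lambda_5+\lambda_6)$, and $c=\frac23(\lambda_5+\lambda_6)p$. The last one just fixes the speed. This gives sufficiency immediately once the stated conditions hold, since $\lambda_3=\lambda_4=0$ implies $\lambda_3+\lambda_4=0$.

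The main obstacle is necessity. The computation only yields $\lambda_3+\lambda_4=0$, not $\lambda_3=\lambda_4=0$ separately. The reason is that for a single peakon $u^2=u_x^2$ almost everywhere, so $\lambda_3$ and $\lambda_4$ enter only through their sum. To recover the stated form I would interpret ``single peakon traveling-wave solutions'' as a family, namely a genuinely traveling peakon for every amplitude $p$. I would then check whether the intended condition should read $\lambda_3+\lambda_4=0$, and compare with the paper's normalization in Theorem \ref{theorem1}, where $\lambda_4=2\lambda_3$. Combined with $\lambda_3+\lambda_4=0$, that normalization forces $\lambda_3=\lambda_4=0$. I would therefore first try to justify necessity within the class of Theorem \ref{theorem1} ($\lambda_3=\lambda_1$, $\lambda_4=2\lambda_1$), where $\lambda_1=0$ gives $\lambda_3=\lambda_4=0$ directly. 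If no such restriction is intended, I would record explicitly that only $\lambda_3+\lambda_4=0$ is necessary.
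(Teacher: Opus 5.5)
Your computation is correct, and so is your diagnosis: the ``only if'' direction, read literally for general parameters, is false. For a single peakon $u_x^2=u^2$ almost everywhere, so $\lambda_3,\lambda_4$ enter only through $\lambda_3+\lambda_4$, and $\lambda_5,\lambda_6$ only through $\lambda_5+\lambda_6$. Setting your four coefficients to zero gives exactly $\lambda_1=0$, $\lambda_3+\lambda_4=0$, $\lambda_2=\frac23(\lambda_5+\lambda_6)$, with speed $\frac23(\lambda_5+\lambda_6)p$. Here is a concrete counterexample. Take the Degasperis--Procesi choice $\lambda_2=1$, $\lambda_5=\frac32$, $\lambda_6=0$, and add $G\ast(u^2-u_x^2)$, i.e.\ $\lambda_3=1$, $\lambda_4=-1$, $\lambda_1=0$. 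The added term vanishes identically on $u=c\,e^{-|x-ct|}$, so this peakon still solves the equation, yet $\lambda_3\neq 0$.

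The paper argues differently. It invokes the system \eqref{ODEsystems} of Theorem~\ref{theorem1}, which was derived only under $\lambda_3=\lambda_1$, $\lambda_4=2\lambda_1$, $\lambda_5=\frac32\lambda_2-\lambda_6$. It then lists $\frac{2\lambda_3-\lambda_4}{3}=0$ among the constraints obtained by substituting the single-peakon ansatz. That constraint is not produced when $N=1$. In the double-sum computation of Theorem~\ref{theorem1}, $2\lambda_3-\lambda_4$ always appears multiplied by $\sigma_{ij}$, and $\sigma_{11}=\operatorname{sgn}(0)=0$. This matches the formula for $G\ast(\mathcal{E}_i^2)$ in Lemma~\ref{lemma1}, which has no sign terms. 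Precisely this spurious constraint, combined with $\lambda_3+\lambda_4=0$, is what yields $\lambda_3=\lambda_4=0$ in the paper. Your direct substitution avoids both the reliance on Theorem~\ref{theorem1} and this error, and it gives the correct general criterion. Your weak-sense justification is also fine, since $u$ is Lipschitz and its distributional derivatives are the a.e.\ ones.

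One step in your plan is a dead end. Reading the statement as requiring a traveling peakon for every amplitude $p$ cannot separate $\lambda_3$ from $\lambda_4$. Each of your first three coefficients is $p^2$ times a fixed combination in which $\lambda_3,\lambda_4$ appear only as $\lambda_3+\lambda_4$, and the fourth merely fixes $c$, so varying $p$ adds nothing.

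The two resolutions you name are the right ones. First, you can restrict to the class of Theorem~\ref{theorem1}. There $\lambda_2=\frac23(\lambda_5+\lambda_6)$ holds automatically and $p'=-2\lambda_1p^2$, so a traveling wave forces $\lambda_1=0$, hence $\lambda_3=\lambda_4=0$. Second, you can keep general parameters and replace $\lambda_3=\lambda_4=0$ by $\lambda_3+\lambda_4=0$. Sufficiency as stated is correct either way.
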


\begin{proof}
If equation \eqref{Nonlocallambda} admits a single peakon solution $p(t)e^{-|x-q(t)|}$, the corresponding dynamical system must
 satisfy system \eqref{ODEsystems}. To ensure the existence of a traveling wave peakon solution of the
form \( u(x,t) = u(x - ct) \), the velocity \( q'(t) \) must remain constant, which implies
\begin{equation*}
    q' = c = \frac{2(\lambda_5 + \lambda_6)}{3} p.
\end{equation*}
As a result, \( p(t) \) must be constant. Since \( p^2 \neq 0 \) for a nontrivial solution, it follows that $\frac{\lambda_3 + \lambda_4}{3} = 0$.
Furthermore, substituting the single peakon ansatz into equation \eqref{Nonlocallambda} yields the following
constraints:
\begin{equation*}
    \lambda_1 - \frac{\lambda_3+ \lambda_4}{3} = \frac{2\lambda_3 - \lambda_4}{3} = \frac{2\lambda_5 + 2\lambda_6 - 3\lambda_2}{6} = 0.
\end{equation*}
By Theorem \ref{theorem1}, equation \eqref{Nonlocallambda} possesses a single peakon traveling-wave
solution if and only if:
\begin{equation*}
    \lambda_1 = \lambda_3 = \lambda_4 = 0, \quad \lambda_2 - \frac{2\lambda_5}{3} - \frac{2\lambda_6}{3} = 0.
\end{equation*}
\end{proof}

\begin{remark}
Indeed, if $\lambda_1 = \lambda_3 = \lambda_4 = 0$, then equation \eqref{Nonlocallambda} reduces to a conservation form. Therefore, the expression
\begin{equation*}
\frac{\lambda_2}{2}u^2 + G \ast (\lambda_5 u^2 + \lambda_6 u_x^2)
\end{equation*}
can be interpreted as the associated flux.
\end{remark}

\begin{example}
    \label{1peakoncorollary}
Let
\[
u(x,t) =  p(t) \mathrm{e}^{-|x-q(t)|},\quad (x,t) \in \mathbb{R}^2.
\]
If the coefficients in \eqref{Nonlocallambda} satisfy  the following relations:
\[
\lambda_3 = \lambda_1,\quad \lambda_4 = 2\lambda_1,\quad \lambda_5 = \tfrac{3}{2}\lambda_2 - \lambda_6,
\]
then equation \eqref{Nonlocallambda} holds if and only if
\begin{equation}
\left\{
\begin{aligned}
    p' &= -2\lambda_1 p^2, \\
    q' &= \lambda_2 p.
\end{aligned}
\right.
\label{1peakonODE}
\end{equation}
Furthermore, the solution \( u(x,t) \) takes the following form:
\begin{itemize}
    \item \textbf{(Traveling wave solution)} If \( \lambda_1 = 0 \), then
    \[
    u(x,t) = c\, \mathrm{e}^{-|x - \lambda_2 ct|},
    \]
    where \( c \) is an arbitrary constant.
    \item \textbf{(Non-traveling wave solution)} If $\lambda_1 \neq 0$, then
\begin{eqnarray}\label{NTWS}
u(x,t) = \frac{1}{2\lambda_1 t - A}\, \exp\left(-\left| x - \frac{\lambda_2}{2\lambda_1} \ln|2\lambda_1 t - A| - B \right|\right),
\end{eqnarray}
defined on any connected domain where $2\lambda_1 t - A \neq 0$, 
where $A$ and $B$ are arbitrary constants.

\end{itemize}
\end{example}

Although we have derived the explicit $N$-peakon ODE system, numerical methods are recommended to solve peakon solutions, since symbolic computations can be highly time-consuming. We now consider a particular Cauchy problem associated with equation \eqref{XiaQiao}, namely
\begin{equation}
\left\{\begin{array}{l}
u_t + \frac{1}{2} u^2 + u u_x + G \ast \left( \frac{1}{2}u^2 + u_x^2 \right) 
+ \partial_x \left( G \ast \left( \frac{1}{2} u^2 + u_x^2 \right) \right) = 0, 
\quad x\in \mathbb{R},\ t>t_0, \\
u(x, t_0)= \xi_1 \mathrm{e}^{-|x - \eta_1 |} + \xi_2 \mathrm{e}^{-|x - \eta_2 |},\quad \eta_1 < \eta_2,
\end{array}\right.
\label{thetaab111nonlocal}
\end{equation}
for some $t_0 \in \mathbb{R}$. After a tedious computation, the time-shifted solution to the system \eqref{thetaab111nonlocal} is given by
\begin{equation*}
\left\{
    \begin{aligned}
        \tilde p_1(s) & = (C_1 + C_2) \frac{\mu_1 \alpha s^{-\beta} + \mu_2 \beta s^{-\alpha}}{\mu_1 s^\alpha + \mu_2 s^\beta}, \\
        \tilde{p}_2(s) & = \frac{C_2}{s}, \\
        \tilde{q}_1(s) & = \ln\left ( \frac{C_1}{C_1 + C_2} \frac{\mu_1 s^\alpha + \mu_2 s^\beta}{\mu_1 \alpha s^{-\beta}
        + \mu_2 \beta s^{-\alpha}}\right),\\
        \tilde{q}_2(s) & = \ln(s),
    \end{aligned}
\right.
\end{equation*}
where, with $t = \frac{s-C_3}{C_1+C_2}$, the time-shifted variables satisfy
\begin{equation*}
\tilde p_i(s) = p_i(t), \quad \tilde q_i(s) = q_i(t), \quad i=1,2.
\end{equation*}
Hence, a family of two-peakon solutions can be expressed as
\begin{equation*}
    \begin{aligned}
        u(x,t) 
        &= (C_1 + C_2)\,
        \frac{\mu_1 \alpha s^{-\beta} + \mu_2 \beta s^{-\alpha}}{\mu_1 s^\alpha + \mu_2 s^\beta}
        \exp\left(-\left|x - \ln\left( \frac{C_1}{C_1 + C_2} 
        \frac{\mu_1 s^\alpha + \mu_2 s^\beta}{\mu_1 \alpha s^{-\beta} + \mu_2 \beta s^{-\alpha}}\right)\right|\right) \\
        &\quad + \frac{C_2}{s}\, \exp(-|x - \ln s|),
    \end{aligned}
\end{equation*}
where $s=(C_1+C_2)t + C_3$, $\alpha = \frac{C_1}{C_1 + C_2}$ and $\beta = \frac{C_2}{C_1 + C_2}$, 
and $C_1,C_2,C_3 \in \mathbb{R}$ are constants arising from integration, while $\mu_1$ and $\mu_2$ are  constants not simultaneously zero. Moreover, these integration constants are  determined by the initial data $\xi_1,\xi_2,\eta_1,\eta_2$.

Additionally, the distance between the crests of the two peakons is given by
\begin{equation*}
    \tilde{q}_1(s) - \tilde{q}_2(s)
    = \ln\left( \alpha \cdot \frac{\mu_1 s^\alpha + \mu_2 s^\beta}{\mu_1 \alpha s^{\alpha}
    + \mu_2 \beta s^{\beta}} \right).
\end{equation*}

To illustrate the interaction between the two peakons and their time evolution, we plot their dynamics to better understand the behavior of the system. 
Here, we emphasize that all figures are presented in terms of the time-shifted variable $s$.

    \begin{figure}[H]
        \centering

        \begin{minipage}[t]{0.48\textwidth}
            \centering
            \includegraphics[width=\textwidth]{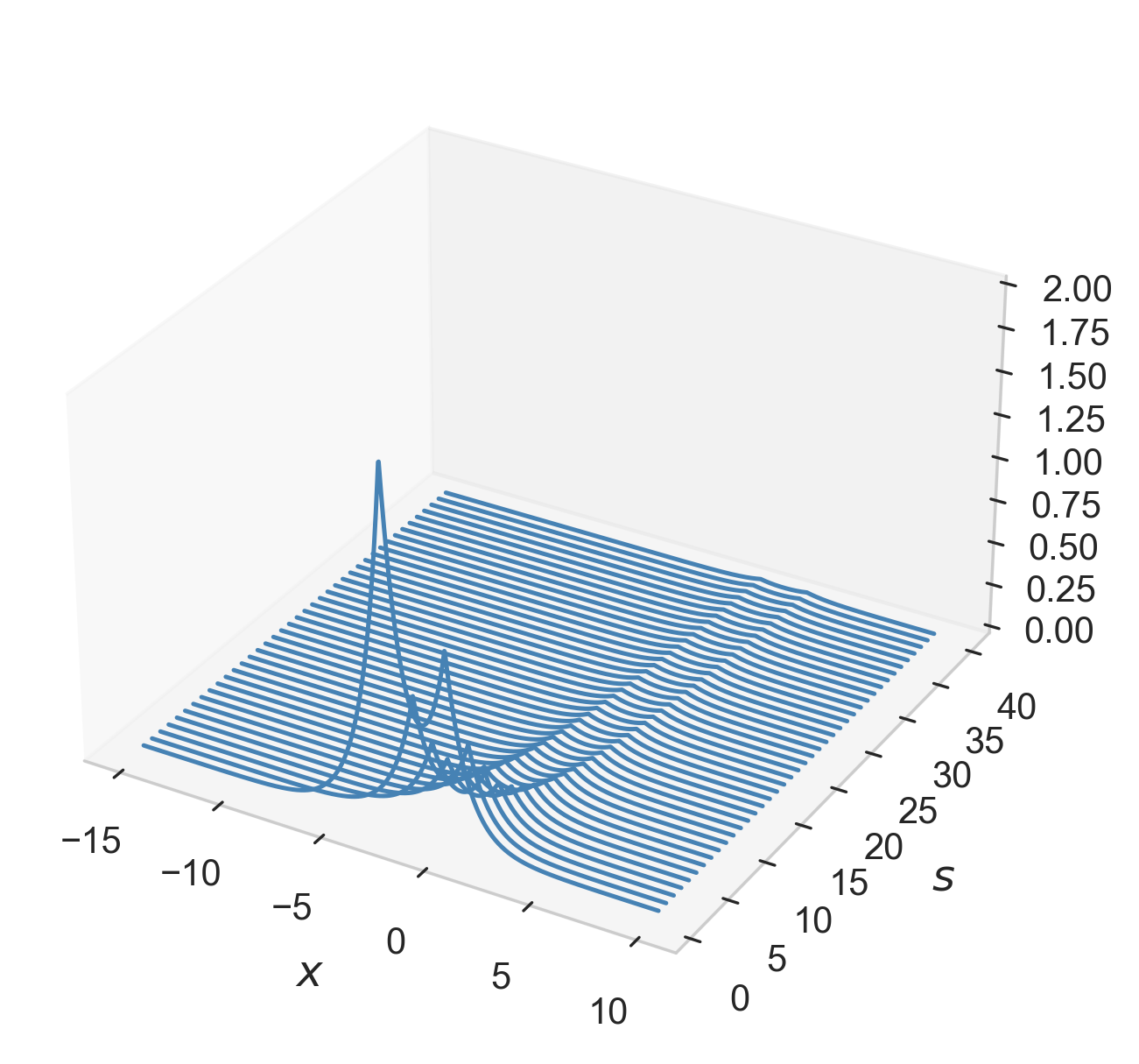}
            \textbf{(A)}
        \end{minipage}%
        \hfill
        \begin{minipage}[t]{0.48\textwidth}
            \centering
            \includegraphics[width=\textwidth]{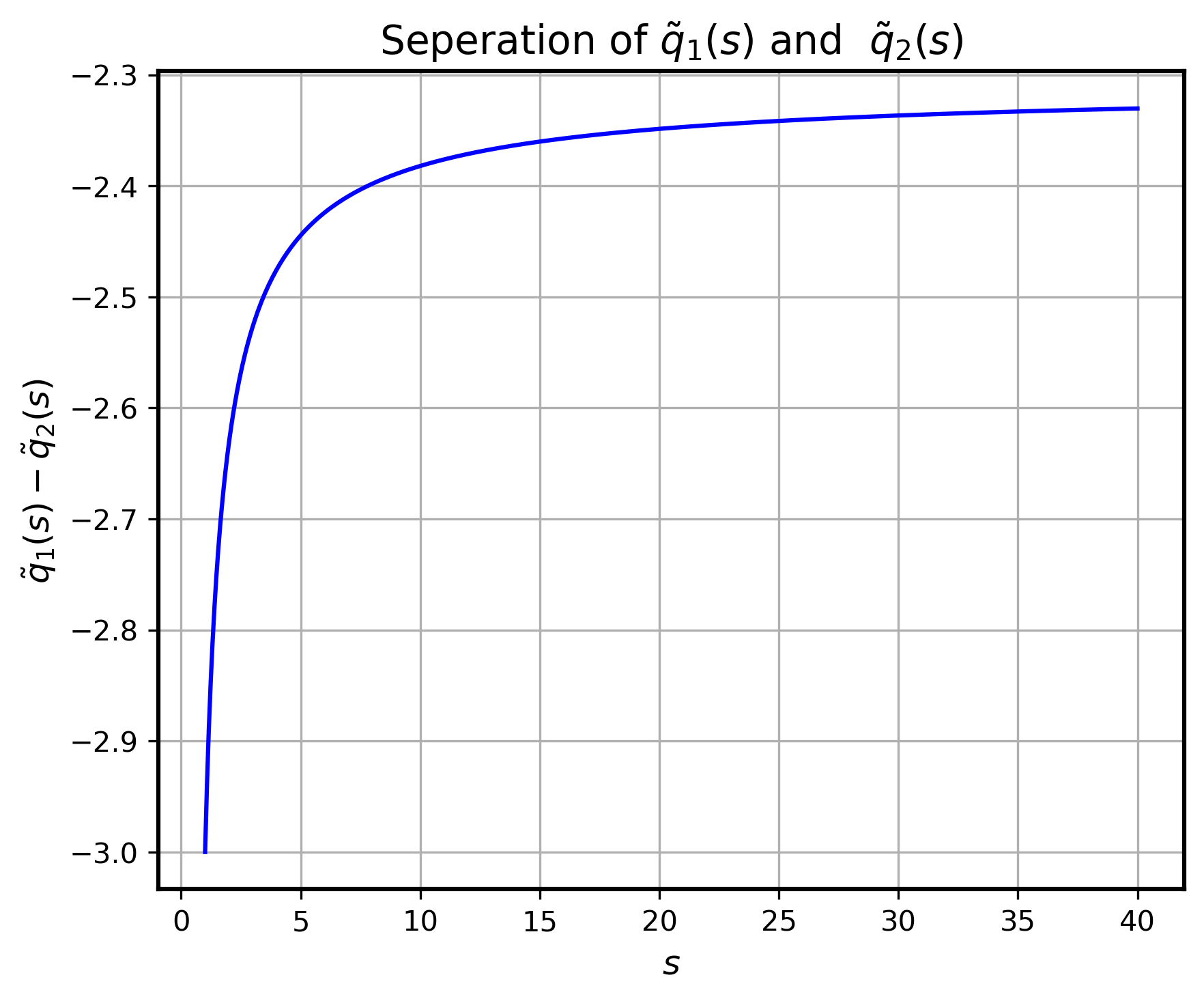}
            \textbf{(B)}
        \end{minipage}
        \caption{
            (A): Two-peakon Evolution (Positive Amplitudes) with Initial Values \(\tilde p_1(1) = 2 \), \(\tilde p_2(1) = 1 \), \(\tilde q_1(1) =
            -3 \), \(\tilde q_2(1) = 0 \), and Parameters \(\mu_1 = 1\), \(\mu_2  = -2 \left(1-\frac{1}{e^3}\right)\), \(\alpha =
            \frac{2}{e^3 + 2 }\), \( \beta = \frac{e^3}{2 + e^3}\), \( C_1 + C_2 = 1+\frac{2}{e^3}>0\).
            (B): Relative Position of Two Peakons.
        }
        \label{fig:labelpp}
    \end{figure}

 \begin{figure}[H]
        \centering
        \begin{minipage}[t]{0.48\textwidth}
            \centering
            \includegraphics[width=\textwidth]{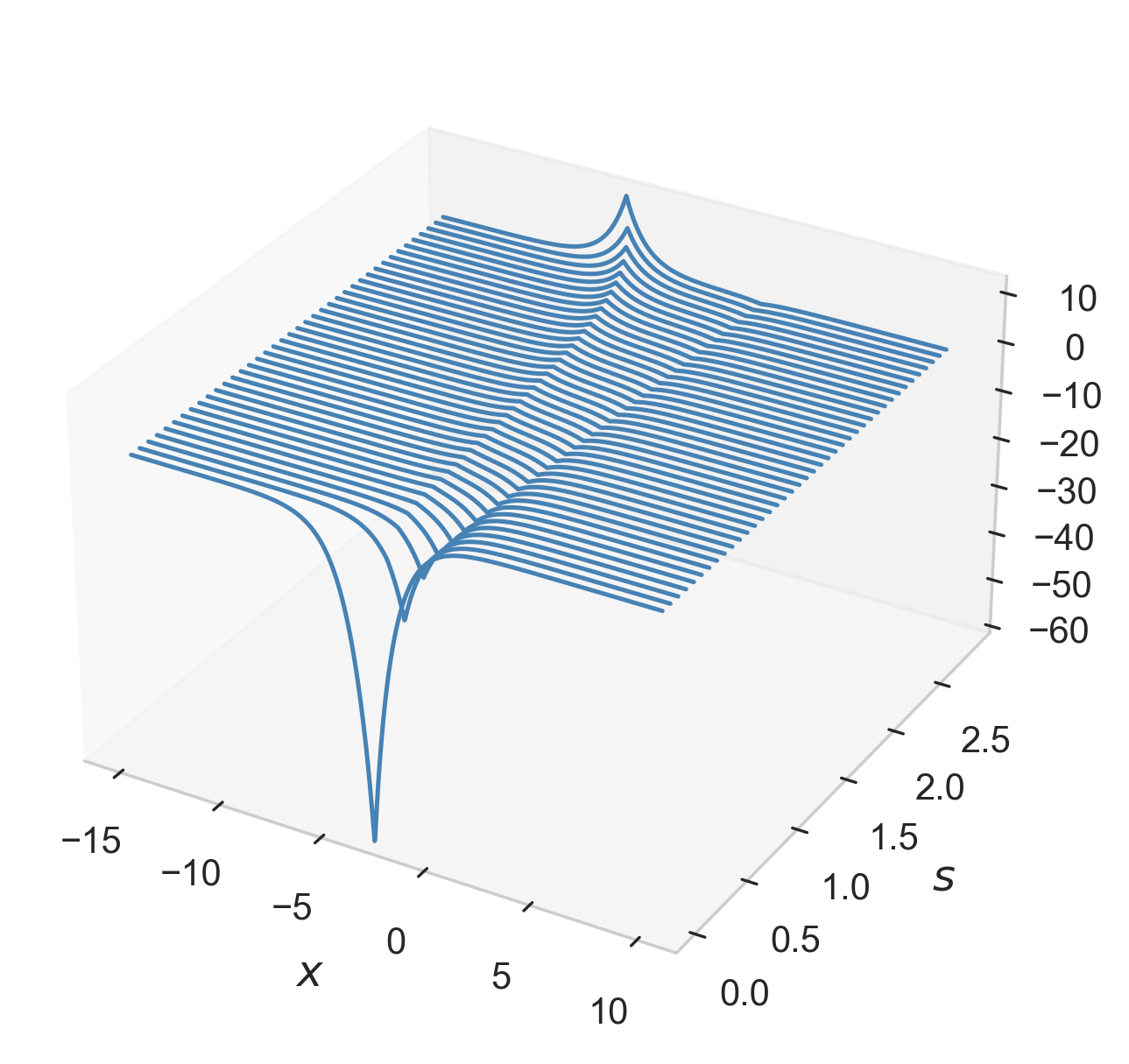}
            \textbf{(A)}
        \end{minipage}%
        \hfill
        \begin{minipage}[t]{0.48\textwidth}
            \centering
            \includegraphics[width=\textwidth]{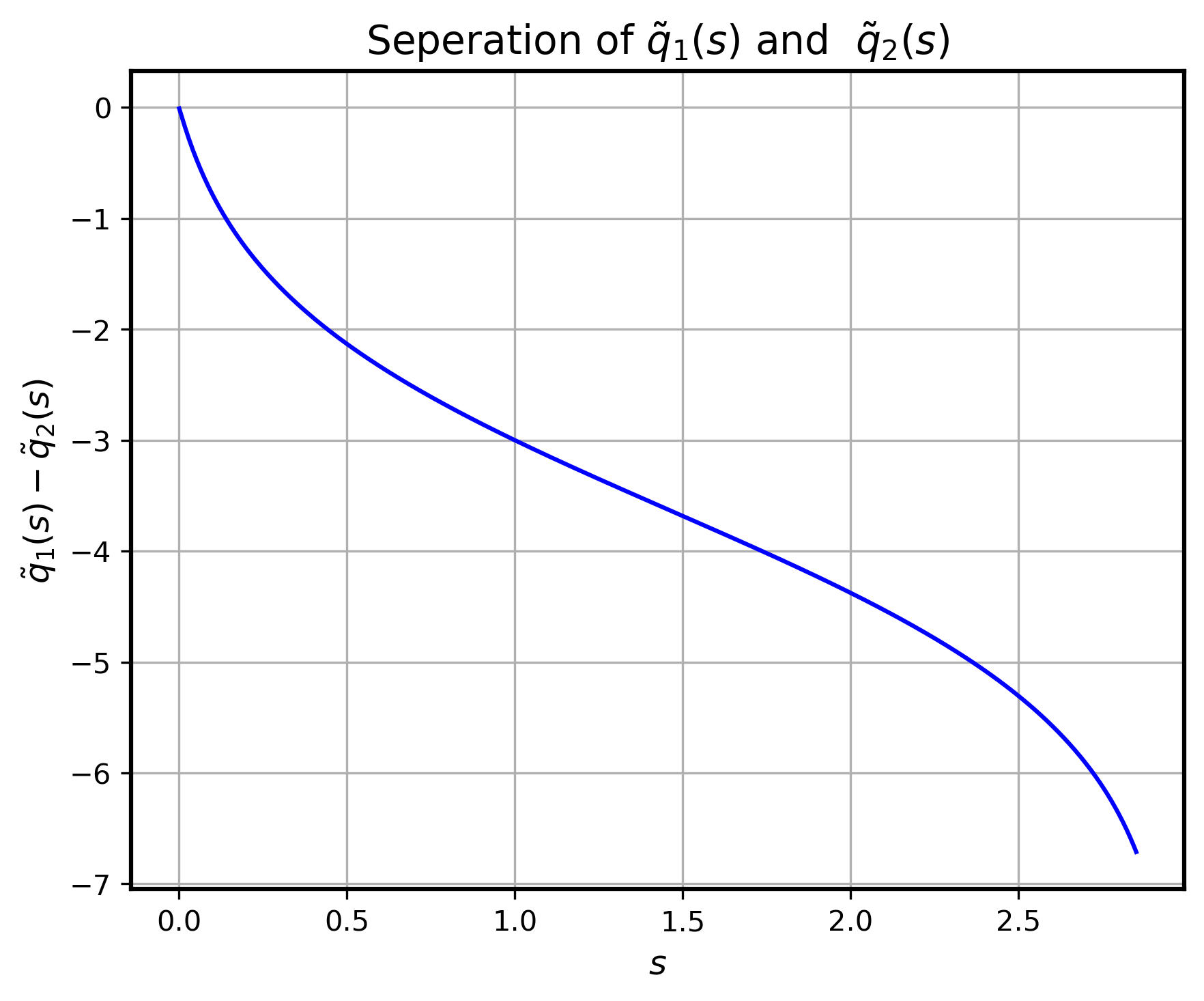}
            \textbf{(B)}
        \end{minipage}
        \caption{
            (A): Peakon-antipeakon Evolution with Initial Values \(\tilde  p_1(1) = 1 \), \(  \tilde p_2(1) = -2 \), \(\tilde  q_1(1) = -3 \), \(
          \tilde  q_2(1) = 0 \) and Parameters \(\mu_1 = 1\), \(\mu_2  = \frac{1}{3} \left(\frac{1}{e^3}-1\right)\),  \(\alpha =
            \frac{1}{1 -2e^3 }\), \( \beta = \frac{2e^3}{2e^3 - 1 }\), \( C_1 + C_2 = \frac{1}{e^3}-2 < 0\).
            (B): Relative Position of Two Peakons.}
            \label{fig:labelpm}
    \end{figure}

\begin{figure}[H]
        \centering
        \begin{minipage}[t]{0.48\textwidth}
            \centering
            \includegraphics[width=\textwidth]{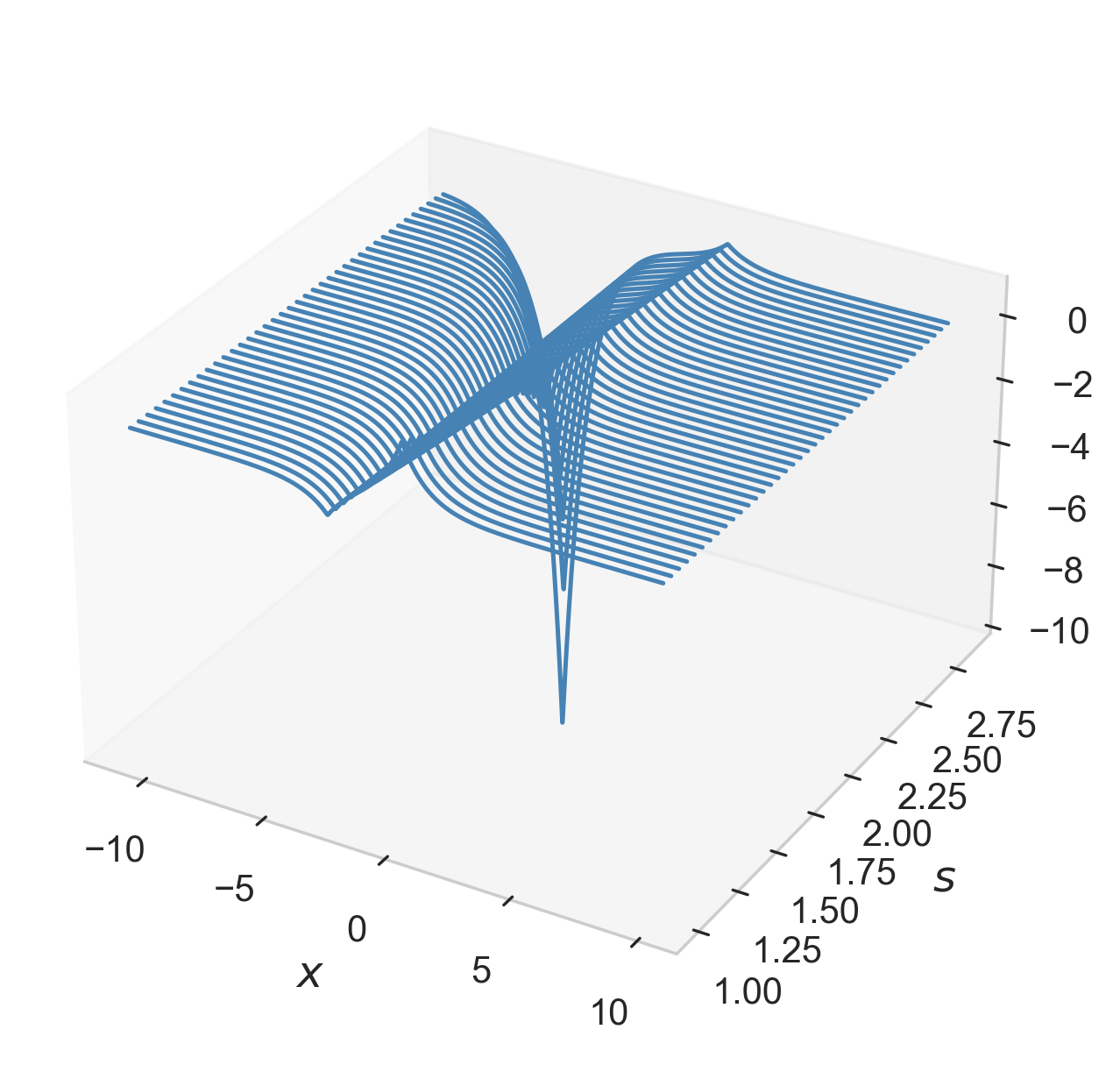}
            \textbf{(A)}
        \end{minipage}%
        \hfill
        \begin{minipage}[t]{0.48\textwidth}
            \centering
            \includegraphics[width=\textwidth]{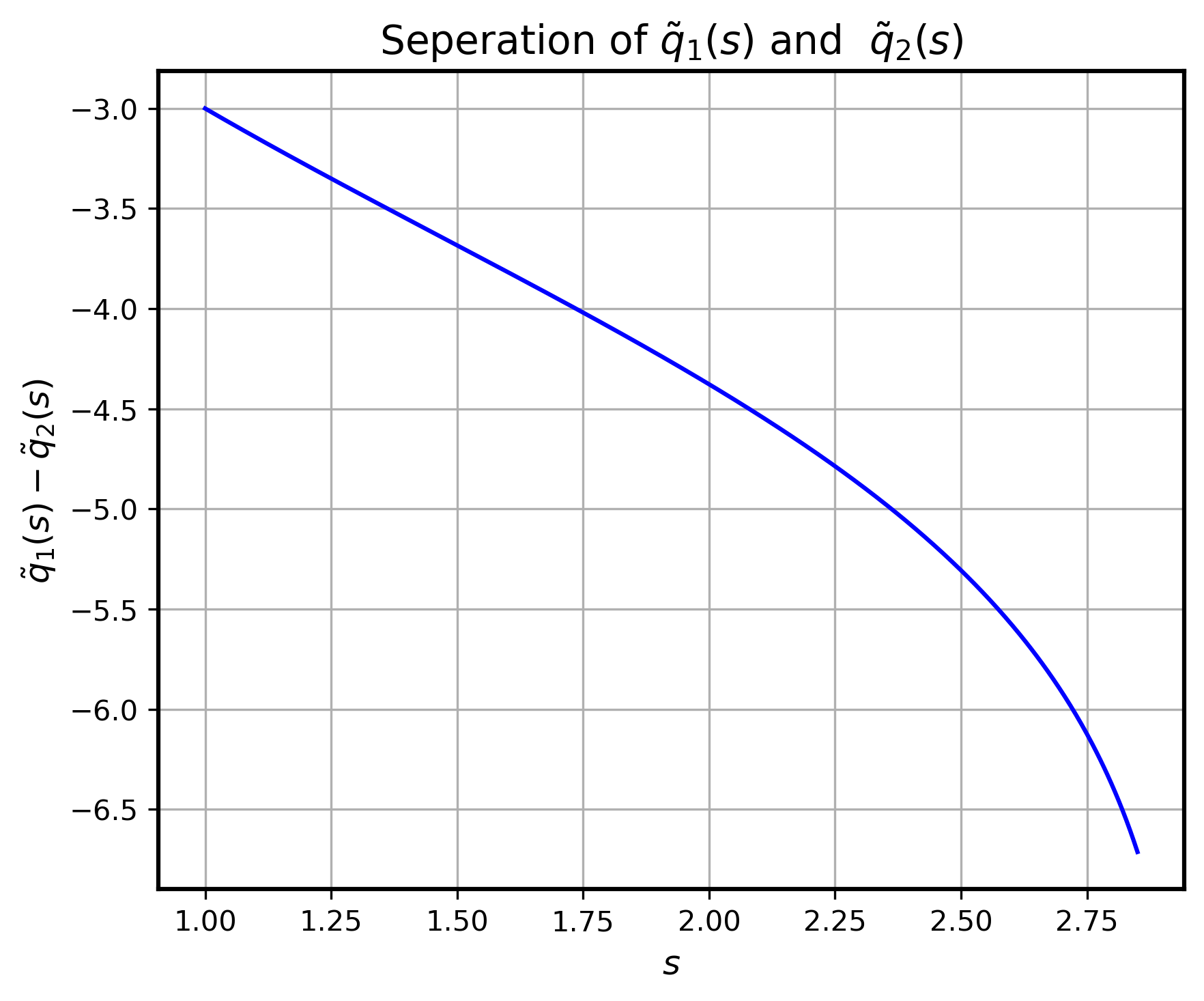}
            \textbf{(B)}
        \end{minipage}
        \caption{
            (A): Peakon-antipeakon Evolution with Initial Values \(\tilde  p_1(1) = -1 \), \( \tilde p_2(1) = 2 \), \( \tilde q_1(1) = -3 \), \(
          \tilde  q_2(1) = 0 \) and Parameters \(\mu_1 = 1\), \(\mu_2  = \frac{1}{3} \left(\frac{1}{e^3}-1\right)\),  \(\alpha =
            \frac{1}{1 -2e^3 }\), \( \beta = \frac{2e^3}{2e^3 - 1 }\), \(C_1 + C_2 = 2-\frac{1}{e^3} > 0\).
            (B): Relative Position of Two Peakons.}
            \label{fig:labelmp}
    \end{figure}

    \begin{figure}[H]
        \centering
        \begin{minipage}[t]{0.48\textwidth}
            \centering
            \includegraphics[width=\textwidth]{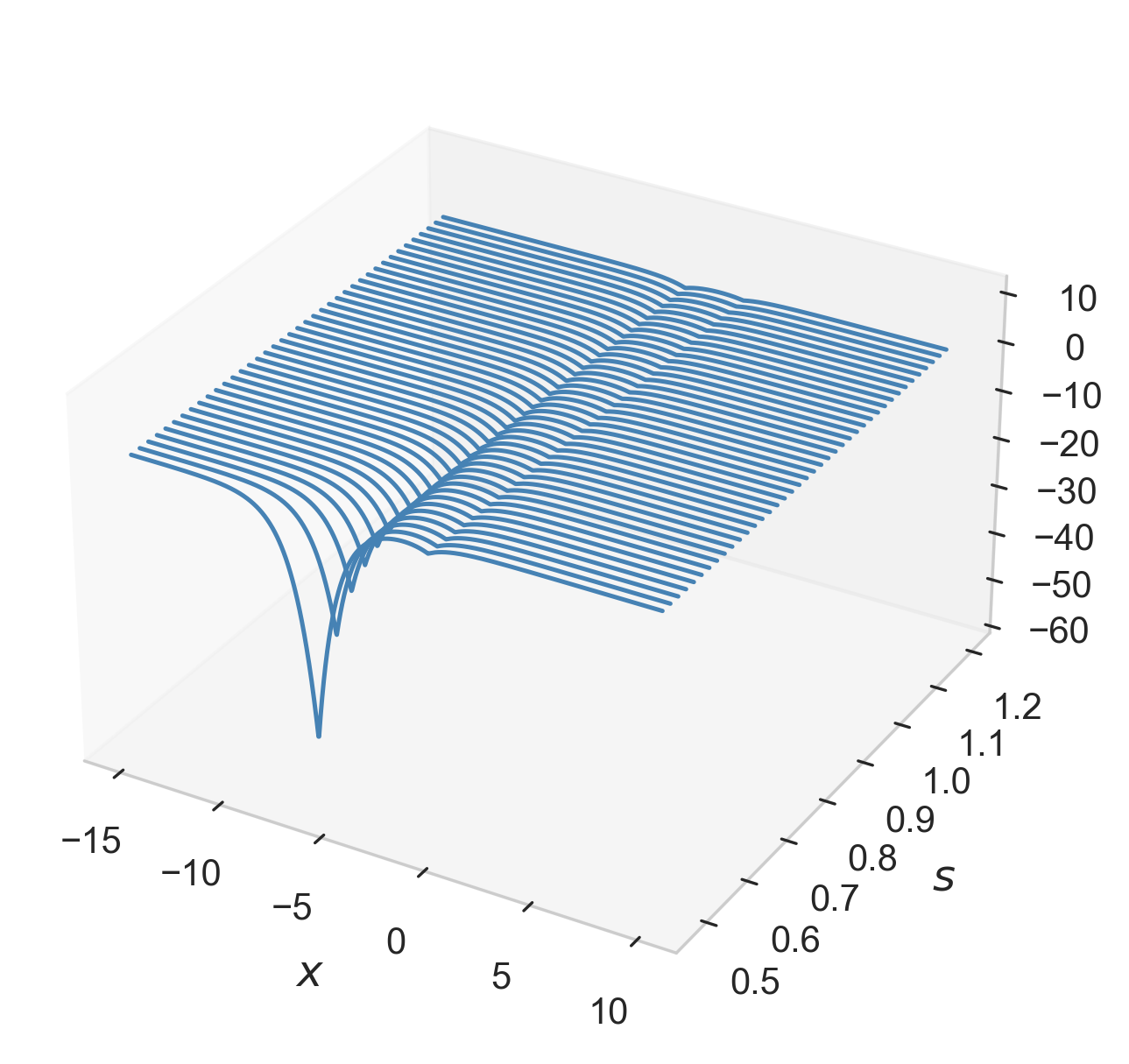}
            \textbf{(A)}
        \end{minipage}%
        \hfill
        \begin{minipage}[t]{0.48\textwidth}
            \centering
            \includegraphics[width=\textwidth]{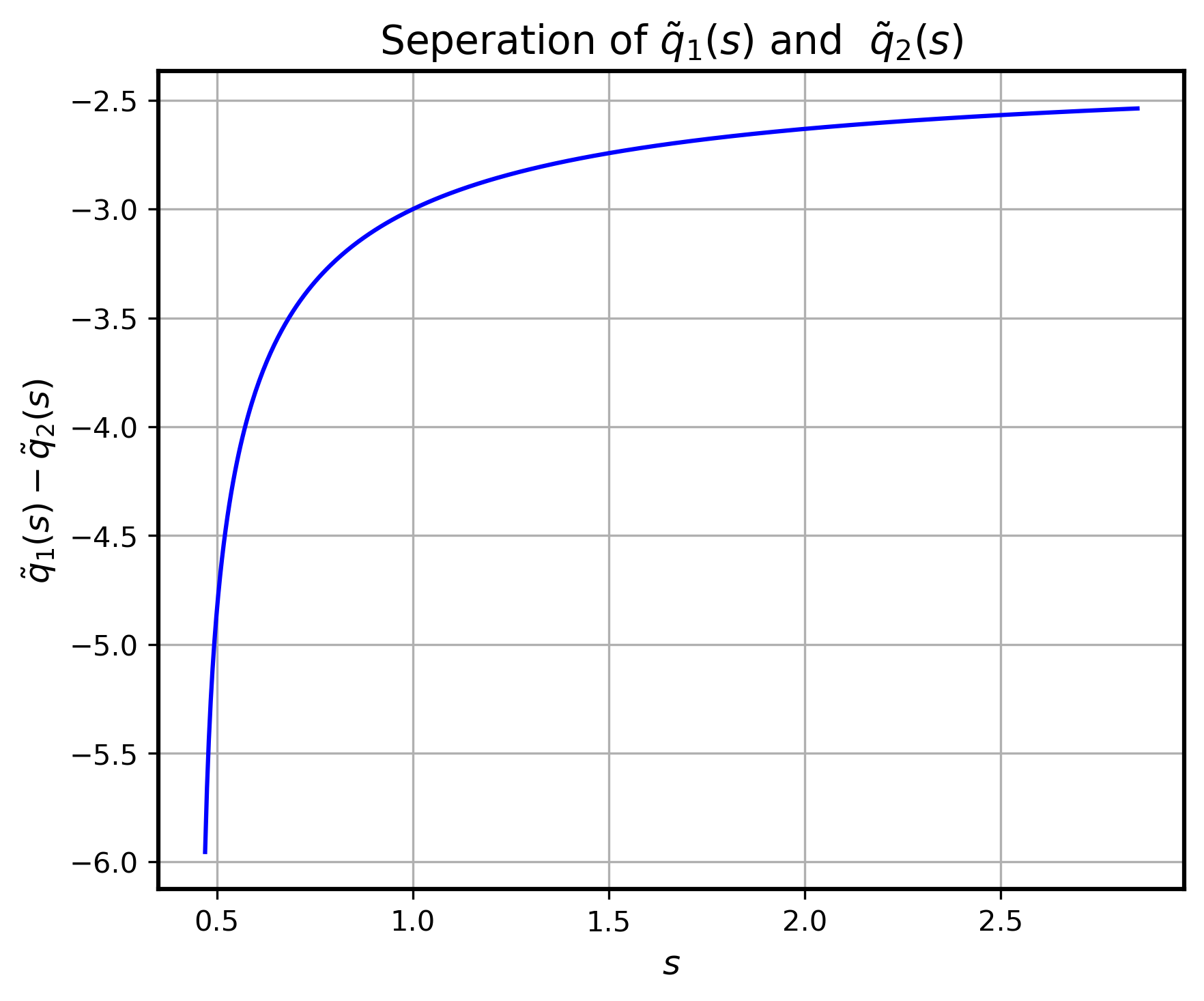}
            \textbf{(B)}
        \end{minipage}
        \caption{
            (A): Two-peakon evolution (Negative Amplitudes) with Initial Values \(  \tilde p_1(1) = -2 \), \( \tilde p_2(1) = -1 \), \(
          \tilde  q_1(1) = -3 \), \( \tilde q_2(1) = 0 \) and Parameters \(\mu_1 = 1\), \(\mu_2  =-2 \left(1-\frac{1}{e^3}\right)\),
            \(\alpha = \frac{2}{e^3 + 2 }\), \( \beta = \frac{e^3}{2 + e^3}\) ,\(C_1 + C_2 = -1-\frac{2}{e^3} < 0\).
            (B): Relative Position of Two Peakons.}
            \label{fig:labelmm}
    \end{figure}

\section{Analytical properties for the Cauchy problem}
In this section, we consider the Cauchy problem corresponding to equation \eqref{Nonlocallambda}, which is given by
\begin{equation}\label{Cauchyp}
\left\{
    \begin{aligned}
        &u_t + \lambda_1 u^2 + \lambda_2 u u_x + G\ast(\lambda_3 u^2 + \lambda_4 u_x^2) + \partial_x \left(  G \ast (\lambda_5 u^2 + \lambda_6 u_x^2) \right) = 0, \quad x\in\mathbb{R},\ t>0, \\
        &u(x,0)=u_0, \quad x\in\mathbb{R}.
    \end{aligned}
\right.
\end{equation}

Applying the Helmholtz operator \(1-\partial_x^2\), we can rewrite equation \eqref{Cauchyp} into an equivalent local form:
\begin{equation}\label{em}
m_t+(3\lambda_2-2\lambda_6)u_xm+\lambda_2um_x+2\lambda_1um+(\lambda_3-\lambda_1)u^2
+(2\lambda_6+2\lambda_5-3\lambda_2)uu_x+(\lambda_4 -2\lambda_1)u_x^2=0,
\end{equation}
where $m = u - u_{xx}$.

\subsection{Local well-posedness and blow-up scenario}
The main objective of this subsection is to establish the local well-posedness of the Cauchy problem \eqref{Cauchyp} in Besov spaces. To this end, we first introduce the following function spaces.
\begin{definition}\label{besov}\emph{\cite{BCD2011,Chemin1998}}
For $T>0$, $s\in\mathbb{R}$ and $1\leq p\leq +\infty$, we set
\begin{eqnarray*}
&&E_{p,r}^s(T)=:C([0, T];B_{p,r}^{s})\cap C^1([0, T];B_{p,r}^{s-1}),\quad if~~ r<\infty,\\
&&E_{p,\infty}^s(T)=:L^\infty([0, T];B_{p,\infty}^{s})\cap Lip([0, T];B_{p,\infty}^{s-1}),
\end{eqnarray*}
and \begin{eqnarray*} E_{p,r}^s=:\bigcap_{T>0}E_{p,r}^s(T).\end{eqnarray*}
\end{definition}

Our main result concerning the local existence is stated in the following theorem.
\begin{theorem}\label{LSS}
Suppose that $1\leq p,r\leq\infty$ and $s>\max \left\{1+\frac{1}{p},\frac{3}{2}\right\}$. If
$u_0\in B_{p,r}^{s}$, then there exists a time $T>0$ such that the Cauchy problem \eqref{Cauchyp} with initial data $u_0$ has a unique
solution $u\in E_{p,r}^s(T)$. The map $u_0\mapsto u$ is continuous from a neighborhood of $u_0$ in $B_{p,r}^{s}$ to
$$C([0, T];B_{p,r}^{s'})\cap C^1([0, T];B_{p,r}^{s'-1})$$
for every $s'<s$ when $r=+\infty$ and $s'=s$ when $r<+\infty$.
\end{theorem}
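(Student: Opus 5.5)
We will use the standard Littlewood--Paley/transport-equation scheme of Danchin, working with the nonlocal form of \eqref{Cauchyp} written as a transport equation
\begin{equation*}
\partial_t u + \lambda_2 u\,\partial_x u = F(u), \qquad F(u) := -\lambda_1 u^2 - G\ast(\lambda_3 u^2+\lambda_4 u_x^2) - \partial_x G\ast(\lambda_5 u^2+\lambda_6 u_x^2).
\end{equation*}
The key observation is that $F$ is a source term of order zero in $u$. The operator $G\ast = (1-\partial_x^2)^{-1}$ is an $S^{-2}$ multiplier and $\partial_x G\ast$ is $S^{-1}$. Hence for $s>\max\{1+\frac1p,\frac32\}$ we obtain $\|F(u)\|_{B^s_{p,r}}\lesssim \|u\|_{B^s_{p,r}}^2$. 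For the term $\lambda_1 u^2$ this uses that $B^s_{p,r}$ is an algebra, since $s>1/p$. For $u_x^2\in B^{s-1}_{p,r}$ it uses that $B^{s-1}_{p,r}$ is an algebra when $s-1>1/p$. The condition $s>3/2$ is what makes the product estimates work when $p>2$ and the embedding into $L^\infty$ is not enough. We also obtain the difference estimate
\begin{equation*}
\|F(u)-F(v)\|_{B^{s-1}_{p,r}}\lesssim (\|u\|_{B^s_{p,r}}+\|v\|_{B^s_{p,r}})\|u-v\|_{B^{s-1}_{p,r}},
\end{equation*}
using the product estimate in $B^{s-1}_{p,r}\times B^{s-2}_{p,r}$, which is valid because $s-1>\max\{1/p,1/2\}$.

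\textbf{Construction of approximate solutions.} Set $u^0=0$ and define $u^{n+1}$ as the solution of the linear transport equation
\begin{equation*}
\partial_t u^{n+1}+\lambda_2 u^n\partial_x u^{n+1}=F(u^n), \qquad u^{n+1}(0)=S_{n+1}u_0.
\end{equation*}
The a priori estimate for transport equations in Besov spaces (with Lipschitz regularity of $u^n$ controlled by $\|u^n\|_{B^s_{p,r}}$, since $s>1+1/p$) gives
\begin{equation*}
\|u^{n+1}(t)\|_{B^s_{p,r}}\le e^{C\int_0^t\|u^n\|_{B^s_{p,r}}}\Big(\|u_0\|_{B^s_{p,r}}+C\int_0^t e^{-C\int_0^\tau\|u^n\|}\|u^n\|_{B^s_{p,r}}^2\,d\tau\Big).
\end{equation*}
Choosing $T<1/(4C\|u_0\|_{B^s_{p,r}})$ and arguing by induction, we get the uniform bound $\|u^n\|_{L^\infty_T B^s_{p,r}}\le 2\|u_0\|_{B^s_{p,r}}$. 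Next we show that $(u^n)$ is Cauchy in $C([0,T];B^{s-1}_{p,r})$. We estimate $u^{n+m+1}-u^{n+1}$ by the transport estimate at regularity $s-1$, using the difference bound for $F$ and the bound on $(u^{n+m}-u^n)\partial_x u^{n+m+1}$. This yields a recursion whose iteration gives a geometric decay $C_T 2^{-n}$ as $T$ is fixed.

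\textbf{Passage to the limit.} The limit $u$ lies in $L^\infty_TB^s_{p,r}$ by Fatou's property and solves \eqref{Cauchyp}. Continuity in time in $B^s_{p,r}$ for $r<\infty$ follows from the transport theory applied to the limiting linear equation. The $C^1$ (respectively Lipschitz) regularity in $B^{s-1}_{p,r}$ is read off from the equation. Uniqueness and continuous dependence in $B^{s-1}_{p,r}$ follow from the same difference estimate together with Gronwall's inequality. Interpolation then upgrades this to $B^{s'}_{p,r}$ for $s'<s$, which covers the case $r=\infty$.

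\textbf{Main obstacle.} The hardest step is continuity of the data-to-solution map in the top norm $B^s_{p,r}$ when $r<\infty$. We plan to use the Bona--Smith type argument of Danchin. Given $u_0^k\to u_0$, we split
\begin{equation*}
u^k - u = (u^k - u^k_\epsilon) + (u^k_\epsilon - u_\epsilon) + (u_\epsilon - u),
\end{equation*}
where $u_\epsilon$ and $u^k_\epsilon$ are the solutions with mollified data. We then bound the first and last terms uniformly in $k$ using the linear transport continuity lemma, which states that the solution of a transport equation depends continuously on the velocity field in $L^1_T B^{s-1}_{p,r}$. The middle term converges for each fixed $\epsilon$. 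The delicate point is verifying that the source terms $F(u^k)$ converge in $L^1_TB^s_{p,r}$. This works because $F$ loses no derivatives, owing to the smoothing of $G$.
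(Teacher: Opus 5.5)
The paper gives no proof of its own and defers to Danchin's scheme. Your existence, uniqueness and $B^{s'}_{p,r}$-continuity steps reproduce that scheme correctly. The product-law bookkeeping is also right: $B^{s-1}_{p,r}$ is an algebra for $s>1+\frac1p$, and $s>\frac32$ enters through $B^{s-1}_{p,r}\times B^{s-2}_{p,r}\to B^{s-2}_{p,r}$. One small slip: $T<1/(4C\|u_0\|_{B^s_{p,r}})$ does not literally close the induction at $2\|u_0\|_{B^s_{p,r}}$, but a smaller multiple does.

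\textbf{The gap.} It lies in the step you flag yourself, continuity of $u_0\mapsto u$ into $C([0,T];B^s_{p,r})$ for $r<\infty$, and the argument as written does not work. The continuity lemma for transport equations is qualitative and works one derivative below the velocity. If $a^k\to a$ in $L^1_TB^{s-1}_{p,r}$ with $\partial_x a^k$ bounded in $L^1_TB^{s-1}_{p,r}$, then solutions with the \emph{same} data and the \emph{same} forcing converge in $C([0,T];B^{s-1}_{p,r})$. This has three consequences for your plan:
\begin{itemize}
\item It gives no bound, uniform in $k$, on $u^k-u^k_\epsilon$, whose data and forcing both vary.
\item It cannot be applied to $u^k$ at the level $B^s_{p,r}$, since that would need $\lambda_2u^k$ bounded in $B^{s+1}_{p,r}$.
\item The last term $u_\epsilon-u$ is itself an instance of the continuity you are trying to prove.
\end{itemize}
The point you call delicate is also circular. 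The pieces $\lambda_1((u^k)^2-u^2)$ and $\lambda_6\,\partial_xG\ast((u^k_x)^2-u_x^2)$ of $F(u^k)-F(u)$ are controlled in $B^s_{p,r}$ only by $\|u^k-u\|_{B^s_{p,r}}$, which is exactly what must be shown to vanish. Zero-order sources are harmless inside a Gronwall loop. The real obstruction is the transport difference $\lambda_2(u^k-u)\partial_xu$, which loses a derivative in $B^s_{p,r}$.

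\textbf{Fix via the differentiated equation (Danchin's route).} The argument in the references the paper cites differentiates the equation first. Then $v=u_x$ solves $\partial_tv+\lambda_2u\,\partial_xv=H(u)$ with
$H(u)=(\lambda_6-\lambda_2)u_x^2-2\lambda_1uu_x+\lambda_5u^2-\partial_xG\ast(\lambda_3u^2+\lambda_4u_x^2)-G\ast(\lambda_5u^2+\lambda_6u_x^2)$,
which is Lipschitz from bounded sets of $B^s_{p,r}$ into $B^{s-1}_{p,r}$. Split $\partial_xu^k=z^k+w^k$.
\begin{itemize}
\item $z^k$ solves the transport equation with velocity $\lambda_2u^k$, frozen data $\partial_xu_0$ and frozen forcing $H(u)$. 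The continuity lemma applies because $u^k$ is bounded in $B^s_{p,r}$ on a common interval and converges in $B^{s-1}_{p,r}$, so $z^k\to\partial_xu$ in $C([0,T];B^{s-1}_{p,r})$.
\item $w^k$ carries the data $\partial_x(u^k_0-u_0)$ and the forcing $H(u^k)-H(u)$. Combine the transport estimate with $\|H(u^k)-H(u)\|_{B^{s-1}_{p,r}}\lesssim\|u^k-u\|_{B^{s-1}_{p,r}}+\|w^k\|_{B^{s-1}_{p,r}}+\|z^k-\partial_xu\|_{B^{s-1}_{p,r}}$ and apply Gronwall to get $w^k\to0$.
\end{itemize}

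\textbf{Fix via Bona--Smith.} If you prefer to keep your splitting, drop the continuity lemma and estimate $u^k-u^k_\epsilon$ directly in $B^s_{p,r}$ from its difference equation. The only dangerous contribution of $(u^k-u^k_\epsilon)\partial_xu^k_\epsilon$ is
$\|u^k-u^k_\epsilon\|_{L^\infty}\|u^k_\epsilon\|_{B^{s+1}_{p,r}}\lesssim\epsilon^{-1}\|u^k_0-J_\epsilon u^k_0\|_{B^{s-1}_{p,r}}$,
where $J_\epsilon$ is the mollifier. This is $o(1)$ uniformly in $k$, because $\{u^k_0\}$ is relatively compact in $B^s_{p,r}$ and $r<\infty$. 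The step requires persistence of $B^{s+1}_{p,r}$ regularity on the common interval, with $\|u^k_\epsilon\|_{L^\infty_TB^{s+1}_{p,r}}\lesssim\epsilon^{-1}$. The middle term $u^k_\epsilon-u_\epsilon$ then follows from the $B^s_{p,r}$ Lipschitz estimate, which is available for solutions bounded in $B^{s+1}_{p,r}$.
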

\begin{remark}
When $p=r=2$, the Besov space $B_{2,2}^{s}$ is equivalent to the Sobolev
space $H^s$. Theorem \ref{LSS} implies that
we can obtain the local well-posedness for the Cauchy problem \eqref{Cauchyp} in the Sobolev space under the condition $u_0\in H^s$ with $s>\frac{3}{2}$.
\end{remark}
Additionally, we can establish the following local well-posedness result in the critical Besov space $B_{2,1}^{3/2}$.
\begin{theorem}\label{LSS1}
Let $u_0\in B_{2,1}^{3/2}$, there exists a time $T>0$ such that the Cauchy problem (1.1) has a unique
solution $u\in E_{2,1}^{3/2}(T)$. The map $u_0\mapsto u$ is continuous from a neighborhood of $u_0$ in $B_{2,1}^{3/2}$ to
$C([0, T];B_{2,1}^{3/2})\cap C^1([0, T];B_{2,1}^{1/2}).$
\end{theorem}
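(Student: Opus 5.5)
We will treat \eqref{Cauchyp} as a transport equation with a nonlocal, lower-order forcing:
\[
u_t+\lambda_2 u u_x = F(u),\qquad F(u):=-\lambda_1u^2-G\ast(\lambda_3u^2+\lambda_4u_x^2)-\partial_x G\ast(\lambda_5u^2+\lambda_6u_x^2),
\]
and follow the standard Friedrichs/iteration scheme in Besov spaces (as in Danchin's treatment of Camassa--Holm), specialized to the critical index $s=3/2$, $p=2$, $r=1$. The key facts are the following. First, $B_{2,1}^{1/2}\hookrightarrow L^\infty$ and $B_{2,1}^{1/2}$ is an algebra. Hence $u_x^2\in B^{1/2}_{2,1}$ whenever $u\in B_{2,1}^{3/2}$. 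Since $G\ast$ and $\partial_xG\ast$ are $S^{-2}$ and $S^{-1}$ multipliers, we get $\|F(u)\|_{B_{2,1}^{3/2}}\lesssim \|u\|_{B_{2,1}^{3/2}}^2$. The term $\lambda_1u^2$ is also fine, because $B_{2,1}^{3/2}$ is an algebra. Second, the transport estimate for $f_t+v f_x=g$ in $B_{2,1}^{\sigma}$ with $-1/2<\sigma\le 3/2$ holds with the borderline Lipschitz-type control $\int_0^t\|v_x\|_{B_{2,1}^{1/2}}$, which is exactly what $r=1$ permits.

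\textbf{Step 1 (approximate solutions and uniform bounds).} Set $u^0=0$ and solve the linear transport problems $\partial_t u^{n+1}+\lambda_2u^n\partial_xu^{n+1}=F(u^n)$ with $u^{n+1}(0)=S_{n+1}u_0$. The transport estimate gives
\[
\|u^{n+1}(t)\|_{B_{2,1}^{3/2}}\le e^{C\int_0^tU_n}\Big(\|u_0\|_{B_{2,1}^{3/2}}+C\int_0^te^{-C\int_0^\tau U_n}U_n^2\,d\tau\Big),\qquad U_n=\|u^n\|_{B_{2,1}^{3/2}}.
\]
Choosing $T\simeq(2C\|u_0\|_{B_{2,1}^{3/2}})^{-1}$, induction yields the uniform bound $U_n\le 2\|u_0\|_{B_{2,1}^{3/2}}$ on $[0,T]$. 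The equation then bounds $\partial_tu^n$ in $B^{1/2}_{2,1}$.

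\textbf{Step 2 (Cauchy property and passage to the limit).} We estimate the differences $u^{n+m+1}-u^{n+1}$ in the weaker space $B_{2,\infty}^{1/2}$, or in $B^{1/2}_{2,1}$. This is the main obstacle. At regularity $1/2$ the term $(u^{n+m}-u^n)\partial_xu^{n+1}$ requires a product estimate $B^{1/2}_{2,1}\times B^{1/2}_{2,1}\to B^{1/2}_{2,1}$, which holds, so working in $B^{1/2}_{2,1}$ is legitimate. The nonlocal term needs $\|G\ast(fg)\|_{B^{1/2}_{2,1}}\lesssim\|f\|_{B^{1/2}_{2,1}}\|g\|_{B^{1/2}_{2,1}}$ with $f=\partial_x(u^{n+m}-u^n)\in B^{-1/2}_{2,1}$. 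For this we will use the product law $B^{-1/2}_{2,\infty}\times B^{1/2}_{2,1}\to B^{-1/2}_{2,\infty}$ together with the two-derivative gain of $G$. A Gronwall argument then gives a Cauchy sequence in $C([0,T];B^{1/2}_{2,1})$. Interpolation with the uniform bound and Fatou's property give a limit $u\in L^\infty(0,T;B^{3/2}_{2,1})$ solving \eqref{Cauchyp}, and $u\in C([0,T];B^{3/2}_{2,1})$ follows from the transport theory with $r<\infty$.

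\textbf{Step 3 (uniqueness and continuous dependence).} Uniqueness follows from the same difference estimate at level $1/2$. It is here that the borderline case would fail for $r=\infty$, and we rely on $r=1$ so that no logarithmic loss occurs. For continuity of the data-to-solution map in $B^{3/2}_{2,1}$ itself we use the Bona--Smith-type argument for transport equations (Danchin's continuity lemma). The solutions $u^k$ with data $u_0^k\to u_0$ satisfy linear transport equations with velocities $\lambda_2u^k\to\lambda_2u$ in $C([0,T];B^{1/2}_{2,1})$ and forcings $F(u^k)$ bounded in $L^\infty(0,T;B^{3/2}_{2,1})$. We split $\partial_xu^k$ into a part converging strongly and a remainder controlled by the high-frequency tails of $u_0^k$. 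Continuity in $C^1([0,T];B^{1/2}_{2,1})$ then follows from the equation.
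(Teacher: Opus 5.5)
Your Step 1 is fine, but the difference estimate in Step 2 does not close as claimed, and Step 3's uniqueness argument fails with it. You checked only the nonlocal term $G\ast(\lambda_4u_x^2)$, which gains two derivatives. The term $\partial_xG\ast(\lambda_6u_x^2)$ gains only one, and $\lambda_6\neq0$ already for Camassa--Holm. Set $\delta=u^{n+m}-u^n$ and $g=\partial_x(u^{n+m}+u^n)$. The forcing contains $\lambda_6\,\partial_xG\ast(\partial_x\delta\,g)$. Your product law puts $\partial_x\delta\,g$ only in $B^{-1/2}_{2,\infty}$, so this term lies only in $B^{1/2}_{2,\infty}$, which does not embed into $B^{1/2}_{2,1}$.

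This loss is genuine: the Bony remainder $R(\partial_x\delta,g)$ has total regularity $0$ and is not $\ell^1$-summable in general. For example, $f=2^Q\psi(2^Qx)\cos(2^{Q+3}x)$ and $g=\psi(2^Qx)\cos(2^{Q+3}x)$ have $O(1)$ norms in $B^{-1/2}_{2,1}$ and $B^{1/2}_{2,1}$. But $fg$ contains $\tfrac12 2^Q\psi^2(2^Qx)$, whose image under $\partial_xG\ast$ has $B^{1/2}_{2,1}$-norm of order $Q$.

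Retreating to $B^{1/2}_{2,\infty}$ repairs this term but breaks the transport forcing $\delta\,\partial_xu^{n+1}$. Since $B^{1/2}_{2,\infty}\not\hookrightarrow L^\infty$, the paraproduct $T_\delta\,\partial_xu^{n+1}$ loses a logarithm. To see this, take $\delta=\chi(x)\ln|x|$ against the packet $g$ above, on whose support $|\delta|\sim Q$. So no Gronwall estimate holds at level $1/2$ in either space, and your remark that for $r=1$ ``no logarithmic loss occurs'' is incorrect.

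The missing ingredient is the critical-case argument of \cite{Dan2003}, to which the paper defers. First, estimate differences in $B^{1/2}_{2,\infty}$. Bound every forcing term there by $C\|\delta\|_{B^{1/2}_{2,1}}$, using $B^{1/2}_{2,1}\hookrightarrow L^\infty$ for $T_\delta\partial_xu^{n+1}$ and your product law for the nonlocal terms. Then apply the logarithmic interpolation inequality $\|f\|_{B^{1/2}_{2,1}}\le C\|f\|_{B^{1/2}_{2,\infty}}\ln\bigl(e+\|f\|_{B^{3/2}_{2,\infty}}/\|f\|_{B^{1/2}_{2,\infty}}\bigr)$ together with the uniform bound of Step 1.

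For two solutions $u,v$ with $X=\|u-v\|_{B^{1/2}_{2,\infty}}$, this gives $X(t)\le CX(0)+C\int_0^tX\ln(e+C/X)\,d\tau$. Osgood's lemma, not Gronwall, then yields $X(t)\lesssim X(0)^{e^{-Ct}}$. That is enough for uniqueness, and for convergence of your scheme after taking $\limsup_n$ (or by compactness). Interpolating with the uniform bound also gives the convergence $u^k\to u$ in $C([0,T];B^{1/2}_{2,1})$ that your continuity step presupposes.

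What $r=1$ actually buys is the Lipschitz bound $\|u_x\|_{L^\infty}\lesssim\|u\|_{B^{3/2}_{2,1}}$. This is required by the $B^{1/2}_{2,\infty}$ transport estimate and by the paraproduct $T_{\partial_xv}\delta$. It does not remove the logarithmic loss; Osgood's lemma is what absorbs it.
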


To ensure the conciseness of this paper, we omit the detailed proofs of Theorems \ref{LSS} and \ref{LSS1}, as similar proofs can be found in the literature for the Camassa-Holm equations \cite{Dan2001,Dan2003} or for CH-type equations with rogue peakons \cite{zhu2023rogue}.

Before going to the main blow-up scenario \cite{chen2010blowup}, let us introduce two useful Kato-Ponce inequalities.
\begin{lemma}\emph{\cite{KP1988CPAM}}
If $r>0$, then $H^r\cap L^\infty$ is an algebra. Moreover
\begin{equation}\label{KP1}
\|fg\|_{H^r}\leq c(\|f\|_{L^\infty}\|g\|_{H^r}+\|f\|_{H^r}\|g\|_{L^\infty}),
\end{equation}
where $c$ is a constant depending only on $r$.
\end{lemma}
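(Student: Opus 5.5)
We prove the product estimate \eqref{KP1} with a Littlewood--Paley decomposition and Bony's paraproduct splitting. Write $\Delta_j$ for the dyadic blocks and $S_j=\sum_{k\le j-1}\Delta_k$, so that
\begin{equation*}
fg = T_f g + T_g f + R(f,g),\qquad T_f g=\sum_j S_{j-1}f\,\Delta_j g,\qquad R(f,g)=\sum_{|j-k|\le 1}\Delta_j f\,\Delta_k g .
\end{equation*}
We use the equivalent norm $\|h\|_{H^r}^2\simeq \|h\|_{L^2}^2+\sum_{j\ge 0}2^{2jr}\|\Delta_j h\|_{L^2}^2$, i.e. $H^r=B^r_{2,2}$. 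Once \eqref{KP1} holds, the algebra property of $H^r\cap L^\infty$ follows at once, since $\|fg\|_{L^\infty}\le\|f\|_{L^\infty}\|g\|_{L^\infty}$ and the right-hand side of \eqref{KP1} is finite.

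\textbf{Paraproduct terms.} For $T_f g$, the spectrum of $S_{j-1}f\,\Delta_j g$ lies in an annulus of size $2^j$. Hence only $|j-j'|\le N_0$ contributes to $\Delta_{j'}(T_fg)$. Using $\|S_{j-1}f\|_{L^\infty}\le C\|f\|_{L^\infty}$, which holds because the kernels of $S_{j}$ are uniformly bounded in $L^1$, we get
\begin{equation*}
2^{j'r}\|\Delta_{j'}T_fg\|_{L^2}\le C\|f\|_{L^\infty}\sum_{|j-j'|\le N_0}2^{jr}\|\Delta_j g\|_{L^2}.
\end{equation*}
Taking the $\ell^2$ norm in $j'$ gives $\|T_fg\|_{H^r}\le C\|f\|_{L^\infty}\|g\|_{H^r}$. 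By symmetry, $\|T_gf\|_{H^r}\le C\|g\|_{L^\infty}\|f\|_{H^r}$. This step is valid for every real $r$.

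\textbf{Remainder, the main obstacle.} The blocks $\Delta_j f\,\Delta_k g$ with $|j-k|\le1$ have spectrum in a ball of radius $C2^j$, not an annulus. Therefore $\Delta_{j'}R(f,g)$ collects all $j\ge j'-N_1$, and here we must use $r>0$. We estimate
\begin{equation*}
2^{j'r}\|\Delta_{j'}R\|_{L^2}\le C\|f\|_{L^\infty}\sum_{j\ge j'-N_1}2^{-(j-j')r}\,2^{jr}\|\Delta_j g\|_{L^2},
\end{equation*}
where $\|\Delta_j f\|_{L^\infty}\le C\|f\|_{L^\infty}$ and the sum over $|k-j|\le1$ is absorbed into the constant. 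The right side is a convolution of $(2^{jr}\|\Delta_jg\|_{L^2})_j\in\ell^2$ with the sequence $(2^{-lr}\mathbf 1_{l\ge -N_1})_l$. That sequence lies in $\ell^1$ precisely because $r>0$, so Young's inequality gives $\|R(f,g)\|_{H^r}\le C_r\|f\|_{L^\infty}\|g\|_{H^r}$.

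\textbf{Low frequencies.} The low-frequency part is controlled directly by $\|fg\|_{L^2}\le\|f\|_{L^\infty}\|g\|_{L^2}\le\|f\|_{L^\infty}\|g\|_{H^r}$. Summing the three pieces yields \eqref{KP1}, with a constant $c$ depending only on $r$, which enters through the $\ell^1$ norm of $2^{-lr}$.
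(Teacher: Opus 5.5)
Your proof is correct, and it takes a different route from the paper. The paper gives no proof of this lemma; it only quotes Kato--Ponce \cite{KP1988CPAM}. Their argument is set in the $L^p$-based Bessel potential spaces ($1<p<\infty$), and they obtain the product bound together with their commutator estimate from Coifman--Meyer-type multilinear estimates.

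Your Bony paraproduct argument is more elementary and self-contained for $p=2$. It uses only three ingredients:
\begin{itemize}
\item the frequency-support properties of $S_{j-1}f\,\Delta_j g$ (an annulus) and of $\Delta_j f\,\Delta_k g$ (a ball);
\item the uniform $L^1$ bounds on the Littlewood--Paley kernels;
\item Young's inequality for sequences.
\end{itemize}
It also shows exactly where $r>0$ is needed: only in the remainder $R(f,g)$, through the summability of $(2^{-lr})_{l\ge -N_1}$. The paraproduct bounds hold for every real $r$.

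Your argument carries over essentially verbatim to the Besov spaces $B^r_{p,q}$, $1\le p,q\le\infty$, which is the setting of the paper's well-posedness theorems. What the Kato--Ponce route buys in return is the $L^p$ version for $p\neq 2$. There the Bessel potential space is a Triebel--Lizorkin space rather than a Besov space, so your block-by-block $L^2$ estimates would also need vector-valued maximal inequalities.
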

\begin{lemma}\emph{\cite{KP1988CPAM}}
If $r>0$, 
\begin{equation}\label{KP2}
\|[\Lambda^s,f]g\|_{L^2}\leq c(\|\partial_xf\|_{L^\infty}\|\Lambda^{r-1}g\|_{L^2}
+\|\Lambda^sf\|_{L^2}\|g\|_{L^\infty}),
\end{equation}
where $c$ is a constant depending only on $r$.
\end{lemma}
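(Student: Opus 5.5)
Throughout, $\Lambda=(1-\partial_x^2)^{1/2}$. We read the statement with $s=r$ (the index $r$ in the hypothesis is the same as the exponent $s$ of $\Lambda$), so the claim is
\[
\|[\Lambda^s,f]g\|_{L^2}\le c\bigl(\|f_x\|_{L^\infty}\|\Lambda^{s-1}g\|_{L^2}+\|\Lambda^s f\|_{L^2}\|g\|_{L^\infty}\bigr),\qquad s>0.
\]
The plan is to prove it with Bony's paraproduct decomposition and the inhomogeneous Littlewood--Paley blocks $\Delta_j$, $S_j=\sum_{k<j}\Delta_k$ of \cite{BCD2011}, instead of Kato--Ponce's original pseudodifferential argument.

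\textbf{Step 1: decomposition.} Write $fh=T_fh+T_hf+R(f,h)$, where $T_fh=\sum_j S_{j-1}f\,\Delta_jh$. Using this once with $h=g$ and once with $h=\Lambda^s g$ gives
\[
[\Lambda^s,f]g=[\Lambda^s,T_f]g+\Lambda^s\bigl(T_gf+R(f,g)\bigr)-T_{\Lambda^s g}f-R(f,\Lambda^s g).
\]
We then bound the four groups of terms separately.

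\textbf{Step 2: the terms of the form $\|g\|_{L^\infty}\|\Lambda^s f\|_{L^2}$.} The paraproduct $T_gf$ has spectrum localized in annuli of size about $2^j$, and $\|S_{j-1}g\|_{L^\infty}\le C\|g\|_{L^\infty}$. This gives $\|\Lambda^sT_gf\|_{L^2}\lesssim\|g\|_{L^\infty}\|\Lambda^sf\|_{L^2}$. The remainder $R(f,g)$ is localized only in balls, and here $s>0$ is used. Bound each block by $\|\Lambda^s\Delta_{j'}R\|_{L^2}\lesssim 2^{j's}\sum_{k\ge j'-3}\|\Delta_kf\|_{L^2}\|g\|_{L^\infty}$, and then sum, using the fact that the discrete convolution with $2^{(j'-k)s}\mathbf 1_{k\ge j'}$ is bounded on $\ell^2$ when $s>0$.

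\textbf{Step 3: the terms of the form $\|f_x\|_{L^\infty}\|\Lambda^{s-1}g\|_{L^2}$.} For $k\ge0$ we have Bernstein's bound $\|\Delta_kf\|_{L^\infty}\lesssim2^{-k}\|f_x\|_{L^\infty}$.

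For $T_{\Lambda^s g}f$, put $S_{j-1}\Lambda^sg$ in $L^2$ and $\Delta_jf$ in $L^\infty$. Since $\|S_{j-1}\Lambda^sg\|_{L^2}\lesssim\sum_{k<j}2^k\|\Delta_k\Lambda^{s-1}g\|_{L^2}$, the $j$-th piece is bounded by $\|f_x\|_{L^\infty}\sum_{k<j}2^{k-j}\|\Delta_k\Lambda^{s-1}g\|_{L^2}$. Young's inequality in $\ell^2$ then gives the bound. The remainder $R(f,\Lambda^sg)$ is treated the same way, pairing $2^{-k}$ from $f$ with $2^{k}$ from $\Lambda^{s}=\Lambda\cdot\Lambda^{s-1}$ at equal frequencies.

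For the commutator $[\Lambda^s,T_f]g=\sum_j[\Lambda^s,S_{j-1}f]\Delta_jg$, write $\Lambda^s\tilde\Delta_j$ as convolution with a kernel $h_j(y)=2^{j(s+1)}h(2^jy)$. The first-order Taylor formula then gives
\[
\bigl|[\Lambda^s\tilde\Delta_j,S_{j-1}f]\Delta_jg(x)\bigr|\le\|\nabla S_{j-1}f\|_{L^\infty}\int|y|\,|h_j(y)|\,|\Delta_jg(x-y)|\,dy .
\]
Its $L^2$ norm is $\lesssim\|f_x\|_{L^\infty}2^{j(s-1)}\|\Delta_jg\|_{L^2}$. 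These pieces are almost orthogonal, which yields $\|f_x\|_{L^\infty}\|\Lambda^{s-1}g\|_{L^2}$.

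\textbf{Main obstacle.} The delicate points are the commutator estimate in Step 3 and the low-frequency block $\Delta_{-1}$. There Bernstein's inequality does not turn $\|\Delta_{-1}f\|_{L^\infty}$ into $\|f_x\|_{L^\infty}$. My approach is to notice that every term containing $\Delta_{-1}f$ either appears inside a commutator, so it is still differentiated by the Taylor argument above, or it interacts only with $\Delta_kg$ for $k\le1$. On those frequencies $\Lambda^s$ is a smooth bounded multiplier, so the term can be rewritten through $\partial_x S_0f$ as well. If this fails for some term, I would bound it by $\|\Lambda^sf\|_{L^2}\|g\|_{L^\infty}$ instead, which the right-hand side of the statement permits.
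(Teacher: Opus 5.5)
There is a genuine gap, and it is in the remainder $R(f,\Lambda^sg)$, which cannot be ``treated the same way''. Your decomposition and your bounds for $\Lambda^sT_gf$, $\Lambda^sR(f,g)$, $T_{\Lambda^sg}f$ and $[\Lambda^s,T_f]g$ are sound. (For $\Lambda^s$, unlike $|D|^s$, the rescaled kernels $h_j$ depend on $j$, but they are uniformly Schwartz, so the Taylor argument still works.)

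Each piece $\Delta_kf\,\tilde\Delta_k\Lambda^sg$ is indeed bounded in $L^2$ by $\|f_x\|_{L^\infty}a_k$, with $a_k=\|\tilde\Delta_k\Lambda^{s-1}g\|_{L^2}\in\ell^2$. However, its spectrum lies only in a ball $B(0,C2^k)$, so almost orthogonality is unavailable. Localizing the output gives $\|\Delta_{j'}R(f,\Lambda^sg)\|_{L^2}\lesssim\|f_x\|_{L^\infty}\sum_{k\ge j'-N}a_k$ with no decaying factor. For $a\in\ell^2$ this sum can be infinite (take $a_k=1/k$).

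Your other terms were each rescued by a decay factor that is absent here:
\begin{itemize}
\item In Step 2, $\Lambda^s$ acts on the output and produces $2^{(j'-k)s}$.
\item In $T_{\Lambda^sg}f$, frequency separation produces $2^{k-j}$.
\end{itemize}
Here $\Lambda^s$ acts on $g$ and the frequencies are comparable. The pair $(f,\Lambda^sg)\in\dot W^{1,\infty}\times H^{-1}$ has total regularity exactly $0$, which is the endpoint where the remainder estimate (needing positive total regularity) breaks down. Putting $\Delta_kf$ in $L^2$ and $\tilde\Delta_k\Lambda^sg$ in $L^\infty$ instead lands at the same endpoint, $H^s\times B^{-s}_{\infty,\infty}$.

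This diagonal term is the genuinely hard part of the estimate. It is where Kato and Ponce (whom the paper simply cites, with no proof) use the Coifman--Meyer bilinear multiplier theorem, and your sketch needs an ingredient of that strength. Concretely, for $k\ge0$ write $\Delta_kf\,\tilde\Delta_k\Lambda^sg=P_k(f_x)\,Q_k(\Lambda^{s-1}g)$, where $P_k=2^k\partial_x^{-1}\Delta_k$ and $Q_k=2^{-k}\Lambda\tilde\Delta_k$ are uniformly nice annular multipliers. You must then show $\|\sum_kP_kF\,Q_kG\|_{L^2}\lesssim\|F\|_{L^\infty}\|G\|_{L^2}$.

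Pair with $H\in L^2$. The part against $(1-S_{k-N})H$ only sees $\Delta_{k'}H$ with $|k'-k|$ bounded, and follows from Cauchy--Schwarz in $k$. The part against $S_{k-N}H$ requires the Carleson-measure estimate $\sum_k\int|P_kF|^2|S_{k-N}H|^2\,dx\lesssim\|F\|_{\mathrm{BMO}}^2\|H\|_{L^2}^2$. Bernstein's inequality plus $\ell^2$ bookkeeping cannot supply this.

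A smaller point concerns the low-frequency block. Your claim that the $\Delta_{-1}f$ terms can be rewritten through $\partial_xS_0f$ is false: for $f(x)=\phi(\varepsilon x)$ one has $\|f_x\|_{L^\infty}\to0$ while $\|\Delta_{-1}f\|_{L^\infty}$ stays bounded below. Your fallback does handle these terms, though. For $k\le0$,
$\|\Delta_{-1}f\,\Delta_k\Lambda^sg\|_{L^2}\le\|\Delta_{-1}f\|_{L^2}\|\Delta_k\Lambda^sg\|_{L^\infty}\lesssim\|\Lambda^sf\|_{L^2}\|g\|_{L^\infty}$.
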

Now, we present the precise blow-up scenario for the Cauchy problem \eqref{Cauchyp}.
\begin{theorem}\label{bls1}
Let $u_0(x)\in H^s(\mathbb{R})$, $s>\frac{3}{2}$ and let $T$ be the existence time of the solution $u(x,t)$ to  the Cauchy problem \eqref{Cauchyp} with the initial data $u_0(x)$. Suppose that there exist $M > 0$ such that
\begin{eqnarray}\label{blsc}
\|u(\cdot,t)\|_{L^\infty}+\|u_x(\cdot,t)\|_{L^\infty}\leq M, \quad t\in [0,T),
\end{eqnarray}
then the $H^s$-norm of $u(x,t)$ does not blow up on $[0,T)$.
\end{theorem}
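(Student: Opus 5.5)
We will argue by a standard $H^s$ energy estimate combined with Gronwall's inequality, following \cite{chen2010blowup}. Write $\Lambda=(1-\partial_x^2)^{1/2}$, so that $G\ast f=\Lambda^{-2}f$ and $\partial_x G\ast f=\partial_x\Lambda^{-2}f$. We work with the nonlocal form \eqref{Cauchyp}, assuming first that $u$ is smooth (say $u_0\in H^{s+1}$). The general case then follows by a density argument together with the continuous dependence in Theorem \ref{LSS} with $p=r=2$.

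Apply $\Lambda^s$ to \eqref{Cauchyp}, multiply by $\Lambda^s u$ and integrate over $\mathbb{R}$. This gives
\begin{equation*}
\frac12\frac{d}{dt}\|u\|_{H^s}^2=-\lambda_1(\Lambda^s(u^2),\Lambda^s u)-\lambda_2(\Lambda^s(uu_x),\Lambda^s u)-(\Lambda^{s-2}(\lambda_3u^2+\lambda_4u_x^2),\Lambda^s u)-(\partial_x\Lambda^{s-2}(\lambda_5u^2+\lambda_6u_x^2),\Lambda^s u).
\end{equation*}
We estimate the four groups of terms separately.

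\textbf{The $u^2$ term.} By the Kato--Ponce product estimate \eqref{KP1}, $\|u^2\|_{H^s}\le c\|u\|_{L^\infty}\|u\|_{H^s}$.

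\textbf{The transport term.} Write $\Lambda^s(uu_x)=[\Lambda^s,u]u_x+u\,\Lambda^s u_x$. Integrating by parts gives $(u\Lambda^s u_x,\Lambda^s u)=-\frac12(u_x\Lambda^s u,\Lambda^s u)$, which is bounded by $\|u_x\|_{L^\infty}\|u\|_{H^s}^2$. The commutator estimate \eqref{KP2}, with $f=u$, $g=u_x$ and $r=s$, gives $\|[\Lambda^s,u]u_x\|_{L^2}\le c(\|u_x\|_{L^\infty}\|u\|_{H^s}+\|u\|_{H^s}\|u_x\|_{L^\infty})$. This step is where the uncontrollable quantity $\|u\|_{H^{s+1}}$ would otherwise appear. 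The commutator structure removes it, so this is the main point to verify carefully. In particular, the lemma should be used in its correct form, with $\Lambda^{s-1}g$ and the same exponent $s$ throughout.

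\textbf{The nonlocal terms.} The operators $\Lambda^{-2}$ and $\partial_x\Lambda^{-2}$ gain two and one derivatives respectively. Hence both terms are bounded by $c\,\|\lambda u^2+\lambda' u_x^2\|_{H^{s-1}}\|u\|_{H^s}$. Since $s-1>\frac12>0$, the estimate \eqref{KP1} applies, and
\begin{equation*}
\|u_x^2\|_{H^{s-1}}\le c\|u_x\|_{L^\infty}\|u_x\|_{H^{s-1}}\le c\|u_x\|_{L^\infty}\|u\|_{H^s}, \qquad \|u^2\|_{H^{s-1}}\le c\|u\|_{L^\infty}\|u\|_{H^s}.
\end{equation*}

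Collecting these bounds yields
\begin{equation*}
\frac{d}{dt}\|u\|_{H^s}^2\le C(|\lambda_1|,\dots,|\lambda_6|,s)\,\big(\|u\|_{L^\infty}+\|u_x\|_{L^\infty}\big)\|u\|_{H^s}^2\le CM\|u\|_{H^s}^2 .
\end{equation*}
By Gronwall's inequality, $\|u(t)\|_{H^s}\le e^{CMt/2}\|u_0\|_{H^s}$ for $t\in[0,T)$. This bound is finite whenever $T<\infty$, so the $H^s$-norm does not blow up on $[0,T)$.

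Finally, the smoothness assumption is removed as follows. The estimate depends only on $\|u_0\|_{H^s}$ and $M$. Mollified data produce solutions that satisfy it uniformly, and these solutions converge to $u$ by Theorem \ref{LSS}. The bound then passes to the limit by lower semicontinuity of the $H^s$-norm.
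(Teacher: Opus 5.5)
Your proposal is correct and follows the paper's proof essentially step for step. You apply $\Lambda^s$ and pair with $\Lambda^s u$, handle the $u^2$ term with the Kato--Ponce product estimate, and handle the transport term with the commutator estimate plus integration by parts. The two nonlocal terms you control through the one- or two-derivative gain combined with the product estimate in $H^{s-1}$, and you close with Gronwall.

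The density/regularization step you add is left implicit in the paper and is fine. One small caveat: via continuous dependence, the approximating solutions only inherit a bound like $M+1$, and only on compact subintervals $[0,T']\subset[0,T)$. That is still all the Gronwall argument needs.
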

\begin{proof}
Let $\Lambda=(1-\partial_x^2)^{\tfrac{1}{2}}$. Applying $\Lambda^s$ on \eqref{Cauchyp}, multiplying by
$\Lambda^s u$ and integrating with respect to $x$ over $\mathbb{R}$, we have
\begin{equation}\label{Hsblow-up}
\begin{aligned}
\frac{1}{2}\frac{\mathrm{d}}{\mathrm{d}t}\|u\|_{H^s}^2
&=-\lambda_1\int_{\mathbb{R}}\Lambda^s( u^2)\Lambda^su \,\mathrm{d}x
-\lambda_2\int_{\mathbb{R}}\Lambda^s( u u_x)\Lambda^su \,\mathrm{d}x \\
&\quad-\int_{\mathbb{R}}\Lambda^s( G\ast(\lambda_3 u^2 + \lambda_4 u_x^2))\Lambda^su \,\mathrm{d}x
-\lambda_1\int_{\mathbb{R}}\Lambda^s( \partial_x \left(  G \ast (\lambda_5 u^2 + \lambda_6 u_x^2) \right))\Lambda^su \,\mathrm{d}x.
\end{aligned}
\end{equation}
Then, using the H\"{o}lder inequality, the Young inequality, the Kato-Ponce inequalities \eqref{KP1} and \eqref{KP2}, we estimate the four terms on the right hand side of \eqref{Hsblow-up} as follows:
\begin{equation}\label{Term1}
\begin{aligned}
&\quad\left|\lambda_1\int_{\mathbb{R}}\Lambda^s( u^2)\Lambda^su \,\mathrm{d}x\right|\leq c\|\Lambda^s(u^2)\|_{L^2}\|\Lambda^su\|_{L^2}\\
&\leq c\|u\|_{L^\infty}\|u\|_{H^s}^2,
\end{aligned}
\end{equation}
\begin{equation}\label{Term2}
\begin{aligned}
&\quad\left|\lambda_2\int_{\mathbb{R}}\Lambda^s( uu_x)\Lambda^su \,\mathrm{d}x\right|\\
&\leq\left|\lambda_2\int_{\mathbb{R}}\left(\Lambda^s( uu_x)- u \Lambda^s u_x\right)\Lambda^su \,\mathrm{d}x\right|+\left|\lambda_2\int_{\mathbb{R}} u \Lambda^s u_x\Lambda^su \,\mathrm{d}x\right|\\
&\leq c\|[\Lambda^s,u]u_x\|_{L^2} \|\Lambda^su\|_{L^2}+\left|\lambda_2\int_{\mathbb{R}} u_x (\Lambda^su)^2 \,\mathrm{d}x\right|\\
&\leq c\|\partial_x u\|_{L^\infty}\|u\|_{H^s}^2,
\end{aligned}
\end{equation}
\begin{equation}\label{Term3}
\begin{aligned}
&\quad\left|\int_{\mathbb{R}}\Lambda^s( G\ast(\lambda_3 u^2 + \lambda_4 u_x^2))\Lambda^su \,\mathrm{d}x\right|\\
&=\left|\int_{\mathbb{R}}\Lambda^{s-1}( \lambda_3 u^2 + \lambda_4 u_x^2)\Lambda^{s-1}u \,\mathrm{d}x\right|\\
&\leq c(\|u\|_{L^\infty}\|u\|_{H^{s-1}}+\|u_x\|_{L^\infty}\|u_x\|_{H^{s-1}})\|u\|_{H^{s-1}}\\
&\leq c(\|u\|_{L^\infty}+\|u_x\|_{L^\infty})\|u\|_{H^{s}}^2,
\end{aligned}
\end{equation}
and
\begin{equation}\label{Term4}
\begin{aligned}
&\quad\left|\int_{\mathbb{R}}\Lambda^s( \partial_x(G\ast(\lambda_5 u^2 + \lambda_6 u_x^2)))\Lambda^su \,\mathrm{d}x\right|\\
&=\left|\int_{\mathbb{R}}\Lambda^{s-1}( \lambda_5 u^2 + \lambda_6 u_x^2)\Lambda^{s}u \,\mathrm{d}x\right|\\
&\leq c(\|u\|_{L^\infty}\|u\|_{H^{s-1}}+\|u_x\|_{L^\infty}\|u_x\|_{H^{s-1}})\|u\|_{H^{s}}\\
&\leq c(\|u\|_{L^\infty}+\|u_x\|_{L^\infty})\|u\|_{H^{s}}^2.
\end{aligned}
\end{equation}
Combining \eqref{Term1}-\eqref{Term4} in \eqref{Hsblow-up}, we know that
\begin{equation*}
\frac{\mathrm{d}}{\mathrm{d}t}\|u\|_{H^s}^2\leq c(\|u\|_{L^\infty}+\|u_x\|_{L^\infty})\|u\|_{H^{s}}^2.
\end{equation*}
By Gronwall's inequality and the condition \eqref{blsc}, we get
\begin{equation*}
\|u\|_{H^s}^2\leq \mathrm{e}^{cMt}\|u_0\|_{H^s}^2, \quad t\in [0,T).
\end{equation*}
This completes the proof of Theorem \ref{bls1}.
\end{proof}

We give a precise blow-up scenario for a special case.
\begin{theorem}\label{bls2}
If the coefficients in \eqref{Cauchyp} satisfy the following relations:
\begin{equation}\label{constraintsforbu}
\lambda_3 = \lambda_1,\quad \lambda_4 = 2\lambda_1,\quad \lambda_5 = \frac32\lambda_2 - \lambda_6,
\end{equation}
let $u_0(x)\in H^s(\mathbb{R})$, $s>\tfrac{3}{2}$ and $T$ be the maximal existence time of the solution $u(x,t)$ to \eqref{Cauchyp} with the initial data
$u_0(x)$. Then the corresponding solution blows up in finite time if and only if
\begin{eqnarray*}
\liminf_{t\rightarrow T^-}\inf_{x\in\mathbb{R}}\left[2\lambda_1u+\left(\frac{5}{2}\lambda_2-2\lambda_6\right)u_x\right]=-\infty.
\end{eqnarray*}
\end{theorem}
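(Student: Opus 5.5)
The plan is to exploit the fact that, under the constraints \eqref{constraintsforbu}, the local form \eqref{em} collapses to a pure transport-type equation for the momentum $m=u-u_{xx}$. Indeed, with $\lambda_3=\lambda_1$, $\lambda_4=2\lambda_1$ and $2\lambda_5+2\lambda_6-3\lambda_2=0$, the last three terms of \eqref{em} vanish, and we are left with
\begin{equation*}
m_t+\lambda_2 u m_x+\big[(3\lambda_2-2\lambda_6)u_x+2\lambda_1u\big]m=0 .
\end{equation*}

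\textbf{Energy identity.} First take $s\ge 2$, so that $m\in L^2$. Multiply the equation by $m$, integrate over $\mathbb{R}$, and integrate by parts in the transport term, using $\int u\,m m_x\,dx=-\tfrac12\int u_x m^2\,dx$. This gives
\begin{equation*}
\frac{d}{dt}\int_{\mathbb{R}} m^2\,dx=-2\int_{\mathbb{R}}\Big[2\lambda_1u+\Big(\tfrac52\lambda_2-2\lambda_6\Big)u_x\Big]m^2\,dx .
\end{equation*}
The bracket is exactly the quantity appearing in Theorem \ref{bls2}.

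\textbf{If the liminf is finite, there is no blow-up.} Suppose the liminf is not $-\infty$. Then, by the continuity of $t\mapsto u(\cdot,t)$ in $H^s\subset W^{1,\infty}$ on compact subintervals of $[0,T)$, there is a constant $K$ with $2\lambda_1u+(\tfrac52\lambda_2-2\lambda_6)u_x\ge -K$ on $\mathbb{R}\times[0,T)$. Gronwall applied to the identity above gives $\|m(t)\|_{L^2}^2\le e^{2Kt}\|m_0\|_{L^2}^2$. Since $\|u\|_{H^2}\simeq\|m\|_{L^2}$, this yields a uniform $H^2$ bound on $[0,T)$ if $T<\infty$. Sobolev embedding then bounds $\|u\|_{L^\infty}+\|u_x\|_{L^\infty}$, and Theorem \ref{bls1} shows that the $H^s$ norm stays bounded. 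This contradicts maximality, so $T=\infty$ (no finite-time blow-up).

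\textbf{If the liminf is $-\infty$, there is blow-up.} This direction is the contrapositive of an easy observation. If the solution does not blow up at a finite $T$, then $\sup_{[0,T)}\|u\|_{H^s}<\infty$, and $H^s\hookrightarrow W^{1,\infty}$ for $s>3/2$ bounds $u$ and $u_x$ uniformly. Hence the bracketed quantity is bounded and the liminf cannot be $-\infty$.

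\textbf{Main obstacle: the range $3/2<s<2$.} Here $m\notin L^2$ in general, so the energy identity cannot be applied directly. I would use a regularization argument in the spirit of Constantin--Escher. Approximate $u_0$ in $H^s$ by $u_0^n\in H^2$ (or $H^\infty$). By the continuous dependence in Theorem \ref{LSS} (with $p=r=2$), the solutions $u^n$ exist on a common interval and converge in $C([0,T'];H^s)$. Applying the $H^2$ argument to $u^n$, with a lower bound on the bracket that is uniform in $n$ (inherited via $W^{1,\infty}$ convergence), gives uniform $W^{1,\infty}$ bounds. These then pass to $u$, and Theorem \ref{bls1} closes the argument. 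An alternative route is to show that the maximal existence time is independent of $s>3/2$, by propagating $H^2$ regularity through Theorem \ref{bls1}. Then the $s\ge2$ case suffices, after noting that the lower bound on the bracket can be carried over from the approximating solutions.
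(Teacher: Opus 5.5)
Your argument for $s\ge 2$ and for the converse direction is the paper's proof. Under \eqref{constraintsforbu} the $m$-equation becomes a transport equation, the $L^2$ identity for $m$ produces exactly the bracket, Gronwall gives an $H^2$ bound, and the converse is Sobolev embedding. Invoking Theorem \ref{bls1} to pass from the $H^2$ bound to $H^s$ for $s>2$ is a correct small addition, since the paper stops at $H^2$.

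The step that fails is your regularization for $3/2<s<2$. The only estimate the bracket bound gives you is $\|u^n(t)\|_{H^2}=\|m^n(t)\|_{L^2}\le e^{Kt}\|u^n_0\|_{H^2}$. When $u_0\notin H^2$ one has $\|u^n_0\|_{H^2}\to\infty$, so no uniform $W^{1,\infty}$ bound on the approximants follows. Moreover, the lower bound on the bracket for $u^n$ is only inherited on $[0,T']$ with $T'<T$ and $n\ge N(T')$. There $u$ is already under control, so nothing is gained at $t=T$.

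A density argument can only lower the regularity down to the level of the quantity actually being estimated, not below it. It does justify passing from $H^3$ to $H^2$ data, since then $\|u^n_0\|_{H^2}\to\|u_0\|_{H^2}$. Your alternative route fails for the same reason. Independence of the lifespan from $s$ only compares the $H^{s'}$- and $H^{s}$-lifespans of a datum that already lies in $H^{s'}$. An $H^s$ datum with $s<2$ has no $H^2$ solution to compare with, and carrying the bracket bound over from approximants again works only on $[0,T']$.

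What is missing is an estimate that bounds $\|u\|_{L^\infty}+\|u_x\|_{L^\infty}$ directly in terms of the bracket bound and low norms of $u_0$. In the CH/DP literature this comes from a Riccati argument along characteristics, closed by an a priori $L^\infty$ bound on $u$ from conservation laws. Under \eqref{constraintsforbu} no such bound is available in general. For instance, the $H^1$ conservation of Lemma \ref{CLH1} needs $\lambda_1+\lambda_3=0$, which together with $\lambda_3=\lambda_1$ forces $\lambda_1=0$.

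The paper does not close this range either: it proves only the $H^2$ case and defers the rest to the CH/DP literature. As it stands, your proposal, like the paper, establishes the theorem only for $s\ge 2$.
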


\begin{proof}
We only give the proof for the space $H^2$. For general $ H^s(\mathbb{R})$, $s>\tfrac{3}{2}$, the reader can refer to the literature on CH \cite{constantin1998well} or DP \cite{liu2006global}. Under the relations \eqref{constraintsforbu}, we rewrite equation \eqref{em} as
\begin{equation}\label{em1}
m_t+(3\lambda_2-2\lambda_6)u_xm+\lambda_2um_x+2\lambda_1um=0.
\end{equation}
On the one hand, suppose for any $t\in (0, T]$,  $\inf\limits_{x\in\mathbb{R}}\left[\left(\frac{5}{2}\lambda_2-2\lambda_6\right)u_x+2\lambda_1u\right]\geq -M_0$ with $M_0>0$.
Multiplying \eqref{em1} by $m$, after integrating by parts, we have
\begin{equation*}
\begin{aligned}
\frac{1}{2}\frac{\mathrm{d}}{\mathrm{d}t}\|m\|_{L^2}^2&=-\int_{\mathbb{R}}(3\lambda_2-2\lambda_6)u_xm^2+\lambda_2umm_x+2\lambda_1um^2\,\mathrm{d}x\\
&=-\int_{\mathbb{R}}\left[\left(\frac{5}{2}\lambda_2-2\lambda_6\right)u_x+2\lambda_1u\right]m^2\,\mathrm{d}x.
\end{aligned}
\end{equation*}
By using Gronwall's inequality, we have
\begin{eqnarray*}
\|m(t)\|_{L^2}^2\leq\|m_0\|_{L^2}^2e^{2M_0T},
\end{eqnarray*}
then $\|u(t)\|_{H^2}^2\leq\|m(t)\|_{L^2}^2$ is bounded. This contradicts the fact that $T$ is the maximal time of existence.

On the other hand, the solution does not blow up in $H^s, s>\tfrac{3}{2}$, that is $\|u\|_{H^s}$ is bounded, by Morrey's inequality, we have
\begin{eqnarray*}
\left\|\left(\frac{5}{2}\lambda_2-2\lambda_6\right)u_x+2\lambda_1u\right\|_{L^\infty}\leq C\|u\|_{H^s}<+\infty.
\end{eqnarray*}
This completes the proof of Theorem \ref{bls2}.
\end{proof}
\subsection{Global existence}
In this subsection, we establish sufficient conditions on the initial data to ensure the global existence of the solution.
\begin{lemma}\label{CLH1}
If the coefficients in \eqref{Cauchyp} satisfy the following relations:
\begin{equation*}\label{constraintsforH1}
\lambda_1+\lambda_3=0,\quad 2\lambda_1+\lambda_4 =0,\quad \lambda_2 = 2\lambda_6,
\end{equation*}
then, equation \eqref{Cauchyp} has the conservation law
\begin{equation*}
\left\| u\right\| _{H^1}=\left\| u_{0}\right\|_{H^1} .
\end{equation*}
\end{lemma}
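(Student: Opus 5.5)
The plan is to work with the equivalent local form \eqref{em} and compute the time derivative of the $H^1$ energy
\[
E(t)=\frac12\int_{\mathbb{R}}\left(u^2+u_x^2\right)\mathrm{d}x=\frac12\int_{\mathbb{R}} u\,m\,\mathrm{d}x,\qquad m=u-u_{xx}.
\]
Since $(1-\partial_x^2)$ is self-adjoint, we have $\frac{\mathrm{d}}{\mathrm{d}t}E=\int_{\mathbb{R}} u\,m_t\,\mathrm{d}x$. I will first carry this out for smooth, decaying solutions, e.g.\ $u_0\in H^s$ with $s\ge 3$, so that every integration by parts is justified. The general case then follows from the well-posedness in Theorem \ref{LSS} (with $p=r=2$) and continuous dependence on the initial data.

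\textbf{Step 1: reduce the equation.} Substituting the constraints $\lambda_3=-\lambda_1$, $\lambda_4=-2\lambda_1$ and $\lambda_2=2\lambda_6$ into \eqref{em}, the equation becomes
\[
m_t+2\lambda_2u_xm+\lambda_2um_x+2\lambda_1um-2\lambda_1u^2+(2\lambda_5-2\lambda_2)uu_x-4\lambda_1u_x^2=0 .
\]
Here I used $3\lambda_2-2\lambda_6=2\lambda_2$ and $2\lambda_6+2\lambda_5-3\lambda_2=2\lambda_5-2\lambda_2$.

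\textbf{Step 2: multiply by $u$, integrate, and group the terms.} I will treat three groups separately.
\begin{itemize}
\item The transport group gives
\[
\lambda_2\int_{\mathbb{R}}\left(2uu_xm+u^2m_x\right)\mathrm{d}x=\lambda_2\int_{\mathbb{R}}(u^2m)_x\,\mathrm{d}x=0 .
\]
\item The term $(2\lambda_5-2\lambda_2)\int_{\mathbb{R}} u^2u_x\,\mathrm{d}x$ is an exact derivative, so it vanishes.
\item For the $\lambda_1$ group, expand $m=u-u_{xx}$:
\[
2\lambda_1\int_{\mathbb{R}} u^2(u-u_{xx})\,\mathrm{d}x-2\lambda_1\int_{\mathbb{R}} u^3\,\mathrm{d}x-4\lambda_1\int_{\mathbb{R}} uu_x^2\,\mathrm{d}x .
\]
The cubic terms cancel. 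Integrating by parts gives $-2\lambda_1\int_{\mathbb{R}} u^2u_{xx}\,\mathrm{d}x=4\lambda_1\int_{\mathbb{R}} uu_x^2\,\mathrm{d}x$, which cancels the last term exactly.
\end{itemize}
Hence $\frac{\mathrm{d}}{\mathrm{d}t}E=0$, and therefore $\|u(t)\|_{H^1}=\|u_0\|_{H^1}$.

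\textbf{Step 3: remove the smoothness assumption.} This is the only delicate point; the algebra above is routine. I will approximate $u_0\in H^s$, $s>\frac32$, by smooth data $u_0^n\to u_0$ in $H^s$. Theorem \ref{LSS} provides a common existence time and convergence $u^n\to u$ in $C([0,T];H^s)$, hence in $C([0,T];H^1)$. Passing to the limit in $\|u^n(t)\|_{H^1}=\|u^n_0\|_{H^1}$ gives the conservation law on the whole existence interval.

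An alternative is to work directly in the nonlocal form \eqref{Cauchyp}: multiply by $u$ and by $-u_{xx}$ and use the identities $\int_{\mathbb{R}} u\,G\ast f\,\mathrm{d}x-\int_{\mathbb{R}} u_{xx}\,G\ast f\,\mathrm{d}x=\int_{\mathbb{R}} uf\,\mathrm{d}x$. This leads to the same cancellations and needs only $H^s$, $s>\frac32$, regularity via a standard mollification argument.
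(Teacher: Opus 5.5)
Your proposal is correct and is essentially the paper's argument in dual form. The paper multiplies the nonlocal equation \eqref{Cauchyp} by $m$ and uses $G\ast m=u$ to obtain $\frac12\frac{\mathrm{d}}{\mathrm{d}t}\|u\|_{H^1}^2=-\int_{\mathbb{R}}\big[(\lambda_1+\lambda_3)u^3+(2\lambda_1+\lambda_4)uu_x^2+\frac{\lambda_2-\lambda_5}{3}(u^3)_x+\big(\frac{\lambda_2}{2}-\lambda_6\big)u_x^3\big]\,\mathrm{d}x$, whereas you multiply the local form \eqref{em} by $u$; this is the same computation, since $\int_{\mathbb{R}}u_t m\,\mathrm{d}x=\int_{\mathbb{R}}u\,m_t\,\mathrm{d}x$, and your closing alternative is exactly the paper's route. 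Your Step 3 density argument supplies a justification the paper omits, though it tacitly uses that $H^3$ regularity persists on the $H^s$ lifespan, which follows for instance from Theorem \ref{bls1}.
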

\begin{proof}
Multiplying \eqref{Cauchyp} by $m$ and integrating with respect to $x$ over $\mathbb{R}$, we have
\begin{equation*}
    \begin{aligned}
 \frac{1}{2}\frac{d}{dt}\|u\|_{H^1} &=-\int_{\mathbb{R}}\lambda_1 u^2m + \lambda_2 u u_xm + G\ast(\lambda_3 u^2 + \lambda_4 u_x^2)m + \partial_x \left(  G \ast (\lambda_5 u^2 + \lambda_6 u_x^2)\right)m dx\\
 &=-\int_{\mathbb{R}}\lambda_1 u^2m + \lambda_2 u u_xm + (\lambda_3 u^2 + \lambda_4 u_x^2)u + \partial_x  (\lambda_5 u^2 + \lambda_6 u_x^2)u dx\\
 &=-\int_{\mathbb{R}}(\lambda_1+\lambda_3) u^3 + (2\lambda_1+\lambda_4) u u_x^2
 +\frac{\lambda_2-\lambda_5}{3}\partial_x(u^3) + \left(\frac{\lambda_2}{2}-\lambda_6\right)u_x^3dx\\
 &=0.
 \end{aligned}
\end{equation*}
This completes the proof of Lemma \ref{CLH1}.
\end{proof}
Before going to our main result, for $\lambda_2\neq0$, we introduce the characteristics $q(x,t)$ associated to the equation \eqref{Cauchyp} as
\begin{equation}\label{characteristics}
\left\{\begin{array}{l}
q_{t}(x,t)=\lambda_2u(q(x,t),t),\quad x \in {\mathbb{R}}, \quad t \in[0, T), \\
q(x,0)=x.
\end{array}\right.
\end{equation}
where $T$ is the life-span of the solution.
Taking the derivative \eqref{characteristics} with respect to $x$, we obtain
$$\frac{dq_t}{dx}=q_{tx}=\lambda_2u_x(q,t)q_x, \qquad t\in(0,T).$$
Therefore,
\begin{eqnarray*}\label{qx}
\left\{
  \begin{array}{ll}
   q_x=\exp\left(\int^t_0\lambda_2u_x(q,s)ds\right), \qquad 0<t<T, x\in \mathbb{R},\\
   q_x(x,0)=1, \qquad x\in \mathbb{R},
  \end{array}
\right.
\end{eqnarray*}
which is always positive before the blow-up time. Therefore, the function $q(x,t)$ is an increasing diffeomorphism of the line before it blows up.
\begin{theorem}\label{GE}
If the coefficients in \eqref{Cauchyp} satisfy the following relations:
\begin{equation}\label{constraintsforGE}
\lambda_1+\lambda_3=0,\quad 2\lambda_1+\lambda_4 =0,\quad \lambda_2 = \lambda_6=0.
\end{equation}
Suppose that $u_0(x)\in H^s(\mathbb{R})$, $s>\frac{3}{2}$, then, the corresponding solution $u(x, t)$ to
the Cauchy problem \eqref{Cauchyp} with $u_0$ as the initial datum exists globally.
\end{theorem}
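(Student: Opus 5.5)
The plan is to combine the $H^1$ conservation law of Lemma \ref{CLH1} with the blow-up criterion of Theorem \ref{bls1}. The relations \eqref{constraintsforGE} imply $\lambda_1+\lambda_3=0$, $2\lambda_1+\lambda_4=0$ and $\lambda_2=0=2\lambda_6$, so Lemma \ref{CLH1} applies and gives $\|u(t)\|_{H^1}=\|u_0\|_{H^1}$ on $[0,T)$. The Sobolev embedding $\|u\|_{L^\infty}\le \tfrac{1}{\sqrt2}\|u\|_{H^1}$ then bounds $\|u\|_{L^\infty}$ uniformly in time. By Theorem \ref{bls1}, it remains only to show that $\|u_x\|_{L^\infty}$ cannot blow up in finite time.

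Under \eqref{constraintsforGE} the equation reads
\begin{equation*}
u_t+\lambda_1u^2-\lambda_1G\ast(u^2+2u_x^2)+\lambda_5\partial_x G\ast(u^2)=0 .
\end{equation*}
Its key feature is that there is no transport term, since $\lambda_2=0$. I will differentiate in $x$ and use $\partial_x^2G\ast f=G\ast f-f$. This gives
\begin{equation*}
u_{xt}=-2\lambda_1 uu_x+\lambda_1\partial_xG\ast(u^2+2u_x^2)-\lambda_5\big(G\ast(u^2)-u^2\big).
\end{equation*}
Every nonlocal term is controlled by the conserved energy: $\|G\ast f\|_{L^\infty},\|\partial_xG\ast f\|_{L^\infty}\le \tfrac12\|f\|_{L^1}$, and $\|u^2+2u_x^2\|_{L^1}\le 2\|u_0\|_{H^1}^2$. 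The local term satisfies $|\lambda_5u^2|\le C\|u_0\|_{H^1}^2$. Writing $K=\|u_0\|_{H^1}$, I therefore obtain, for each fixed $x$,
\begin{equation*}
|\partial_t u_x(x,t)|\le 2|\lambda_1|\tfrac{K}{\sqrt2}\,|u_x(x,t)|+C(\lambda_1,\lambda_5)K^2 .
\end{equation*}
This is a linear differential inequality in $t$ alone. Integrating in time at fixed $x$ and applying Gronwall gives
\begin{equation*}
\|u_x(t)\|_{L^\infty}\le\big(\|u_{0,x}\|_{L^\infty}+CK^2t\big)e^{\sqrt2|\lambda_1|Kt},
\end{equation*}
which is finite for every finite $t$. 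No characteristics \eqref{characteristics} are needed, because they are trivial when $\lambda_2=0$.

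The main technical point is justifying the pointwise time differentiation of $u_x$ when $s$ is only slightly above $3/2$. I would first carry out the argument for $s\ge3$, where $u_x\in C^1([0,T);C_b)$. Alternatively, I would use the integral-in-time form of the differentiated equation, which only requires $u\in C([0,T);H^s)\cap C^1([0,T);H^{s-1})$; here $u_{xt}$ is understood as a bounded continuous function, since the right-hand side is. The general case then follows from the continuous dependence in Theorem \ref{LSS} together with the fact that the maximal time does not depend on $s$. With $\|u\|_{L^\infty}+\|u_x\|_{L^\infty}$ bounded on every finite interval, Theorem \ref{bls1} shows that the $H^s$ norm cannot blow up, so $T=\infty$.
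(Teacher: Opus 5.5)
Your proposal is correct and follows essentially the same route as the paper. You use the $H^1$ conservation from Lemma \ref{CLH1} to bound $\|u\|_{L^\infty}$, treat the $x$-differentiated equation (which has no transport term since $\lambda_2=0$) as a linear ODE in $t$ at each fixed $x$ with the nonlocal terms controlled by the conserved energy, and close with Theorem \ref{bls1}. The paper writes this via an explicit integrating factor rather than Gronwall, which is the same argument; your constants and your justification at low regularity are, if anything, more careful.
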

\begin{proof}
Under the relations \eqref{constraintsforGE}, the $H^1$-norm of solution $u(x,t)$ is conserved. 
By inequality $\|u\|_{L^\infty}\leq \frac{1}{\sqrt{2}}\|u\|_{H^1}$
and blow-up scenario (Theorem \ref{bls1}), we only need to prove that for any time $T>0$, there exists a number $M(T)>0$, such that $\|u_x(\cdot,T)\|_{L^\infty}\leq M(T)$.

Under the relations \eqref{constraintsforGE}, equation \eqref{Cauchyp} reduces to
\begin{equation*}
u_t + \lambda_1 u^2  + G\ast(\lambda_3 u^2 + \lambda_4 u_x^2) + \partial_x \left(  G \ast (\lambda_5 u^2 ) \right) = 0.
\end{equation*}
Differentiating the above equation with respect to $x$, we have
\begin{equation*}
u_{xt} + 2\lambda_1 uu_x  +\partial_xG\ast(\lambda_3 u^2 + \lambda_4 u_x^2) +  G \ast (\lambda_5 u^2 ) -\lambda_5 u^2 = 0.
\end{equation*}
By fundamental ODE methods, for any $T>0$, we have
\begin{equation*}
u_{x}(x,T) =e^{\int_0^T-2\lambda_1 udt}\left(u_{0x}(x)
-\int_0^T\left(\partial_xG\ast(\lambda_3 u^2 + \lambda_4 u_x^2) +  G \ast (\lambda_5 u^2 ) -\lambda_5 u^2 \right)
e^{\int_0^s2\lambda_1 u(x,\tau)d\tau}ds\right).
\end{equation*}
It follows that for $s>\tfrac{3}{2}$
\begin{equation*}
\begin{aligned}
|u_{x}(x,T)| &\leq e^{|\lambda_1| \|u_0\|_{H^1}T}\left(u_{0x}(x)+
\int_{0}^{T}\left(\frac{1}{2}|\lambda_3|+\frac{1}{2}|\lambda_4|+|\lambda_5|\right)
\|u_0\|_{H^1}^2e^{|\lambda_1| \|u_0\|_{H^1}s}ds\right)\\
& \leq e^{|\lambda_1| \|u_0\|_{H^1}T}\left(u_{0x}(x)+
\left(\frac{1}{2}|\lambda_3|+\frac{1}{2}|\lambda_4|+|\lambda_5|\right)
\|u_0\|_{H^1}^2e^{|\lambda_1| \|u_0\|_{H^1}T}T\right).
\end{aligned}
\end{equation*}
This means $\|u_x(\cdot,T)\|_{L^\infty}\leq M(T)$.

This completes the proof of Theorem \ref{GE}.
\end{proof}

\begin{theorem}\label{GE1}
If the coefficients in \eqref{Cauchyp} satisfy the following relations:
\begin{equation}\label{constraintsforGE1}
\lambda_1=\lambda_3,\quad\lambda_4 =2\lambda_1,\quad \lambda_5 = \frac32\lambda_2 - \lambda_6,\quad \lambda_2\neq0.
\end{equation}
Suppose that $u_0(x)\in H^s(\mathbb{R})$, $s\geq 2$, 
\begin{eqnarray}
&m_0(x)\geq(\not\equiv) 0,\quad \lambda_1\geq\left|\frac{1}{2}\left(\frac{5}{2}\lambda_2-2\lambda_6\right)\right|;\label{constraints2forGE1}\\
\mathrm{or}\quad &m_0(x)\leq(\not\equiv) 0,\quad \lambda_1\leq-\left|\frac{1}{2}\left(\frac{5}{2}\lambda_2-2\lambda_6\right)\right|. \label{constraints2forGE2}
\end{eqnarray}
Then, the corresponding solution $u(x, t)$ to
the Cauchy problem \eqref{Cauchyp} with $u_0$ as the initial datum exists globally.
\end{theorem}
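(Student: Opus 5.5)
The plan is to reduce global existence to a one-sided lower bound, via the blow-up scenario of Theorem \ref{bls2}. Under \eqref{constraintsforGE1} the local form \eqref{em} becomes \eqref{em1}, so by Theorem \ref{bls2} it suffices to show that
\[
w:=2\lambda_1u+\left(\tfrac52\lambda_2-2\lambda_6\right)u_x
\]
stays bounded below on every finite interval $[0,T)$. Write $k=\tfrac52\lambda_2-2\lambda_6$.

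The first step is to show that $m$ keeps its sign along the characteristics $q(x,t)$ from \eqref{characteristics}. Using $q_t=\lambda_2u(q,t)$, equation \eqref{em1} gives
\[
\frac{d}{dt}m(q,t)=-\big[(3\lambda_2-2\lambda_6)u_x+2\lambda_1u\big](q,t)\,m(q,t),
\]
so $m(q(x,t),t)=m_0(x)\exp\big(-\int_0^t[(3\lambda_2-2\lambda_6)u_x+2\lambda_1u]\,d\tau\big)$. Since $q(\cdot,t)$ is an increasing diffeomorphism of $\mathbb{R}$, the sign of $m_0$ is preserved for as long as the solution exists. Here we use $s\ge2$, so that $m\in L^2$ and is continuous enough for the characteristic argument; for $s=2$ we would approximate by smooth data.

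The second step handles the case $m_0\ge0$. Since $u=G\ast m$ with $m\ge0$, the standard CH identities
\[
u\pm u_x=\int e^{-|x-y|}\big(\tfrac12\text{ one-sided}\big)m\,dy\ge0
\]
give $u\ge |u_x|$ pointwise, in particular $u\ge0$. Then
\[
w\ge 2\lambda_1u-|k|\,|u_x|\ge(2\lambda_1-|k|)\,u\ge0,
\]
because $\lambda_1\ge|k|/2$. So $w$ is bounded below by $0$, and Theorem \ref{bls2} rules out blow-up.

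The case $m_0\le0$ is symmetric: now $u\le0$ and $|u_x|\le -u$, so $w\ge 2\lambda_1u+|k|u=(2\lambda_1+|k|)u\ge0$, because $\lambda_1\le-|k|/2$ and $u\le0$.

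The main obstacle is the rigorous justification of the sign preservation and of the $u\ge|u_x|$ bounds at the regularity at hand, together with applying Theorem \ref{bls2}, which was proved only in $H^2$. That is why the statement assumes $s\ge2$. Beyond this, we just need to check that the exponent along characteristics is finite before $T$, which holds because $u_x$ is bounded on compact subintervals of the existence interval.
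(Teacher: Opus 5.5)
Your proposal is correct and follows the paper's proof: your exponential formula for $m(q(x,t),t)$ is the paper's invariant $e^{\frac{2\lambda_1}{\lambda_2}q}\,m\,q_x^{\frac{3\lambda_2-2\lambda_6}{\lambda_2}}=e^{\frac{2\lambda_1}{\lambda_2}x}m_0$ rewritten, and you reach the same lower bound $2\lambda_1u+\left(\frac52\lambda_2-2\lambda_6\right)u_x\ge 0$ and feed it into Theorem \ref{bls2}. The only cosmetic difference is that the paper writes this quantity directly as $\left(\lambda_1-\frac{k}{2}\right)(u-u_x)+\left(\lambda_1+\frac{k}{2}\right)(u+u_x)$, with both coefficients of the appropriate sign, whereas you pass through the equivalent bound $|u_x|\le \pm u$.
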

\begin{proof}
Under the relations \eqref{constraintsforGE1}, equation \eqref{em} reduces to 
\begin{equation*}\label{em1}
m_t+(3\lambda_2-2\lambda_6)u_xm+\lambda_2um_x+2\lambda_1um=0.
\end{equation*} 
The operator $(1-\partial_x^2)^{-1} $ can be expressed by its associated Green's function as
$$u(x,t)=(1-\partial_x^2)^{-1} m(x,t)=G*m,\quad G(x)=\frac{1}{2}e^{-|x|}.$$
More precisely,
\begin{eqnarray*}\label{u}
u(x,t)=G\ast m(x,t)=\frac{1}{2}e^{-x}\int_{-\infty}^{x}e^{\xi}m(\xi,t)d\xi+\frac{1}{2}e^{x}\int_x^{\infty}e^{-\xi}m(\xi,t)d\xi.
\end{eqnarray*}
\begin{eqnarray*}\label{ux}
u_x(x,t)=-\frac{1}{2}e^{-x}\int_{-\infty}^{x}e^{\xi}m(\xi,t)d\xi+\frac{1}{2}e^{x}\int_x^{\infty}e^{-\xi}m(\xi,t)d\xi.
\end{eqnarray*}
From the blow-up scenario (Theorem \ref{bls2}), we need to show that $\left(\frac{5}{2}\lambda_2-2\lambda_6\right)u_x+2\lambda_1u$ is bounded from below.
Recall the definition of characteristics \eqref{characteristics}, for $\lambda_2\neq0$, we have 
\begin{equation*}
\begin{aligned}
\quad\frac{d}{dt}\left(e^{\tfrac{2\lambda_1}{\lambda_2}q}mq_x^{\tfrac{3\lambda_2-2\lambda_6}{\lambda_2}}\right)=\left((3\lambda_2-2\lambda_6)u_xm+\lambda_2um_x+2\lambda_1um\right)
e^{\tfrac{2\lambda_1}{\lambda_2}q}q_x^{\tfrac{3\lambda_2-2\lambda_6}{\lambda_2}}
=0.
\end{aligned}
\end{equation*}
Hence, the following identity can be proved:
\begin{equation}\label{IE}
e^{\tfrac{2\lambda_1}{\lambda_2}q}mq_x^{\tfrac{3\lambda_2-2\lambda_6}{\lambda_2}}
=e^{\tfrac{2\lambda_1}{\lambda_2}x}m_0,
\end{equation}
which implies that $m(x, t)$ keeps the sign with respect to the initial datum.
If the initial data condition $m_0(x)\geq(\not\equiv) 0$, by \eqref{IE}, we obtain
\begin{eqnarray}\label{mp1}
m(x,t)\geq(\not\equiv) 0.
\end{eqnarray}
If the initial data condition $m_0(x)\leq(\not\equiv) 0$, by \eqref{IE}, we obtain
\begin{eqnarray}\label{mp2}
m(x,t)\leq(\not\equiv) 0.
\end{eqnarray}
From \eqref{constraints2forGE1}, \eqref{constraints2forGE2}, \eqref{mp1} and \eqref{mp2}, we have
\begin{equation*}
\begin{aligned}
&\quad\left(\frac{5}{2}\lambda_2-2\lambda_6\right)u_x+2\lambda_1u\\
&=\left[\lambda_1-\frac{1}{2}\left(\frac{5}{2}\lambda_2-2\lambda_6\right)\right]
e^{-x}\int_{-\infty}^x e^\xi mdx
+\left[\lambda_1+\frac{1}{2}\left(\frac{5}{2}\lambda_2-2\lambda_6\right)\right]
e^{x}\int_{x}^{\infty}  e^{-\xi} mdx\\
&\geq0.
\end{aligned}
\end{equation*}
The proof of Theorem \ref{GE1} is completed.
\end{proof}

\subsection{Wave breaking phenomenon}
In this subsection, we study the wave breaking phenomenon. Wave breaking phenomenon means that the solution remains bounded but its slope becomes unbounded inﬁnite time. We introduce two Lemmas for ODE theory which will been used later.
\begin{lemma}\emph{\cite{Chen2016JFA}}\label{ybl1}
Assume that a differentiable function $\tilde{y}(t)$ satisfies
\begin{equation}\label{RTE2}
y'\geq Cy^2-K
\end{equation}
with constants $C, K > 0$. If the initial datum $y(0) = y_0 >\sqrt{\frac{K}{C}}$, then the solution $y(t)$ to \eqref{RTE2} goes to $+\infty$ before $t$ tend to 
$\frac{1}{2\sqrt{CK}}\log\left(\frac{y_0+\sqrt{\frac{K}{C}}}{y_0-\sqrt{\frac{K}{C}}}\right)$.
\end{lemma}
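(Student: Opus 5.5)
The plan is a comparison argument with the explicit solution of the Riccati equation $Y'=CY^2-K$. Set $a=\sqrt{K/C}$, so that $y_0>a>0$. The goal is to show that $y$ stays above the comparison solution $Y$ and that $Y$ reaches $+\infty$ at exactly
\[
T^\ast=\frac{1}{2\sqrt{CK}}\log\left(\frac{y_0+a}{y_0-a}\right).
\]

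\textbf{Step 1: $y$ stays above $a$ and increases.} At $t=0$ we have $y'(0)\geq Cy_0^2-K>0$. Suppose $t_1$ were the first time with $y(t_1)=a$. On $[0,t_1)$ we would have $y>a$, so $y'\geq C(y^2-a^2)>0$. Hence $y$ is nondecreasing there, which gives $y(t_1)\geq y_0>a$, a contradiction. Therefore $y(t)>a$ on the whole existence interval, and $y$ is increasing.

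\textbf{Step 2: integrate the differential inequality.} Since $y>a$, we can divide and write
\[
\frac{y'}{y^2-a^2}\geq C .
\]
Using the partial fraction identity
\[
\frac{1}{y^2-a^2}=\frac{1}{2a}\left(\frac{1}{y-a}-\frac{1}{y+a}\right),
\]
we get
\[
\frac{d}{dt}\left[\frac{1}{2a}\log\frac{y-a}{y+a}\right]\geq C .
\]
Integrating from $0$ to $t$ gives
\[
\log\frac{y(t)-a}{y(t)+a}\geq \log\frac{y_0-a}{y_0+a}+2aCt .
\]

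\textbf{Step 3: read off the blow-up time.} The left-hand side is strictly negative for every finite $y>a$, because $\frac{y-a}{y+a}<1$. Hence the inequality can only hold while
\[
2aCt<\log\frac{y_0+a}{y_0-a}.
\]
Since $2aC=2\sqrt{CK}$, this means $t<T^\ast$. So $y$ cannot remain finite up to $T^\ast$.

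Equivalently, rearranging the inequality yields the explicit lower bound
\[
y(t)\geq a\,\frac{1+\theta e^{2\sqrt{CK}t}}{1-\theta e^{2\sqrt{CK}t}},\qquad \theta=\frac{y_0-a}{y_0+a}\in(0,1).
\]
The right-hand side tends to $+\infty$ as $t\to T^\ast$. Therefore $y(t)\to+\infty$ no later than $T^\ast$.

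The only delicate point is the invariance in Step 1, namely that $y$ never drops to $a$. This is what justifies dividing by $y^2-a^2$. The first-crossing-time argument handles it, since at a differentiable crossing point the derivative is forced to be positive.
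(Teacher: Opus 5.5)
Your proof is correct. The paper gives no proof of this lemma (it is quoted from \cite{Chen2016JFA}), and your approach is the standard argument behind it: show that $y$ stays above $a=\sqrt{K/C}$, then separate variables in $y'/(y^2-a^2)\ge C$ and integrate. One small correction to your closing remark: at a first crossing time $t_1$ the inequality only gives $y'(t_1)\ge Ca^2-K=0$, not $y'(t_1)>0$. What actually rules out the crossing is the strict monotonicity on $[0,t_1)$ from your Step 1, which your argument already uses.
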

Let $\tilde{y}=-y$ in Lemma \ref{ybl1}, it is easy to get
\begin{lemma}\label{ybl2}
Assume that a differentiable function $y(t)$ satisfies
\begin{equation}\label{RTE1}
\tilde{y}'\leq-C\tilde{y}^2+K
\end{equation}
with constants $C, K > 0$. If the initial datum $\tilde{y}(0) = \tilde{y}_0 < -\sqrt{\frac{K}{C}}$, then the solution to \eqref{RTE1} goes to $-\infty$ before $t$ tend to 
$\frac{1}{2\sqrt{CK}}\log\left(\frac{\tilde{y}_0-\sqrt{\frac{K}{C}}}{\tilde{y}_0+\sqrt{\frac{K}{C}}}\right)$.
\end{lemma}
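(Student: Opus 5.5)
Lemma \ref{ybl2} is a direct consequence of Lemma \ref{ybl1} applied to $y:=-\tilde{y}$. Suppose $\tilde{y}$ satisfies $\tilde{y}'\leq -C\tilde{y}^2+K$ with $\tilde{y}(0)=\tilde{y}_0<-\sqrt{K/C}$, and set $y(t)=-\tilde{y}(t)$. Then $y$ is differentiable, and since $y^2=\tilde{y}^2$,
\begin{equation*}
y'=-\tilde{y}'\geq C\tilde{y}^2-K=Cy^2-K .
\end{equation*}
So $y$ satisfies \eqref{RTE2} with the same constants $C,K>0$. Moreover $y(0)=-\tilde{y}_0>\sqrt{K/C}$, which is exactly the hypothesis of Lemma \ref{ybl1}.

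By Lemma \ref{ybl1}, $y(t)\to+\infty$ before $t$ reaches
\begin{equation*}
T^\ast=\frac{1}{2\sqrt{CK}}\log\left(\frac{y_0+\sqrt{\frac{K}{C}}}{y_0-\sqrt{\frac{K}{C}}}\right),\qquad y_0=-\tilde{y}_0 .
\end{equation*}
Equivalently, $\tilde{y}(t)=-y(t)\to-\infty$ before the same time.

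It remains to write $T^\ast$ in terms of $\tilde{y}_0$. Substituting $y_0=-\tilde{y}_0$ gives the ratio
\begin{equation*}
\frac{-\tilde{y}_0+\sqrt{K/C}}{-\tilde{y}_0-\sqrt{K/C}}=\frac{\tilde{y}_0-\sqrt{K/C}}{\tilde{y}_0+\sqrt{K/C}},
\end{equation*}
obtained by multiplying numerator and denominator by $-1$. This ratio is positive and greater than $1$, since both $\tilde{y}_0\pm\sqrt{K/C}$ are negative and $|\tilde{y}_0-\sqrt{K/C}|>|\tilde{y}_0+\sqrt{K/C}|$. Hence the logarithm is well defined and positive, and $T^\ast$ is exactly the bound stated in Lemma \ref{ybl2}.

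There is no real obstacle. The only points needing care are that the inequality direction reverses correctly under $y=-\tilde{y}$, that the quadratic term is invariant under the sign change, and the sign bookkeeping inside the logarithm.
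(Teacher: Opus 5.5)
Your proposal is correct and takes the same route as the paper, which obtains Lemma~\ref{ybl2} in one line by substituting $\tilde{y}=-y$ into Lemma~\ref{ybl1}. Your version just spells out the details the paper leaves implicit: the reversal of the inequality, the invariance of the quadratic term, and the sign bookkeeping that makes the logarithm's argument exceed $1$.
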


\begin{theorem}\label{WBP}
If the coefficients in \eqref{Cauchyp} satisfy the following relations:
\begin{equation*}
\lambda_1+\lambda_3=0,\quad 2\lambda_1+\lambda_4 =0,\quad \lambda_2 = 2\lambda_6.
\end{equation*}
Suppose that $u_0(x)\in H^s(\mathbb{R})$, $s>\frac{3}{2}$, if there exists a point $x_0$, such that 
\begin{equation}\label{BLC}
\left\{
  \begin{array}{ll}
 \left(u_{0x} -\frac{\lambda_1}{\lambda_6-\lambda_2}u_0\right)(x_0)
\leq -\sqrt{\frac{K_0}{\lambda_2-\lambda_6}}, 
&\lambda_6<\lambda_2, \\
 \left(u_{0x} -\frac{\lambda_1}{\lambda_6-\lambda_2}u_0\right)(x_0)
\geq \sqrt{\frac{K_0}{\lambda_6-\lambda_2}}, 
&\lambda_6>\lambda_2,
  \end{array}
\right.
\end{equation}
where 
\begin{equation}\label{KCC}
 \begin{aligned}
&K_0:=\left(\frac{|\lambda_5|}{2}+\max\{C_1,C_2\}\right)\|u_0\|_{H^1}^2,\\
&C_1:=\frac{1}{2}\left(|\lambda_3|+|\lambda_5|
+\left|\frac{\lambda_1\lambda_3}{\lambda_6-\lambda_2}\right|
+\left|\frac{\lambda_1\lambda_5}{\lambda_6-\lambda_2}\right|\right),\\
&C_2:=\frac{1}{2}\left(|\lambda_4|+|\lambda_6|
+\left|\frac{\lambda_1\lambda_4}{\lambda_6-\lambda_2}\right|
+\left|\frac{\lambda_1\lambda_6}{\lambda_6-\lambda_2}\right|\right).
\end{aligned}
\end{equation}
Then, the corresponding solution $u(x,t)$ blows up in finite time with an
estimate of the blow-up time $T^*$ as
\begin{equation*}
T^*:=\left\{
\begin{array}{ll}
T_1\leq \frac{1}{2\sqrt{(\lambda_2-\lambda_6)K_0}}
\log\left( \frac{\left(u_{0x} -\frac{\lambda_1}{\lambda_6-\lambda_2}u_0\right)(x_0)
-\sqrt{\frac{K_0}{\lambda_2-\lambda_6}}}
{\left(u_{0x} -\frac{\lambda_1}{\lambda_6-\lambda_2}u_{0}\right)(x_0)
+\sqrt{\frac{K_0}{\lambda_2-\lambda_6}}}\right), & \lambda_6<\lambda_2, \\
T_2\leq \frac{1}{2\sqrt{(\lambda_6-\lambda_2)K_0}}
\log\left( \frac{\left(u_{0x} -\frac{\lambda_1}{\lambda_6-\lambda_2}u_0\right)(x_0)
+\sqrt{\frac{K_0}{\lambda_6-\lambda_2}}}
{\left(u_{0x} -\frac{\lambda_1}{\lambda_6-\lambda_2}u_{0}\right)(x_0)
-\sqrt{\frac{K_0}{\lambda_6-\lambda_2}}}\right), & \lambda_6>\lambda_2.
       \end{array}
     \right.
\end{equation*}
\end{theorem}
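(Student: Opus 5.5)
Under the stated relations, Lemma \ref{CLH1} gives $\|u(t)\|_{H^1}=\|u_0\|_{H^1}$, so $\|u\|_{L^\infty}\le \frac{1}{\sqrt2}\|u_0\|_{H^1}$ for all $t$. By Theorem \ref{bls1}, blow-up is then equivalent to $\|u_x\|_{L^\infty}$ becoming unbounded, i.e.\ wave breaking. The plan is to follow a single characteristic $q(x_0,t)$ from \eqref{characteristics}. Along it we track the combination
\[
y(t)=\Big(u_x-\tfrac{\lambda_1}{\lambda_6-\lambda_2}u\Big)(q(x_0,t),t),
\]
and show that it satisfies a Riccati inequality of the type in Lemma \ref{ybl1} or Lemma \ref{ybl2}.

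\textbf{Step 1 (evolution along the characteristic).} Differentiate \eqref{Cauchyp} in $x$, using $\partial_x^2 G\ast f=G\ast f-f$. This gives
\[
u_{xt}+\lambda_2uu_{xx}+\lambda_2u_x^2+2\lambda_1uu_x+\partial_xG\ast(\lambda_3u^2+\lambda_4u_x^2)+G\ast(\lambda_5u^2+\lambda_6u_x^2)-\lambda_5u^2-\lambda_6u_x^2=0 .
\]
Hence, with $\frac{d}{dt}=\partial_t+\lambda_2u\partial_x$,
\[
\frac{d}{dt}u_x=(\lambda_6-\lambda_2)u_x^2-2\lambda_1uu_x+\lambda_5u^2-\partial_xG\ast(\cdots)-G\ast(\cdots),
\]
and
\[
\frac{d}{dt}u=-\lambda_1u^2-G\ast(\lambda_3u^2+\lambda_4u_x^2)-\partial_xG\ast(\lambda_5u^2+\lambda_6u_x^2).
\]
The coefficient $\frac{\lambda_1}{\lambda_6-\lambda_2}$ is chosen to complete the square. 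Writing $a=\lambda_6-\lambda_2$, we have
\[
a u_x^2-2\lambda_1uu_x=a\big(u_x-\tfrac{\lambda_1}{a}u\big)^2-\tfrac{\lambda_1^2}{a}u^2 ,
\]
and the $\frac{\lambda_1}{a}\lambda_1u^2$ coming from $-\frac{\lambda_1}{a}\frac{d}{dt}u$ cancels the residual $-\frac{\lambda_1^2}{a}u^2$ exactly. This leaves
\[
y'=a\,y^2+\lambda_5u^2+R,
\]
where $R$ collects the convolution terms $G\ast$, $\partial_xG\ast$ applied to $u^2,u_x^2$, with the extra factors $\frac{\lambda_1}{a}$ coming from the $\frac{d}{dt}u$ part.

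\textbf{Step 2 (bounding the nonlocal terms).} Use $|G|,|\partial_xG|\le \frac12 e^{-|x|}$ and $\int G\ast(u^2+u_x^2)$-type bounds: $\|G\ast f\|_{L^\infty},\|\partial_xG\ast f\|_{L^\infty}\le\frac12\|f\|_{L^1}$ for $f\ge0$. For $f=u^2$ and $f=u_x^2$ each is at most $\frac12\|u_0\|_{H^1}^2$ by conservation. Grouping the $u^2$-coefficients gives $C_1$ and the $u_x^2$-coefficients gives $C_2$. Since $\|u^2\|_{L^1}+\|u_x^2\|_{L^1}=\|u\|_{H^1}^2$, the sum is bounded by $\max\{C_1,C_2\}\|u_0\|_{H^1}^2$. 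The local term satisfies $|\lambda_5u^2|\le\frac{|\lambda_5|}{2}\|u_0\|_{H^1}^2$. Altogether $|\lambda_5u^2+R|\le K_0$, so
\[
y'\le -(\lambda_2-\lambda_6)y^2+K_0 \quad\text{if }\lambda_6<\lambda_2, \qquad y'\ge(\lambda_6-\lambda_2)y^2-K_0 \quad\text{if }\lambda_6>\lambda_2 .
\]

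\textbf{Step 3 (conclusion).} In the case $\lambda_6<\lambda_2$, apply Lemma \ref{ybl2} with $C=\lambda_2-\lambda_6$ and $K=K_0$. In the case $\lambda_6>\lambda_2$, apply Lemma \ref{ybl1} with $C=\lambda_6-\lambda_2$. The initial condition \eqref{BLC} is exactly the threshold hypothesis (the boundary case of equality is handled by strictness or a small perturbation). The lemma then gives $y\to\mp\infty$ before the stated $T_1$ or $T_2$. Since $u$ stays bounded, $u_x(q(x_0,t),t)$ is unbounded, so $T^*$ is bounded as claimed. The argument is carried out for $s\ge 3$ to justify the differentiations, and density plus continuous dependence (Theorem \ref{LSS}) extends it to $s>\frac32$.

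The main obstacle is Step 1–2 bookkeeping: verifying the exact square completion (the cancellation of the $\lambda_1^2u^2/a$ term) and matching the grouped constants to $C_1,C_2$ as defined in \eqref{KCC}.
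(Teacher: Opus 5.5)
Your proposal is correct and follows the paper's own proof essentially step for step: differentiate \eqref{Cauchyp} in $x$, follow $y=\bigl(u_x-\tfrac{\lambda_1}{\lambda_6-\lambda_2}u\bigr)(q(x_0,t),t)$, complete the square so that the $\lambda_1^2u^2/(\lambda_6-\lambda_2)$ terms cancel, bound $\lambda_5u^2$ and the convolution terms by $K_0$ using the $H^1$ conservation of Lemma~\ref{CLH1}, and close with the Riccati Lemmas~\ref{ybl1}--\ref{ybl2}. The only soft spot is your parenthetical on the equality case of \eqref{BLC}: the Riccati lemmas need strict inequality, and perturbing the data does not help because lifespans are only lower semicontinuous. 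Moreover, for $u_0\equiv0$ the hypothesis holds with $K_0=0$ while $u\equiv0$ is global, so the statement (whose proof in the paper also ignores this case) should be read with strict inequality.
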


\begin{remark}
In this Theorem, we needs a condition $\lambda_{2}\neq\lambda_6$. Actually, when $\lambda_2=\lambda_6$, equation \eqref{em} reduces to a second-order partial differential equation. It may exists locally or globally in a more low regularity Sobolev space. Such as Theorem \ref{GE}, we have established sufficient conditions for the solution to exist globally.
\end{remark}

\begin{proof}
Finding the partial derivative of the equation \eqref{Cauchyp} with respect to $x$ yields the following the equation
\begin{equation*}\label{u_x}
u_{xt} + 2\lambda_1 uu_x + \lambda_2 u u_{xx} +(\lambda_2-\lambda_6) u_x^2 +\partial_xG\ast(\lambda_3 u^2 + \lambda_4 u_x^2) +  G \ast (\lambda_5 u^2 + \lambda_6 u_x^2) -\lambda_5 u^2 = 0.
\end{equation*}
It follows that 
\begin{equation*}\label{u_x1}
 \begin{aligned}
\quad u_{xt} + \lambda_2 u u_{xx}
&= (\lambda_6-\lambda_2) \left(u_x -\frac{\lambda_1}{\lambda_6-\lambda_2}u\right)^2 
+\left(\lambda_5-\frac{\lambda_1^2}{\lambda_6-\lambda_2}\right) u^2\\
&\quad-\partial_xG\ast(\lambda_3 u^2 + \lambda_4 u_x^2) -   G \ast (\lambda_5 u^2 + \lambda_6 u_x^2).
 \end{aligned}
\end{equation*}
Then, we give the equation for $u_x -\frac{\lambda_1}{\lambda_6-\lambda_2}u$.
\begin{equation*}\label{u_x-u}
\begin{aligned}
&\quad\left(u_x -\frac{\lambda_1}{\lambda_6-\lambda_2}u\right)_t 
+ \lambda_2 u \left(u_x -\frac{\lambda_1}{\lambda_6-\lambda_2}u\right)_{x}\\
&= (\lambda_6-\lambda_2) \left(u_x -\frac{\lambda_1}{\lambda_6-\lambda_2}u\right)^2 
-\partial_xG\ast(\lambda_3 u^2 + \lambda_4 u_x^2) -   G \ast (\lambda_5 u^2 + \lambda_6 u_x^2)
+\lambda_5 u^2 \\
&\quad+\frac{\lambda_1}{\lambda_6-\lambda_2}G\ast(\lambda_3 u^2 +\lambda_4 u_x^2) + \frac{\lambda_1}{\lambda_6-\lambda_2}\partial_x \left(  G \ast (\lambda_5 u^2 + \lambda_6 u_x^2) \right) .
\end{aligned}
\end{equation*}
At the point $(q(x_0,t),t)$, we have 
\begin{equation*}\label{u_x-u}
\begin{aligned}
&\quad\left(u_x -\frac{\lambda_1}{\lambda_6-\lambda_2}u\right)_t(q(x_0,t),t) \\
&= (\lambda_6-\lambda_2) \left(u_x -\frac{\lambda_1}{\lambda_6-\lambda_2}u\right)^2 (q(x_0,t),t)
-\partial_xG\ast(\lambda_3 u^2 + \lambda_4 u_x^2) -   G \ast (\lambda_5 u^2 + \lambda_6 u_x^2)\\
&\quad+\lambda_5 u^2(q(x_0,t),t)+\frac{\lambda_1}{\lambda_6-\lambda_2}G\ast(\lambda_3 u^2 + \lambda_4 u_x^2) + \frac{\lambda_1}{\lambda_6-\lambda_2}\partial_x \left(  G \ast (\lambda_5 u^2 + \lambda_6 u_x^2) \right)\\
&= (\lambda_6-\lambda_2) \left(u_x -\frac{\lambda_1}{\lambda_6-\lambda_2}u\right)^2 (q(x_0,t),t)+\lambda_5 u^2(q(x_0,t),t)\\
&\quad
-\partial_xG\ast(\lambda_3 u^2 + \lambda_4 u_x^2) -   G \ast (\lambda_5 u^2 + \lambda_6 u_x^2)\\
&\quad+\frac{\lambda_1}{\lambda_6-\lambda_2}G\ast(\lambda_3 u^2 + \lambda_4 u_x^2) + \frac{\lambda_1}{\lambda_6-\lambda_2}\partial_x \left(  G \ast (\lambda_5 u^2 + \lambda_6 u_x^2) \right).
\end{aligned}
\end{equation*}
If $\lambda_6<\lambda_2$, 
\begin{equation*}
\begin{aligned}
&\quad\left(u_x -\frac{\lambda_1}{\lambda_6-\lambda_2}u\right)_t(q(x_0,t),t) \\
&\leq (\lambda_6-\lambda_2) \left(u_x -\frac{\lambda_1}{\lambda_6-\lambda_2}u\right)^2 (q(x_0,t),t)
+\frac{|\lambda_5|}{2}\|u_0\|_{H^1}^2\\
&\quad+\frac{1}{2}\left(|\lambda_3|+|\lambda_5|
+\left|\frac{\lambda_1\lambda_3}{\lambda_6-\lambda_2}\right|
+\left|\frac{\lambda_1\lambda_5}{\lambda_6-\lambda_2}\right|\right)\|u\|_{L^2}\\
&\quad+\frac{1}{2}\left(|\lambda_4|+|\lambda_6|
+\left|\frac{\lambda_1\lambda_4}{\lambda_6-\lambda_2}\right|
+\left|\frac{\lambda_1\lambda_6}{\lambda_6-\lambda_2}\right|\right)\|u_x\|_{L^2}\\
&\leq (\lambda_6-\lambda_2) \left(u_x -\frac{\lambda_1}{\lambda_6-\lambda_2}u\right)^2 (q(x_0,t),t)
+\left(\frac{|\lambda_5|}{2}+\max\{C_1,C_2\}\right)\|u_0\|_{H^1}^2\\
&\leq (\lambda_6-\lambda_2) \left(u_x -\frac{\lambda_1}{\lambda_6-\lambda_2}u\right)^2 (q(x_0,t),t)
+K_0,
\end{aligned}
\end{equation*}
where $K_0,C_1,C_2$ are defined in \eqref{KCC}.

By applying Lemma \ref{bls2} and \eqref{BLC}, we have 
\begin{equation*}
\lim_{t\rightarrow T_1}\left(u_x -\frac{\lambda_1}{\lambda_6-\lambda_2}u\right)(q(x_0,t),t)=-\infty,
\end{equation*}
with 
\begin{equation*}
T_1\leq \frac{1}{2\sqrt{(\lambda_2-\lambda_6)K_0}}
\log\left( \frac{\left(u_{0x} -\frac{\lambda_1}{\lambda_6-\lambda_2}u_0\right)(x_0)
-\sqrt{\frac{K_0}{\lambda_2-\lambda_6}}}
{\left(u_{0x} -\frac{\lambda_1}{\lambda_6-\lambda_2}u_{0}\right)(x_0)
+\sqrt{\frac{K_0}{\lambda_2-\lambda_6}}}\right).
\end{equation*}
If $\lambda_6>\lambda_2$, 
\begin{equation*}
\begin{aligned}
&\quad\left(u_x -\frac{\lambda_1}{\lambda_6-\lambda_2}u\right)_t(q(x_0,t),t) \\
&\geq (\lambda_6-\lambda_2) \left(u_x -\frac{\lambda_1}{\lambda_6-\lambda_2}u\right)^2 (q(x_0,t),t)
-\frac{|\lambda_5|}{2}\|u_0\|_{H^1}^2\\
&\quad-\frac{1}{2}\left(|\lambda_3|+|\lambda_5|
+\left|\frac{\lambda_1\lambda_3}{\lambda_6-\lambda_2}\right|
+\left|\frac{\lambda_1\lambda_5}{\lambda_6-\lambda_2}\right|\right)\|u\|_{L^2}\\
&\quad-\frac{1}{2}\left(|\lambda_4|+|\lambda_6|
+\left|\frac{\lambda_1\lambda_4}{\lambda_6-\lambda_2}\right|
+\left|\frac{\lambda_1\lambda_6}{\lambda_6-\lambda_2}\right|\right)\|u_x\|_{L^2}\\
&\geq (\lambda_6-\lambda_2) \left(u_x -\frac{\lambda_1}{\lambda_6-\lambda_2}u\right)^2 (q(x_0,t),t)
-\left(\frac{|\lambda_5|}{2}+\max\{C_1,C_2\}\right)\|u_0\|_{H^1}^2\\
&\geq (\lambda_6-\lambda_2) \left(u_x -\frac{\lambda_1}{\lambda_6-\lambda_2}u\right)^2 (q(x_0,t),t)
-K_0.
\end{aligned}
\end{equation*}
By applying Lemma \ref{bls1} and \eqref{BLC}, we have 
\begin{equation*}
\lim_{t\rightarrow T_2}\left(u_x -\frac{\lambda_1}{\lambda_6-\lambda_2}u\right)(q(x_0,t),t)=\infty,
\end{equation*}
with 

\begin{equation*}
T_2\leq \frac{1}{2\sqrt{(\lambda_6-\lambda_2)K_0}}
\log\left( \frac{\left(u_{0x} -\frac{\lambda_1}{\lambda_6-\lambda_2}u_0\right)(x_0)
+\sqrt{\frac{K_0}{\lambda_6-\lambda_2}}}
{\left(u_{0x} -\frac{\lambda_1}{\lambda_6-\lambda_2}u_{0}\right)(x_0)
-\sqrt{\frac{K_0}{\lambda_6-\lambda_2}}}\right).
\end{equation*}
This completes the proof of Theorem \ref{WBP}.

\end{proof}

\subsection{Ill-posedness in Besov space $B_{2,\infty}^{3/2}$}
In this subsection, we investigate the ill-posedness in Besov space $B_{2,\infty}^{3/2}$.
\begin{theorem}\label{IPB}
If the coefficients in \eqref{Cauchyp} satisfy the following relations:
\begin{equation*}\label{constraintsforill}
\lambda_3 = \lambda_1,\quad \lambda_4 = 2\lambda_1,\quad \lambda_5 = \frac32\lambda_2 - \lambda_6,
\quad \lambda_2\neq0,
\end{equation*}
then the Cauchy problem \eqref{Cauchyp} is ill-posedness for $u(0)\in{B_{2,\infty}^{3/2}}$. More precisely, there
is a solution $u\in B_{2,\infty}^{3/2}$ of \eqref{Cauchyp} with the initial data $u(0)$ such that for any $T$, $\varepsilon>0$, there exists a solution $v\in L^\infty([0,T];{B_{2,\infty}^{3/2}})$ with
\begin{eqnarray*}
\|v(0)-u(0)\|_{B_{2,\infty}^{3/2}}\leq \varepsilon \quad \emph{and} \quad \|v(t)-u(t)\|_{L([0,T];B_{2,\infty}^{3/2})}\geq \frac{1}{2\lambda_1^2}.
\end{eqnarray*}
\end{theorem}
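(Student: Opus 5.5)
The plan is to use the explicit non-traveling single peakon \eqref{NTWS} of Example \ref{1peakoncorollary}. This is the standard ill-posedness mechanism for CH-type equations in $B_{2,\infty}^{3/2}$: two peakons whose crests separate at a rate proportional to a small parameter stay far apart in $B_{2,\infty}^{3/2}$. The reason is that $e^{-|x|}$ lies in $B_{2,\infty}^{3/2}$ but not in $B_{2,r}^{3/2}$ for $r<\infty$.

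Under the constraints \eqref{constraintsforill}, Example \ref{1peakoncorollary} shows that $u(x,t)=p(t)e^{-|x-q(t)|}$ solves \eqref{Cauchyp} weakly whenever $p'=-2\lambda_1p^2$ and $q'=\lambda_2 p$.

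\emph{Choice of the two solutions.} Take a reference solution $u$ with $p(0)=p_0$ and $q(0)=0$. Take a perturbed solution $v$ with $p(0)=p_0+\delta$ and $q(0)=0$. This gives $\|v(0)-u(0)\|_{B_{2,\infty}^{3/2}}=|\delta|\,\|e^{-|x|}\|_{B_{2,\infty}^{3/2}}\le\varepsilon$ for $\delta$ small.

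\emph{Choice of sign.} The solutions are $p(t)=p_0/(1+2\lambda_1p_0t)$ and $q(t)=\frac{\lambda_2}{2\lambda_1}\ln(1+2\lambda_1p_0 t)$, and similarly for $v$. I would choose the sign of $p_0$ with $\lambda_1p_0>0$, so that both solutions exist for all $t\ge0$ and lie in $L^\infty([0,T];B_{2,\infty}^{3/2})$.

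\emph{Separation of the crests.} The positions differ by $q_v(t)-q_u(t)=\frac{\lambda_2}{2\lambda_1}\ln\frac{1+2\lambda_1(p_0+\delta)t}{1+2\lambda_1p_0t}$. This is nonzero for every $t>0$ because $\lambda_2\neq0$, and it is of order $\lambda_2\delta t$ for small $\delta t$.

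\emph{Main step: a uniform lower bound.} I need a bound of the form
\[
\|a e^{-|x-x_1|}-b e^{-|x-x_2|}\|_{B_{2,\infty}^{3/2}}\ \ge\ c\,\max(|a|,|b|)\quad\text{whenever } x_1\neq x_2,
\]
where $c$ is independent of how small $|x_1-x_2|$ is. To get it, compute the Fourier transform $\widehat{e^{-|x|}}(\xi)=\frac{2}{1+\xi^2}$. The difference then has transform $\frac{2}{1+\xi^2}\bigl(a e^{-ix_1\xi}-b e^{-ix_2\xi}\bigr)$, and on the dyadic block $|\xi|\sim 2^j$ we have
\[
2^{3j}\|\Delta_j(\cdot)\|_{L^2}^2\approx 2^{3j}\int_{|\xi|\sim2^j}\frac{|a-b e^{i(x_1-x_2)\xi}|^2}{\xi^4}\,d\xi .
\]
Choose $j$ large so that $2^j|x_1-x_2|\gg1$. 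The oscillating factor then averages, giving $|a-be^{i h\xi}|^2\approx a^2+b^2$ on average over the block. This yields a lower bound of order $\sqrt{a^2+b^2}$, independent of $j$, so the supremum over $j$ defining the $B_{2,\infty}^{3/2}$ norm stays bounded below. This averaging step is the main obstacle: it needs a careful estimate of the cross term $\int_{|\xi|\sim2^j}\cos(h\xi)\xi^{-4}\,d\xi$, which decays like $(2^jh)^{-1}2^{-3j}$ by integration by parts.

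\emph{Conclusion.} Apply the bound at any fixed $t\in(0,T]$ with $a=p_v(t)$ and $b=p_u(t)$. Then $\|v(t)-u(t)\|_{B_{2,\infty}^{3/2}}\gtrsim|p_u(t)|$ for every $t>0$, and this is bounded below by a fixed constant independent of $\delta$ and $\varepsilon$. The theorem's constant $\frac1{2\lambda_1^2}$ comes out once $p_0$ is normalized, for instance by taking $p_0$ of size comparable to $\lambda_1^{-2}$ and choosing $T$ small so that $p_u(t)\ge p_0/2$ on $[0,T]$. Taking the supremum (equivalently, the $L^\infty$ norm) in time gives the stated estimate.
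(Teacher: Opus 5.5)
Your proof is correct. It uses the same mechanism as the paper: two explicit non-traveling peakons from \eqref{1peakonODE} with nearby amplitudes, whose crests drift apart because $\lambda_2\neq0$, detected on high dyadic blocks of $B^{3/2}_{2,\infty}$. The two technical choices differ, however.

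The paper uses $u_c$ with $\lambda_1 c>0$, i.e.\ initial amplitude $-c$ with $\lambda_1 p_0<0$, so the amplitude grows and blows up at $t=1/(2\lambda_1 c)$. It takes $c_1=\frac{1}{2\lambda_1(1+T)}$ and a $c_2$ depending on $T$ and on a block index $q$. This places blow-up after $T$, makes the amplitudes at time $T$ about $\frac{1}{2|\lambda_1|}$, and makes the crest separation at time $T$ exactly $2^{-q}\pi$. On the single block $2^q\le|\xi|\le2^{q+1}$ the phase then sweeps $[\pi,2\pi]$, and the cross term $\int\cos(2^{-q}\pi\xi)(1+\xi^2)^{-1/2}\,d\xi$ is nonpositive because the weight is decreasing, so no limit in $q$ is needed.

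You instead take decaying, globally defined solutions ($\lambda_1p_0>0$). You prove a lower bound valid for every nonzero separation by letting $j\to\infty$ and removing the cross term by integration by parts. The block quantity then tends to a fixed multiple of $a^2+b^2$, so your $c$ is indeed independent of $|x_1-x_2|$.

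What your route buys:
\begin{itemize}
\item The reference solution $u$ is independent of $T$. The paper's $u_{c_1}$ depends on $T$, so only your version respects the order of quantifiers in the statement.
\item There is no blow-up to steer around.
\item You get a lower bound at every $t\in(0,T]$, not only at $t=T$.
\item Your lemma also handles $\lambda_1=0$ (traveling peakons, separation $\lambda_2\delta t$), where the paper simply defers to Danchin.
\end{itemize}
What the paper's route buys is a completely explicit one-block computation.

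One wording fix: $T$ is given in the statement, so you may not shrink it. Since the norm is a supremum in time, evaluate instead at some $t\in(0,T]$ small enough that $|p_u(t)|\ge|p_0|/2$. Then take $|p_0|\ge 1/(c\lambda_1^2)$ with $\operatorname{sgn}p_0=\operatorname{sgn}\lambda_1$ to reach $\frac{1}{2\lambda_1^2}$.
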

\begin{proof}
The proof of Theorem \ref{IPB} is divided into two cases: \(\lambda_1 = 0\) and \(\lambda_1 \neq 0\).

For the case \(\lambda_1 = 0\), \(\lambda_2 \neq0\), the Cauchy problem \eqref{Cauchyp} admits the single peakon solution as
\begin{equation*}
u(x,t) = c \mathrm{e}^{-|x - \lambda_2ct|},
\end{equation*}
where \(c\) is an arbitrary constant.
Since we can follow the work step by step in \cite{Dan2003}, we omit the details. 

Our main task is to discuss the second case \(\lambda_1\neq 0\), \(\lambda_2 \neq0\).  Recall the non-traveling wave solutions \eqref{NTWS},
\begin{eqnarray}\label{NTWS1}
u(x,t) = \frac{1}{2\lambda_1 t - A}\, \mathrm{exp}\left(-\left| x - \frac{\lambda_2}{2\lambda_1} \ln|2\lambda_1 t - A|- B \right|\right),
\end{eqnarray}
where \( A \) and \( B \) are arbitrary constants.
Let $A=\frac{1}{c}$ and $B=\frac{\lambda_2}{2\lambda_1}\ln|c|$ with $c\neq0$ in \eqref{NTWS1}, we define
\begin{equation*}
u_c(x,t):=\frac{c}{2\lambda_1ct-1}\mathrm{e}^{-\left|x-\frac{\lambda_2}{2\lambda1}\ln|2\lambda_1ct-1|\right|}, ~~c\neq0,~~ t\geq0.
\end{equation*}
Its Fourier transform in $x$ is
\begin{equation*}
\mathscr{F}u_c(\xi,t):=\frac{2c}{2\lambda_1ct-1}\frac{\mathrm{e}^{-i\xi \frac{\lambda_2}{2\lambda_1}\ln|2\lambda_1ct-1|}}{1+\xi^2}.
\end{equation*}
When $p=2$, and $r\in[0,+\infty]$, the Definition \ref{besov} is equivalent to
\begin{eqnarray*}
\|u\|_{B_{2,r}^s}=\left\{
  \begin{array}{ll}
  \left[\left(\int_{-1}^1(1+\xi^2)^s|\hat{u}(\xi)|^2d\xi\right)^{\frac{r}{2}}
+\sum\limits_{q\in\mathbb{N}}\left(\int_{2^q\leq|\xi|\leq2^{q+1}}(1+\xi^2)^s
|\hat{u}(\xi)|^2d\xi\right)^{\frac{r}{2}}\right]^{\frac{1}{r}}, & r\leq\infty, \\
   \max\left\{\left(\int_{-1}^1(1+\xi^2)^s|\hat{u}(\xi)|^2d\xi\right)^{\frac{1}{2}}, \, \sup\limits_{q\in\mathbb{N}} \left(\int_{2^q\leq|\xi|\leq2^{q+1}}(1+\xi^2)^s
|\hat{u}(\xi)|^2d\xi\right)^{\frac{1}{2}}\right\}, & r=\infty.
  \end{array}
\right.
\end{eqnarray*}
For the initial data, we compute $\|u_{c_2}(0)-u_{c_1}(0)\|_{B_{2,\infty}^{3/2}}$ according to the above Definition.
\begin{eqnarray}
\begin{aligned}\label{uc0}
\|u_{c_2}(0)-u_{c_1}(0)\|_{B_{2,\infty}^{3/2}}^2&=8(c_2-c_1)^2\max\left\{\int_{0}^1\frac{1}{\sqrt{1+\xi^2}}d\xi
,\sup_{q\in\mathbb{N}} \int_{2^q}^{2^{q+1}}\frac{1}{\sqrt{1+\xi^2}}d\xi\right\}\\
&=8(c_2-c_1)^2\ln(1+\sqrt{2}).
\end{aligned}
\end{eqnarray}
By direct calculation, we have
\begin{eqnarray}
\begin{aligned}\label{FT2}
&|\hat{u}_{c_2}(t)-\hat{u}_{c_1}(t)|^2=4\left|\frac{c_2}{2\lambda_1c_2t-1}e^{-i\xi\frac{\lambda_2}{2\lambda_1} \ln|2\lambda_1c_2t-1|}-\frac{c_1}{2\lambda_1c_1t-1}e^{-i\xi\frac{\lambda_2}{2\lambda_1} \ln|2\lambda_1c_1t-1|}\right|^2\\ 
=&4\left(\frac{c_2}{2\lambda_1c_2t-1}e^{-i\xi \frac{\lambda_2}{2\lambda_1}\ln|2\lambda_1c_2t-1|}
-\frac{c_1}{2\lambda_1c_1t-1}e^{-i\xi \frac{\lambda_2}{2\lambda_1}\ln|2\lambda_1c_1t-1|}\right)\\
&\times\left(\frac{c_2}{2\lambda_1c_2t-1}e^{i\xi\frac{\lambda_2}{2\lambda_1} \ln|2\lambda_1c_2t-1|}
-\frac{c_1}{2\lambda_1c_1t-1}e^{i\xi\frac{\lambda_2}{2\lambda_1} \ln|2\lambda_1c_1t-1|}\right)
\\=&4\left(\frac{c_2}{2\lambda_1c_2t-1}-\frac{c_1}{2\lambda_1c_1t-1}\right)^2\\
&+\frac{8c_1c_2}{(2\lambda_1c_2t-1)(2\lambda_1c_1t-1)}
\left(1-\cos\left((\ln|2\lambda_1c_2t-1|-\ln|2\lambda_1c_1t-1|)\left|\frac{\lambda_2}{2\lambda_1}\right|\xi\right)\right),
\end{aligned}
\end{eqnarray}
For fixed $T>0$, we choose $c_1=\frac{1}{2\lambda_1(1+T)}$, $c_2=\frac{1+T-e^{2^{(-q+1)}\left|\tfrac{\lambda_1}{\lambda_2}\right|\pi}}{2\lambda_1(1+T)T}$ and $q$ large enough, 
then $c_1c_2>0$ and
\begin{eqnarray}\label{CL}
	\ln|2\lambda_1c_2T-1|-\ln|2\lambda_1c_1T-1|=2^{(-q+1)}\left|\tfrac{\lambda_1}{\lambda_2}\right|\pi.
\end{eqnarray}
Substituting \eqref{FT2} and \eqref{CL} into the above definition of $B_{2,\infty}^{3/2}$, we have
\begin{eqnarray*}
\|u_{c_2}(T)-u_{c_1}(T)\|_{B_{2,\infty}^{3/2}}&\geq& \frac{16c_1c_2}{(2\lambda_1c_2T-1)(2\lambda_1c_1T-1)}\int_{2^q}^{2^{q+1}}\frac{1-\cos(2^{-q}\pi\xi)}{\sqrt{1+|\xi|^2}}d\xi\\
&\geq& \frac{4c_1c_2}{\sqrt{2}(2\lambda_1c_2T-1)(2\lambda_1c_1T-1)},
\end{eqnarray*}
where we have used
\begin{eqnarray*}
\int_{2^q}^{2^{q+1}}\frac{\cos(2^{-q}\pi\xi)}{\sqrt{1+|\xi|^2}}d\xi\leq 0,
\end{eqnarray*}
and
\begin{eqnarray*}
\int_{2^q}^{2^{q+1}}\frac{1}{\sqrt{1+|\xi|^2}}d\xi \geq \frac{2^q}{\sqrt{1+2^{2(q+1)}}} \geq\frac{1}{4\sqrt{2}}.
\end{eqnarray*}
From \eqref{uc0}, we have
\begin{eqnarray*}
\|u_{c_2}(0)-u_{c_1}(0)\|_{B_{2,\infty}^{3/2}}&=&8\left(\frac{1+T-e^{2^{(-q+1)}\left|\tfrac{\lambda_1}{\lambda_2}\right|\pi}}{2\lambda_1(1+T)T}-\frac{1}{2\lambda_1(1+T)}\right)^2\ln(1+\sqrt{2})\\
&=&8\left(\frac{1-e^{2^{(-q+1)}\left|\tfrac{\lambda_1}{\lambda_2}\right|\pi}}{2\lambda_1(1+T)T}\right)^2\ln (1+\sqrt{2}),
\end{eqnarray*}
which implies $\|u_{c_2}(0)-u_{c_1}(0)\|_{B_{2,\infty}^{3/2}}$ may be arbitrary small for $q$ large enough.
Note that for $q$ large enough,
\begin{eqnarray*}
(1+T)\left(1-e^{2^{(-q+1)}\left|\tfrac{\lambda_1}{\lambda_2}\right|\pi}\right)>0,
\end{eqnarray*}
Hence
\begin{eqnarray*}
\frac{1+T-e^{2^{(-q+1)}\left|\tfrac{\lambda_1}{\lambda_2}\right|\pi}}{Te^{2^{(-q+1)}\left|\tfrac{\lambda_1}{\lambda_2}\right|\pi}}>1.
\end{eqnarray*}
Checking that
\begin{eqnarray*}
\frac{c_1}{1-2\lambda_1c_1T}=\frac{1}{2\lambda_1},\quad  \frac{c_2}{1-2\lambda_1c_2T}=\frac{1}{2\lambda_1}\frac{1+T-e^{2^{(-q+1)}\left|\tfrac{\lambda_1}{\lambda_2}\right|\pi}}{Te^{2^{(-q+1)}\left|\tfrac{\lambda_1}{\lambda_2}\right|\pi}}.
\end{eqnarray*}
Therefore, we have
\begin{eqnarray*}
\|u_{c_2}(T)-u_{c_1}(T)\|_{B_{2,\infty}^{3/2}}\geq\frac{1}{2\lambda_1^2}.
\end{eqnarray*}
This completes the proof of Theorem \ref{IPB}.
\end{proof}

\section{Conclusions}

We have derived peakon solutions for a class of Camassa--Holm-type equations with quadratic nonlinearities, which indicates that the convolutional approach is feasible and may be extended to higher-order nonlinear equations. In particular, it is expected to be applicable to more general models, such as the generalized modified Camassa--Holm equation (gmCH) \cite{gao2021global}, for which only existence results are currently available. Non-traveling
wave solutions play a pivotal role in investigating the ill-posedness of Camassa-Holm-type equations with quadratic nonlinearities. Nevertheless, owing to several critical factors—most notably the absence of conserved quantities—the stability analysis \cite{gui2025stability} of non-traveling solitray wave solutions remains highly challenging in three dimensions. Compared with our previous work \cite{zhu2023rogue}, the wave breaking results established in the present paper have been substantially enhanced. 
Some symbiotic bright solitary wave solutions \cite{lin2006symbiotic} might be caught for typical nonlinear dynamical systems, however, additional restrictions are still imposed on the coefficients. Due to intrinsic structural limitations of the equation itself, addressing the wave breaking problem under more general coefficient conditions entails significantly greater technical difficulties.

\end{document}